\documentclass[11pt]{article}

\usepackage[T1]{fontenc}
\usepackage{lmodern}
\usepackage{amsmath,amssymb,amsthm,mathtools}
\usepackage{enumitem}
\usepackage{xcolor}
\usepackage{tikz-cd}
\usepackage{hyperref}

\usepackage{a4wide}

\newcommand{\Q}{\mathbb{Q}}
\newcommand{\Z}{\mathbb{Z}}
\newcommand{\N}{\mathbb{N}}
\newcommand{\R}{\mathbb{R}}
\newcommand{\C}{\mathbb{C}}
\newcommand{\F}{\mathbb{F}}
\newcommand{\A}{\mathbb{A}}
\newcommand{\G}{\mathbb{G}}
\newcommand{\bbP}{\mathbb{P}}
\newcommand{\bbS}{\mathbb{S}}
\newcommand{\OK}{\mathcal{O}_K}
\newcommand{\Oring}{\mathcal{O}}
\newcommand{\Gal}{\operatorname{Gal}}
\newcommand{\GL}{\operatorname{GL}}
\newcommand{\SL}{\operatorname{SL}}
\newcommand{\Aut}{\operatorname{Aut}}
\newcommand{\End}{\operatorname{End}}
\newcommand{\Hom}{\operatorname{Hom}}
\renewcommand{\Im}{\operatorname{Im}}
\newcommand{\card}[1]{\# #1}

\providecommand{\nomenclature}[2]{}
\providecommand{\index}[1]{}

\newtheorem{theorem}{Theorem}[section]
\newtheorem{proposition}[theorem]{Proposition}
\newtheorem{lemma}[theorem]{Lemma}
\newtheorem{corollary}[theorem]{Corollary}
\theoremstyle{definition}
\newtheorem{definition}[theorem]{Definition}
\newtheorem{example}[theorem]{Example}
\newtheorem{notation}[theorem]{Notation}
\newtheorem{conjecture}[theorem]{Conjecture}
\theoremstyle{remark}
\newtheorem{remark}[theorem]{Remark}

\newcommand{\GSp}{\operatorname{GSp}}
\newcommand{\Spec}{\operatorname{Spec}}
\newcommand{\Pic}{\operatorname{Pic}}
\newcommand{\Div}{\operatorname{Div}}
\newcommand{\Princ}{\operatorname{Princ}}
\newcommand{\abGal}[1]{\Gal\big(\overline{#1}/#1 \big)}
\newcommand{\Jac}{\operatorname{Jac}}

\title{An introduction to algebraic groups}

\author{Davide Lombardo}
\date{}

\begin{document}
\maketitle

\begin{abstract}
We review the theory of abelian varieties and, more generally, algebraic groups, with a view toward arithmetic applications. We discuss their models over the rings of integers of number fields, their reductions modulo primes, their torsion points (together with their natural Galois action), and the interplay of these notions through characteristic polynomials of Frobenius.
\end{abstract}

\section{Basic notation and conventions}
We reserve the letter $K$ to denote fields. When $K$ is a number field (that is, a finite extension of the field $\Q$ of rational numbers) or a local field, we denote by $\OK$ its ring of integers. The letters $p$ and $\ell$ will denote prime numbers, while the symbol $v$ will denote a non-zero prime ideal in the ring of integers of some number field (sometimes called a \emph{finite place}). When $K$ is a number field and $v$ is a finite place of $K$, the completion of $K$ at $v$ is denoted by $K_v$. The completion of $\OK$ at $v$ is the ring of integers of $K_v$; it will be denoted by $\Oring_{K_v}$, with residue field $\F_v$.
The symbol $C$ will usually denote a curve with associated Jacobian variety $J$ (to be defined later). 
A \emph{variety} over a field is a scheme of finite type over that field, separated and geometrically integral (i.e., reduced and irreducible after base-change to an algebraic closure). We assume in particular that varieties are geometrically connected. A \emph{nice curve} is a smooth projective variety of dimension 1 (in particular, geometrically connected). Given a scheme $X$ over a field $K$ and a field extension $K \subseteq L$, we denote by $X_L$ the base-change $X \times_{\operatorname{Spec} K} \operatorname{Spec} L$.

\section{Group schemes}\label{sect: group schemes}
The language of group schemes is essential in developing some of the more advanced parts of the arithmetic theory of abelian varieties and, more generally, algebraic groups. Indeed, schemes enter naturally into the picture when we try to make sense of operations like reducing points modulo primes: while for elliptic curves it is possible to argue entirely with equations, without ever mentioning schemes, it seems that when dealing with higher-dimensional abelian varieties the schematic language becomes indispensable. 
For this reason, we will spend some time introducing the general notion of group schemes, starting with the following definition.

\begin{definition}
Let $S$ be a scheme. A \emph{group scheme} over $S$ is an $S$-scheme $\pi \colon \mathcal{G} \to S$ together with three morphisms
\[
m\colon \mathcal{G} \times_{S} \mathcal{G} \to \mathcal{G}, \quad i \colon \mathcal{G} \to \mathcal{G}, \quad e\colon S \to \mathcal{G},
\]
called respectively the \emph{multiplication}, \emph{inverse}, and \emph{unit} maps. These maps fit into commutative diagrams that ensure that, for any $S$-scheme $T$, the maps 
\[
m_T \colon \mathcal{G}(T) \times \mathcal{G}(T) \to \mathcal{G}(T) \quad \text{and}\quad i_T \colon \mathcal{G}(T) \to \mathcal{G}(T)
\]
endow $\mathcal{G}(T)$ with the structure of a group with identity element given by $(T \to S \xrightarrow{e} \mathcal{G}) \in \mathcal{G}(T)$.
Explicitly, associativity of $m$ is expressed by the commutativity of the diagram
\begin{center}
\begin{tikzcd}[row sep=large, column sep=huge]
\mathcal{G} \times_S \mathcal{G} \times_S \mathcal{G} \arrow[r, "{(m,\operatorname{id})}"] \arrow[d, "{(\operatorname{id},m)}"'] 
& \mathcal{G} \times_S \mathcal{G} \arrow[d, "m"] \\
\mathcal{G} \times_S \mathcal{G} \arrow[r, "m"'] 
& \mathcal{G}
\end{tikzcd}
\end{center}
The following two diagrams express that $e$ is a left and right unit (the isomorphisms $S \times_S \mathcal{G} \to \mathcal{G}$ and $\mathcal{G} \times_S S \to \mathcal{G}$ are the canonical ones):
\begin{center}
    \begin{tikzcd}[column sep=large]
S \times_S \mathcal{G} \arrow[r, "{(e,\operatorname{id})}"] \arrow[dr, "\cong"'] 
& \mathcal{G}\times_S\mathcal{G} \arrow[d, "m"] \\
& \mathcal{G}
\end{tikzcd}\hspace{2cm}\begin{tikzcd}[column sep=large]
\mathcal{G} \times_S S \arrow[r, "{(\operatorname{id},e)}"] \arrow[dr, "\cong"'] 
& \mathcal{G}\times_S\mathcal{G} \arrow[d, "m"] \\
& \mathcal{G}
\end{tikzcd}
\end{center}
Finally, the fact that $i$ gives left and right inverses is encoded by
\begin{center}
\begin{tikzcd}[column sep=large]
\mathcal{G} \arrow[r, "{(i,\operatorname{id})}"] \arrow[dr, "e\circ\pi"'] 
& \mathcal{G}\times_S\mathcal{G} \arrow[d, "m"] \\
& \mathcal{G}
\end{tikzcd}
\hspace{2cm}
\begin{tikzcd}[column sep=large]
\mathcal{G} \arrow[r, "{(\operatorname{id}, i)}"] \arrow[dr, "e\circ\pi"'] 
& \mathcal{G}\times_S\mathcal{G} \arrow[d, "m"] \\
& \mathcal{G}
\end{tikzcd}
\end{center}

\end{definition}

There is also an obvious notion of homomorphism of group schemes: if $\mathcal{G}_1, \mathcal{G}_2$ are two group schemes over $S$, with group structure given by the maps $(m_1, i_1, e_1)$ and $(m_2, i_2, e_2)$ respectively, an $S$-scheme morphism $f \colon \mathcal{G}_1 \to \mathcal{G}_2$ is called a \emph{homomorphism} if it satisfies
\[
f \circ e_1 = e_2, \quad f \circ m_1=m_2 \circ (f \times_S f), \quad f\circ i_1 = i_2 \circ f.
\]
These conditions imply that, for any $S$-scheme $T$, the induced map $f \colon \mathcal{G}_1(T) \to \mathcal{G}_2(T)$ is a group homomorphism.

Finally, if $f \colon  \mathcal{G}_1 \to \mathcal{G}_2$ is a homomorphism of group schemes over $S$, then one defines the kernel $\ker f$ in the obvious way, namely as the fibre product in the diagram below:
\[
\begin{tikzcd}
\ker f \arrow[r] \arrow[d] & \mathcal{G}_1 \arrow[d, "f"'] \\
S \arrow[r, "e_2"'] & \mathcal{G}_2
\end{tikzcd}
\]
The $S$-scheme $\ker f$ is then a subgroup scheme of $\mathcal{G}_1$ (with the obvious definition: a subscheme which inherits a structure of $S$-group scheme when endowed with the suitable restrictions of the maps $m_1, i_1, e_1$). 

\medskip

We now make several remarks on these definitions.

\begin{remark}
More informally, when $S$ is the spectrum of a field $K$, a group scheme over $S$ (also called a $K$-group scheme) is a $K$-scheme $G$ whose $\overline{K}$-points form a group, and such that:
\begin{enumerate}
\item the identity element of $G(\overline{K})$ is $K$-rational;
\item the functions $m\colon G(\overline{K}) \times G(\overline{K}) \to G(\overline{K})$ and $i \colon G(\overline{K}) \to G(\overline{K})$ that give the multiplication and inverse in the group are induced by algebraic morphisms $G \times G \to G$ and $G \to G$ defined over $K$.
\end{enumerate}

Group schemes of finite type over a field are often simply called \emph{algebraic groups}.
\end{remark}

\begin{remark}
Roughly speaking, group schemes over a general base $S$ can be thought of as families of algebraic groups (in the sense of the previous remark) parametrised by the points of $S$.
\end{remark}

The following definitions will also be useful.
\begin{definition}\phantom{}
\begin{itemize}
    \item A group scheme $\mathcal{G}$ over the base scheme $S$ is called \emph{finite} (resp.~\emph{finite étale}) if the structure morphism $\mathcal{G} \to S$ is finite (resp.~finite étale).
    \item Given two $S$-group schemes $\mathcal{G}_1, \mathcal{G}_2$, we sometimes denote by $\operatorname{Hom}_S(\mathcal{G}_1, \mathcal{G}_2)$ the set of homomorphisms of $S$-group schemes $\mathcal{G}_1 \to \mathcal{G}_2$. In particular, if $S=\operatorname{Spec} K$, we write $\operatorname{Hom}_K(\mathcal{G}_1, \mathcal{G}_2)$ for $\operatorname{Hom}_S(\mathcal{G}_1, \mathcal{G}_2)$.
    \item Let $K \subseteq L$ be a field extension and $\mathcal{G}$ be a $K$-group scheme, with group structure given by the morphisms $(m, i, e)$. The $L$-scheme $\mathcal{G}_L  \coloneq  \mathcal{G} \times_{\operatorname{Spec} K} \operatorname{Spec} L$ has a natural structure of $L$-group scheme, given by the three morphisms $m_L, i_L, e_L$ obtained as the base-change to $L$ of $m, i, e$ respectively. For simplicity of notation, when $\mathcal{G}_1, \mathcal{G}_2$ are two $K$-schemes and $K \subseteq L$ is a field extension, we shall sometimes write $\operatorname{Hom}_L(\mathcal{G}_1, \mathcal{G}_2)$ for $\operatorname{Hom}_L( (\mathcal{G}_1)_L, (\mathcal{G}_2)_L)$.
\end{itemize}
\end{definition}

\begin{remark}\label{rmk: group schemes are functors}
    One could take a more categorical point of view and define group schemes as group objects in the category of schemes. We will not adopt this approach. Nonetheless, it is useful to observe that a group scheme $\mathcal{G}$ over a scheme $S$ gives a contravariant functor from the category of $S$-schemes to the category of groups. In particular, whenever we have a map of $S$-schemes $f \colon T_1 \to T_2$, we have an induced group homomorphism $f \colon \mathcal{G}(T_2) \to \mathcal{G}(T_1)$, given by
    \[
    \begin{array}{cccc}
    f \colon &\mathcal{G}(T_2) & \to & \mathcal{G}(T_1) \\
    & (T_2 \xrightarrow{x} \mathcal{G}) &  \mapsto & (T_1 \xrightarrow{f} T_2 \xrightarrow{x} \mathcal{G}).
    \end{array}
    \]

\end{remark}

\begin{remark}\label{rmk: etale group schemes}
Let $K$ be a field of characteristic zero and $S=\Spec K$. Finite group schemes over $S$ can be described in very concrete terms as follows. Given a finite group scheme $\mathcal{G}$ we have a finite group $G \coloneq  \mathcal{G}(\overline{K})$ on which $\Gal(\overline{K}/K)$ acts naturally and continuously. Conversely, given a finite group $G$ with a continuous action of $\Gal(\overline{K}/K)$, there is a unique group scheme $\mathcal{G}$ such that $\mathcal{G}(\overline{K})\cong G$ as groups endowed with a continuous action of $\Gal(\overline{K}/K)$. One can show that these two constructions are inverse to one another, so that finite group schemes over $K$ form a category equivalent to the category of finite groups with a continuous action of $\Gal(\overline{K}/K)$. The same description extends to finite étale group schemes over an arbitrary field $K$, replacing $\overline{K}$ with the separable closure $K^{\operatorname{sep}}$. We note here that a theorem of Cartier \cite[\href{https://stacks.math.columbia.edu/tag/047N}{Tag 047N}]{stacks-project} implies that a finite group scheme over $\operatorname{Spec} K$ is always étale if $K$ is of characteristic $0$. More generally, étale group schemes over $K$ correspond to possibly infinite groups $G$, endowed with the discrete topology and a continuous action of $\Gal(K^{\operatorname{sep}}/K)$. The associated group scheme is of finite type if and only if $G$ is finite. For example, the constant group scheme $\coprod_{m\in\Z}\Spec K$, with group law induced by addition in $\Z$, is étale but not of finite type.
\end{remark}

\begin{example} Here are two fundamental examples. Before describing them, we recall that for a scheme $X$, the ring $H^0(X,\mathcal O_X)$ of global sections is, by definition, the ring of global regular functions on $X$. If $X$ is affine, say $X=\operatorname{Spec} R$, then $H^0(X,\mathcal O_X)=R$. In general, one covers $X$ by affine open subschemes $\{\operatorname{Spec} R_i\}_{i\in I}$, and the elements of $H^0(X,\mathcal O_X)$ are precisely collections of elements of the rings $R_i$ that agree on overlaps, i.e., satisfy the appropriate gluing conditions.
\begin{enumerate}
\item The \emph{multiplicative group} $\G_{m,S}$. We start by defining $\G_{m,\Z}$: as a scheme, it is given by $\G_{m,\Z}=\Spec \Z[t, t^{-1}]$. In order to describe its group structure, we need to specify three morphisms $e, i$ and $m$ as above. The identity section $e$ is the map of affine schemes induced by the map of rings 
\[
\begin{array}{ccc}
\Z[t, t^{-1}] & \to & \Z \\
t & \mapsto & 1.
\end{array}
\]
The inverse $i$ is induced by the map of rings
\[
\begin{array}{ccc}
\Z[t, t^{-1}] & \to & \Z[t, t^{-1}] \\
t & \mapsto & t^{-1}
\end{array}
\]
and the multiplication $m$ is induced by
\[
\begin{array}{ccc}
\Z[t, t^{-1}] & \to & \Z[t_1,t_2,t_1^{-1},t_2^{-1}] \\
t & \mapsto & t_1t_2.
\end{array}
\]
Notice that $\Z[t_1,t_2,t_1^{-1},t_2^{-1}] \cong \Z[t_1, t_1^{-1}] \otimes_{\Z} \Z[t_2,t_2^{-1}]$, and the latter is the coordinate ring of $\G_{m,\Z} \times_{\Spec \Z} \G_{m,\Z}$.
One checks that, for any ($\Z$-)scheme $X$, we have
\[
\begin{aligned}
\G_{m,\Z}(X) & = \Hom_{\operatorname{Sch}}(X, \G_{m,\Z}) \\
& = \Hom_{\operatorname{Ring}}(\Z[t,t^{-1}], H^0(X,\Oring_X))=H^0(X,\Oring_X)^\times.
\end{aligned}
\]
Here the second equality follows from the fact that $\G_{m,\Z}$ is affine, while the third is a consequence of the fact that a ring homomorphism $\varphi \colon \Z[t] \to H^0(X,\Oring_X)$ is uniquely determined by $a=\varphi(t)$, and it extends to $\Z[t,t^{-1}]$ if and only if $a$ is invertible in $H^0(X,\Oring_X)$.

The previous equalities justify the name \emph{multiplicative group}: when evaluated on $X=\Spec(A)$, where $A$ is a ring, we get $\G_{m,\Z}(X)=A^\times$. One may also check that the induced maps $m \colon \G_{m,\Z}(X) \times \G_{m,\Z}(X) \to \G_{m,\Z}(X)$ and $i: \G_{m,\Z}(X) \to \G_{m,\Z}(X)$ are the obvious ones. We make this explicit for the multiplication $m$: given two elements $\varphi_1, \varphi_2 \in \G_{m,\Z}(X)$, we would like to compute their product. Considering the diagram
\begin{center}
\begin{tikzcd}[row sep=large, column sep=huge]
X \arrow[bend right=20]{ddr}[swap]{\varphi_1} \arrow[bend left=20]{drr}{\varphi_2} \arrow{dr}[dashed]{(\varphi_1,\varphi_2)} & & \\
& \G_{m,\Z} \times_{\Spec\Z} \G_{m,\Z} \arrow{r} \arrow{d} \arrow{dr}[swap]{m} 
& \G_{m,\Z} \\[1em]
& \G_{m,\Z} 
& \G_{m,\Z}
\end{tikzcd}
\end{center}
we are therefore interested in the composition $m \circ (\varphi_1, \varphi_2)$. We may assume that $X=\Spec(A)$ is affine, in which case the maps $\varphi_1, \varphi_2$ are determined by $a_1 \coloneq \varphi_1(t) \in A^\times$ and $a_2 \coloneq \varphi_2(t) \in A^\times$. We would like to describe $m \circ (\varphi_1, \varphi_2)$ and check that it corresponds to $a_1a_2 \in A^\times$.
Indeed, we have a corresponding diagram of rings
\begin{center}
    \begin{tikzcd}[row sep=large, column sep=huge]
A & \\[1em]
& \Z[t_1,t_1^{-1}] \otimes_{\Z} \Z[t_2,t_2^{-1}] \arrow[ul, dashed] & \Z[t_2,t_2^{-1}] \arrow[l] \arrow[ull, bend right=20, swap, "t_2 \mapsto a_2"] \\[1em]
& \Z[t_1,t_1^{-1}] \arrow[u] \arrow[uul, bend left=20, "t_1 \mapsto a_1"] & \Z[t,t^{-1}] \arrow[ul, "m"]
\end{tikzcd}
\end{center}
so that the composite map $\Z[t,t^{-1}] \xrightarrow{m} \Z[t_1,t_1^{-1}] \otimes_{\Z} \Z[t_2,t_2^{-1}] \to A$ sends $t \mapsto t_1t_2 \mapsto a_1a_2$ as desired.
The verification for the inverse is similar, and we leave it to the reader. Finally, the element of $\G_{m, \Z}(X)$ induced by $e$ is $X \to \Spec \Z \to \Spec \Z[t, t^{-1}]$, which corresponds to the ring map $\Z[t, t^{-1}] \to A$ that sends $t$ to $1 \in A^\times$.
The upshot of this discussion is that, for every scheme $X$, we may endow the set $\G_{m,\Z}(X)=H^0(X,\Oring_X)^\times$ with the group structure induced by $e, i$ and $m$, and this group structure agrees with the natural multiplicative structure of $H^0(X,\Oring_X)^\times$.

Finally, if $S$ is a general base scheme, the multiplicative group over $S$ is simply $\G_{m,S} = \G_{m,\Z} \times_{\Spec \Z} S$, and for any $S$-scheme $X$ we again have a canonical group isomorphism $\G_{m,S}(X) \cong H^0(X,\Oring_X)^\times$.
\item In a similar manner, we may also define the \emph{additive group $\G_{a,S}$}: again it suffices to define $\G_{a,\Z}$, which as a scheme is simply $\Spec \Z[t]$. A calculation similar to the above shows that for any test scheme $X$ we have $\G_{a,\Z}(X)=H^0(X,\Oring_X)$ (as sets). We may then equip $\G_{a,\Z}$ with the structure of a group scheme by endowing it with the morphisms corresponding to the ring maps
\[
\begin{array}{cccc}
e \colon & \Z[t] & \to & \Z, \\
& t & \mapsto & 0
\end{array} \quad\quad 
\begin{array}{cccc}
i \colon & \Z[t] & \to & \Z[t], \\
& t & \mapsto & -t
\end{array} \quad\quad
\begin{array}{cccc}
m \colon & \Z[t] & \to & \Z[t_1,t_2]. \\
& t & \mapsto & t_1+t_2
\end{array}
\]
Finally, one checks that, given elements $\varphi_1, \varphi_2$ in $\G_{a,\Z}(X)$, corresponding to $a_1, a_2 \in H^0(X,\Oring_X)$, the element $m (\varphi_1,\varphi_2)$ of $\G_{a,\Z}(X)$ corresponds to $a_1+a_2 \in H^0(X, \Oring_X)$, and $i(\varphi_1)$ is the element corresponding to $-a_1$. In other words, for every scheme $X$ we have a canonical group isomorphism $\G_{a,\Z}(X) \cong \left(H^0(X,\Oring_X), + \right)$.
\end{enumerate}
\end{example}

With these preliminaries in place, we move on to the kind of group schemes we will mostly be interested in, namely, abelian varieties.


\section{Abelian varieties: geometry}
Let $K$ be a field. An \emph{abelian variety $A$ over $K$} is a reduced, connected, and projective algebraic group. More explicitly, this means that $A$ is a (projective, connected) $K$-variety endowed with a distinguished rational point $0_A \in A(K)$ and algebraic morphisms $m \colon A \times A \to A$, $i \colon A \to A$ that make $A$ into a group scheme.
Before proceeding, we make some comments on this seemingly innocuous definition, which in fact hides quite a bit of sophisticated mathematics.
\begin{remark}\phantom{~}
\begin{enumerate}
\item In the definition, one may replace \emph{projctive} with \emph{proper}. It is a theorem of Weil that the two definitions are equivalent: a reduced, connected, proper algebraic group is automatically projective (for a proof, see \cite[§7]{MilneAV}). The groups $\G_a$ and $\G_m$ are not abelian varieties: they are smooth and geometrically connected, but neither is proper over $K$.
\item Since abelian varieties are by definition connected and possess a $K$-rational point, they are also geometrically connected \cite[\href{http://stacks.math.columbia.edu/tag/04KV}{Tag 04KV}]{stacks-project}.
\item A theorem of Cartier shows that if $\operatorname{char}(K)=0$ all group schemes over $K$ are automatically reduced \cite[Theorem 3.20]{EvGM}; this is not true in general over fields of positive characteristic. For example, if $\operatorname{char}(K)=p>0$, the map $\G_a\to\G_a$ given by $x\mapsto x^p$ is a group homomorphism; its kernel $\alpha_p=\Spec(K[t]/(t^p))$ is not reduced.
\item All \emph{geometrically reduced} group schemes that are locally of finite type over a field are smooth \cite[Proposition 3.17]{EvGM}. Combined with (1) and (3), this shows that if $\operatorname{char}(K)=0$ one may simply define abelian varieties as connected, proper group schemes of finite type over $K$.
\end{enumerate}
\end{remark}

\begin{lemma}[Rigidity lemma]\label{lemma:Rigidity}
Let $f \colon A \times B \to C$ be a morphism of varieties
over a field $K$. If $A$ is proper and
$f\left( A \times \{b_0\} \right) = f\left( \{a_0\} \times B \right) = \{ c_0 \}$
for certain points $a_0 \in A(K), b_0 \in B(K), c_0 \in C(K)$, then $f(A \times B)=\{c_0\}$.
\end{lemma}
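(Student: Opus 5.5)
The plan is to follow the standard argument (as in Mumford or Milne). I will use the properness of $A$ to push a closed set down to $B$, and the connectedness (irreducibility) of $B$ to conclude.

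First I reduce to an affine target. Choose an affine open neighbourhood $U \subseteq C$ of $c_0$, and set
\[
Z \coloneq \pi_B\bigl(f^{-1}(C \setminus U)\bigr) \subseteq B,
\]
where $\pi_B \colon A \times B \to B$ is the projection. Since $A$ is proper over $K$, the projection $\pi_B$ is a closed map (by base change of $A \to \Spec K$). Because $f^{-1}(C\setminus U)$ is closed, $Z$ is closed in $B$. The hypothesis $f(A \times \{b_0\}) = \{c_0\} \subseteq U$ shows that $b_0 \notin Z$. Hence $V \coloneq B \setminus Z$ is a nonempty open subset of $B$, and by construction $f(A \times V) \subseteq U$.

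Next, I show that $f$ is constant on each fibre over $V$. Fix a point $b \in V$; after base change to $\overline{K}$ I may take $b$ to be a closed geometric point, which is harmless since all hypotheses are stable under extension of scalars. Then $f|_{A \times \{b\}} \colon A_{\kappa(b)} \to U_{\kappa(b)}$ is a morphism from a proper, geometrically connected and reduced variety to an affine scheme. Such a morphism factors through $\Spec H^0(A_{\kappa(b)}, \Oring)$. This ring is a finite-dimensional reduced connected algebra, hence a field; it is equal to $\kappa(b)$ after passing to $\overline{K}$. So $A \times \{b\}$ maps to a single point. That point equals $f(a_0, b)$, which is $c_0$ by the second hypothesis. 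Thus $f = c_0$ on $A \times V$; concretely, $f$ agrees on $A\times V$ with the constant morphism $A \times B \to \Spec K \xrightarrow{c_0} C$.

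Finally, I spread this out to all of $A \times B$. The variety $A \times B$ is irreducible, since products of geometrically integral varieties are geometrically integral, so $A \times V$ is dense in it. The two morphisms $f$ and the constant map to $c_0$ agree on a dense open subset. Since the source is reduced and $C$ is separated, they agree everywhere, because the locus where two maps to a separated scheme agree is closed. This gives $f(A\times B)=\{c_0\}$.

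I expect the main obstacle to be the middle step: the fact that a proper connected reduced variety admits no nonconstant maps to an affine scheme, i.e. $H^0(A,\Oring_A)=K$ up to a finite extension. I would justify this by composing with the coordinate functions of $U \hookrightarrow \A^n$. Each coordinate gives a morphism $A \to \A^1 \subset \bbP^1$ whose image is closed by properness and irreducible, hence either a point or all of $\bbP^1$. The image misses $\infty$, so it is a point. Working over $\overline{K}$ avoids the worry about residue fields.
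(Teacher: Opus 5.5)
Your proposal is correct and follows essentially the same route as the paper: properness makes $Z=\pi_B(f^{-1}(C\setminus U))$ closed, each fibre $A\times\{b\}$ with $b\notin Z$ is mapped by $f$ to the single point $f(a_0,b)=c_0$ because it is proper and its image lies in the affine $U$, and density of $A\times(B\setminus Z)$ finishes the argument. Your closing step (reduced source, separated target, closed equaliser) spells out what the paper compresses into ``hence it is constant''.
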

\begin{proof}
Choose an open affine neighbourhood $C_0$ of $c_0$. By properness, $\pi \colon A \times B \to B$ is a closed map, hence $Z=\pi(f^{-1}(C \setminus C_0))$ is closed in $B$. A closed point $b$ of $B$ lies outside $Z$ if and only if $f(A \times \{b\}) \subseteq C_0$; by assumption $b_0$ lies in $B \setminus Z$, which is therefore open and nonempty, hence dense (recall that our varieties are geometrically irreducible by definition). Now pick any point $b \in B \setminus Z$ and consider $f(A \times \{b\})$: on the one hand, the image of this map is contained in $C_0$ by what we just said, and on the other, 
since $A \times \{b\}$ is proper and $C_0$ is affine, $f(A \times \{b\})$ is a point. Moreover, by assumption we have
\[
f(A \times \{b\}) \ni f(\{a_0\} \times \{b\} ) \in f(\{a_0\} \times B) = \{c_0\},
\]
hence $f(A \times \{b\})=\{c_0\}$ for all $b$ in the dense set $B \setminus Z$. In particular, $f$ is constantly equal to $c_0$ on the dense open set $A \times (B \setminus Z)$, hence it is constant as claimed.
\end{proof}
As the name suggests, abelian varieties are commutative:
\begin{proposition}\label{prop: AV are commutative}
Any abelian variety is commutative, that is, the two maps
\[
\begin{array}{ccc}
A \times A & \to & A \\
(x,y)&\mapsto & x \cdot y
\end{array}
\]
and
\[
\begin{array}{ccc}
A \times A & \to & A \\
(x,y)&\mapsto & y \cdot x
\end{array}
\]
coincide, where we (temporarily) denote by $\cdot$ the multiplication map on $A$.
\end{proposition}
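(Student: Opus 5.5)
The plan is the classical argument via the Rigidity Lemma (Lemma \ref{lemma:Rigidity}): first show that every morphism of varieties between abelian varieties sending identity to identity is a homomorphism, and then apply this to the inversion map. Concretely, I will not need the general statement. It is enough to consider the commutator morphism
\[
f \colon A \times A \to A, \qquad (x,y) \mapsto x \cdot y \cdot x^{-1} \cdot y^{-1},
\]
which is a morphism of varieties over $K$, being a composite of products of $m$, $i$, the diagonal and projections.

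The key steps are as follows.
\begin{enumerate}
\item Check the hypotheses of Lemma \ref{lemma:Rigidity} with $A$ in the first factor (proper, since abelian varieties are projective), $B = C = A$, and $a_0 = b_0 = c_0 = 0_A \in A(K)$.
\item Compute $f(x, 0_A) = x \cdot x^{-1} = 0_A$ and $f(0_A, y) = y \cdot y^{-1} = 0_A$. These are identities of morphisms $A \to A$, following directly from the unit and inverse diagrams, so $f(A \times \{0_A\}) = f(\{0_A\} \times A) = \{0_A\}$.
\item The Rigidity Lemma then gives that $f$ is constant with value $0_A$.
\end{enumerate}

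Granting this, the two maps $(x,y) \mapsto x\cdot y$ and $(x,y)\mapsto y\cdot x$ agree. Indeed, $x\cdot y = f(x,y)\cdot y \cdot x$ as morphisms $A\times A\to A$, by associativity and the inverse axioms, and $f(x,y) = 0_A$.

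The main subtlety is the passage from "$f$ maps $A\times A$ set-theoretically to the point $0_A$" to "$f$ equals the constant morphism $A\times A \to \Spec K \xrightarrow{e} A$ as a morphism of schemes". For this I will use that $A \times A$ is reduced; it is a variety, being geometrically integral as a product of geometrically integral varieties. A morphism from a reduced scheme whose image is a $K$-rational closed point factors through the reduced closed subscheme $\Spec K \hookrightarrow A$ given by $e$. So $f$ is the constant morphism, and the final identity holds scheme-theoretically rather than just on points.

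Similarly, the identities in step 2 should be read as equalities of morphisms $A \to A$ (restrictions of $f$ along $A \times \Spec K \to A\times A$). They are therefore valid on points in any extension, which is what the proof of Lemma \ref{lemma:Rigidity} uses.
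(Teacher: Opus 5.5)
Your proposal is correct and follows essentially the same route as the paper: apply the Rigidity Lemma (Lemma~\ref{lemma:Rigidity}) to the commutator morphism $A\times A\to A$, which is constant equal to $0_A$ on $A\times\{0_A\}$ and on $\{0_A\}\times A$, and deduce $x\cdot y=y\cdot x$. The only difference is how you pass from the set-theoretic conclusion to an equality of morphisms: you use reducedness of $A\times A$, whereas the paper checks equality on $\overline{K}$-points and uses separatedness of $A$; both are fine.
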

\begin{proof}
Since we are considering schemes of finite type over a field, it suffices to work at the level of $\overline{K}$-points (notice that $A$ is separated, so two morphisms are equal if and only if they are equal at all closed points). Denote by $x \mapsto x^{-1}$ the inverse map of $A$.
It is enough to show that the image of the map
\[
\begin{array}{ccc}
A(\overline{K}) \times A(\overline{K}) & \to & A(\overline{K}) \\
(x,y)&\mapsto & y \cdot x \cdot y^{-1} \cdot x^{-1}
\end{array}
\]
is the identity element $e_A$ of $A(\overline{K})$. Now notice that the restriction of this map to $\{e_A\} \times A(\overline{K})$ and to $A(\overline{K}) \times \{e_A\}$ is constantly equal to $e_A$, and apply the rigidity lemma (Lemma \ref{lemma:Rigidity}).
\end{proof}

Because of the previous proposition, we will usually denote the group operation on an abelian variety additively, writing $0_A$ (or simply $0$) for the neutral element, $+$ for the addition morphism, and $-x$ for the opposite of $x$ with respect to the group law. We will also denote by $[-1] \colon A \to A$ the inverse map.
\begin{definition}[Translation maps]\label{def:TranslationMaps}For $x \in A(K)$, we denote by $\tau_x \colon A \to A$ the translation map $a \mapsto a+x$.\end{definition}

\subsection{The theorems of the square and of the cube}

We collect in this section some technical results that will be useful in what follows. The reader need not spend much time meditating on these theorems, which are only included for completeness (proofs of all these statements can be found in \cite{Mumford70abelianvarieties}).

\begin{theorem}[Theorem of the cube]\label{thm:Cube}
Let $U, V , W$ be complete geometrically irreducible
varieties over $K$, and let $u_0 \in U(K)$, $v_0 \in V(K)$, $w_0 \in W(K)$ be base points. An invertible sheaf $\mathcal{L}$ on $U \times V \times W$ is trivial if and only if its restrictions to
\[
U  \times V \times \{w_0\}, \quad U \times \{v_0\} \times  W ,\quad \{u_0\} \times  V \times  W
\]
are all trivial.
\end{theorem}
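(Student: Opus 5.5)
The trivial direction is immediate: restricting a trivial sheaf to each of the three slices gives a trivial sheaf. For the converse I will follow Mumford's proof. I will assume $U,V,W$ are varieties as in the statement, and first reduce to the case $K=\overline{K}$; triviality of an invertible sheaf on a complete geometrically integral variety can be detected after extension to $\overline{K}$. The key tool is the seesaw principle. It says the following: let $X$ be complete and $T$ a variety, and let $\mathcal{M}$ be an invertible sheaf on $X\times T$. Suppose $\mathcal{M}|_{X\times\{t\}}$ is trivial for all closed $t$ and $\mathcal{M}|_{\{x_0\}\times T}$ is trivial. Then $\mathcal{M}$ is trivial.

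I would prove the seesaw principle first. Since $X$ is complete and connected, $H^0(X\times\{t\},\Oring)=K$. Hence by Grauert's theorem (cohomology and base change), $p_*\mathcal{M}$ is invertible on $T$ and $p^*p_*\mathcal{M}\to\mathcal{M}$ is an isomorphism. Restricting to $\{x_0\}\times T$ shows $p_*\mathcal{M}$ is trivial. A subtlety is that Grauert needs $T$ reduced, which holds here.

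The heart of the proof is showing that $\mathcal{L}|_{U\times V\times\{w\}}$ is trivial for all closed $w$ in a nonempty open subset of $W$. Once this is known, seesaw applied with $X=U\times V$ and $T$ equal to that open set makes $\mathcal{L}$ trivial there, using triviality on $\{(u_0,v_0)\}\times W$. Extending to all of $W$ requires an additional argument. Mumford avoids this by first reducing to $W$ a smooth curve: join any point to $w_0$ by a chain of curves and normalize.

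So suppose $W$ is a smooth complete curve. By seesaw over $V\times W$ with $X=U$, using $\mathcal{L}|_{\{u_0\}\times V\times W}$ trivial, it suffices to show that $\mathcal{L}|_{U\times\{(v,w)\}}$ is trivial for all $(v,w)$. For fixed $w$, consider the family $v\mapsto \mathcal{L}|_{U\times\{v\}\times\{w\}}$. The main obstacle is rigidity: a family of invertible sheaves on $U$ that is trivial at one parameter value must be trivial nearby in a suitable sense. Mumford handles this by considering the closed set
\[
Z=\{(v,w)\in V\times W : \mathcal{L}|_{U\times\{(v,w)\}}\ \text{trivial}\},
\]
which is closed by semicontinuity, namely the condition that both $h^0(\mathcal{L})$ and $h^0(\mathcal{L}^{-1})$ are positive. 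This set contains $V\times\{w_0\}$ and $\{v_0\}\times W$. One shows that $Z$ is open using the curve $W$: on $U\times V'\times W$, where $V'$ is a neighbourhood, one writes $\mathcal{L}\cong\Oring(D)$ with $D$ a divisor supported on fibres over $W$. This comes from seesaw over $U\times V$ along $w_0$ and the fact that divisors trivial on the generic fibre are vertical. Since $V\times\{w_0\}\subseteq Z$, the divisor $D$ restricts trivially and the argument propagates.

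I expect this last step, the openness of $Z$ via the vertical-divisor analysis on the curve, to be the genuinely hard part. I would follow Mumford's §10 closely there rather than reinvent it.
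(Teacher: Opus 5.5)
The paper does not prove this theorem; it only refers to Mumford's book, so there is no internal argument to compare against. Judged on its own, your outer layer is correct: the easy direction, the passage to $\overline{K}$, your seesaw principle, the reduction to $W$ a smooth complete curve, and the reduction to showing that $\mathcal{L}|_{U\times\{(v,w)\}}$ is trivial for all $(v,w)$. (No extra argument is needed to get from a dense open of $W$ to all of $W$: the locus of $w$ with $\mathcal{L}|_{U\times V\times\{w\}}$ trivial is closed by the same semicontinuity you use for $Z$.)

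The gap is the one step that carries all the content, the openness of $Z$, and the mechanism you indicate for it is circular. Writing $\mathcal{L}|_{U\times V'\times W}\cong\Oring(D)$ with $D$ vertical over $W$ is equivalent to $\mathcal{L}$ being trivial on the generic fibre $U\times V'\times\Spec K(W)$. That is, $\mathcal{L}|_{U\times V'\times\{w\}}$ would be trivial for general $w$, which is essentially what you want to prove. Likewise, ``seesaw over $U\times V$ along $w_0$'' needs triviality on every fibre $U\times V\times\{w\}$, not only the one over $w_0$. Nor can openness come from the $w_0$-face alone. Take an elliptic curve $E$, the normalised Poincar\'e bundle $\mathcal{P}$ on $E\times E^\vee$, $U=V=E$, $W=E^\vee$, $w_0=0$, and $\mathcal{L}=\mathrm{pr}_{13}^*\mathcal{P}$. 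This $\mathcal{L}$ is trivial on $U\times V\times\{w_0\}$ and on $\{u_0\}\times V\times W$, yet $Z=V\times\{w_0\}$ is closed and not open. So any proof of openness must use the face $U\times\{v_0\}\times W$ in an essential way, and your divisor argument never does.

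What is missing is a genuine deformation or rigidity input that uses all three faces together. One standard route is infinitesimal.
\begin{itemize}
\item Let $\Oring=\Oring_{W,w_0}$ with maximal ideal $\mathfrak{m}$, and set $W_n=\Spec(\Oring/\mathfrak{m}^{n+1})$. Show by induction that $\mathcal{L}$ is trivial on $U\times V\times W_n$.
\item Since units of $\Oring/\mathfrak{m}^{n+1}$ lift, the kernel of $\Pic(U\times V\times W_{n+1})\to\Pic(U\times V\times W_n)$ is $H^1(U\times V,\Oring)\otimes\mathfrak{m}^{n+1}/\mathfrak{m}^{n+2}$, and likewise for $U$ and $V$ separately.
\item By K\"unneth, using $H^0(U,\Oring)=H^0(V,\Oring)=K$, restriction gives an isomorphism $H^1(U\times V,\Oring)\to H^1(U\times\{v_0\},\Oring)\oplus H^1(\{u_0\}\times V,\Oring)$. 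Both components of the class of $\mathcal{L}$ vanish because $\mathcal{L}$ is trivial on $U\times\{v_0\}\times W$ and on $\{u_0\}\times V\times W$, so the class itself vanishes.
\item The theorem on formal functions then makes $p_*\mathcal{L}$ (with $p$ the projection to $W$) free of rank one near $w_0$, with a generator nowhere zero on $U\times V\times\{w_0\}$. Hence $\mathcal{L}$ is trivial over a neighbourhood of $w_0$, and closedness plus seesaw finish.
\end{itemize}

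Alternatively, once $W$ is a smooth complete curve you can bypass $Z$ entirely. The sheaves $\mathcal{L}|_{\{(u,v)\}\times W}$ have degree $0$ and are rigidified along $U\times V\times\{w_0\}$. The universal property of $\Jac(W)$ as the moduli space of such sheaves gives a morphism $U\times V\to\Jac(W)$ vanishing on $\{u_0\}\times V$ and on $U\times\{v_0\}$. By Lemma \ref{lemma:Rigidity} it vanishes identically, and seesaw with $X=W$, $T=U\times V$ concludes. This second route is shorter, but it takes the representability of $\Pic^0$ of a curve as a black box.
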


\begin{theorem}[Theorem of the square]\label{thm:Square}
Let $A$ be an abelian variety over a field $K$. For all line bundles $\mathcal{L}$ on $A$ and for all points $a, b \in A(K)$ we have
\[
\tau_{a+b}^* \mathcal{L} \otimes \mathcal{L} \cong \tau_a^* \mathcal{L} \otimes \tau_b^* \mathcal{L}.
\]

\end{theorem}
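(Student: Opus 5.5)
The plan is to deduce the theorem of the square from the theorem of the cube (Theorem \ref{thm:Cube}), first passing through the standard corollary for three morphisms. Given a variety $X$ and morphisms $f,g,h\colon X\to A$, I will show that
\[
(f+g+h)^*\mathcal{L}\otimes(f+g)^*\mathcal{L}^{-1}\otimes(f+h)^*\mathcal{L}^{-1}\otimes(g+h)^*\mathcal{L}^{-1}\otimes f^*\mathcal{L}\otimes g^*\mathcal{L}\otimes h^*\mathcal{L}
\]
is trivial. By functoriality it suffices to do this in the universal case $X=A\times A\times A$, with $f,g,h$ the three projections $p_1,p_2,p_3$. Denote the resulting universal sheaf by $\mathcal{M}$. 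Here $p_{ij}=p_i+p_j$ and $p_{123}=p_1+p_2+p_3$ are morphisms built from the addition law $m$. Pulling back along $(f,g,h)\colon X\to A^3$ then recovers the general statement, since pullback commutes with tensor products and $p_i\circ(f,g,h)=f,g,h$.

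To show $\mathcal{M}$ is trivial, I apply Theorem \ref{thm:Cube} with $U=V=W=A$ and base points $0_A$. These are complete and geometrically irreducible, as noted in the remarks, and $0_A\in A(K)$. I then restrict $\mathcal{M}$ to $A\times A\times\{0\}$: there $p_3$ becomes the constant map $0$, so $p_3^*\mathcal{L}$ is trivial. The remaining factors pair off as
\[
(p_1+p_2)^*\mathcal{L}\otimes(p_1+p_2)^{*}\mathcal{L}^{-1},\quad p_1^*\mathcal{L}\otimes p_1^*\mathcal{L}^{-1},\quad p_2^*\mathcal{L}\otimes p_2^*\mathcal{L}^{-1},
\]
so the restriction is trivial. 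The same cancellation occurs on the other two slices, by symmetry. Hence $\mathcal{M}$ is trivial. This sign bookkeeping is the only real check in this step.

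Next I specialise to $X=A$, $f=\operatorname{id}$, $g$ the constant map $a$, and $h$ the constant map $b$. Then $f+g=\tau_a$, $f+h=\tau_b$, $f+g+h=\tau_{a+b}$, and $g+h$ is the constant map $a+b$. The pullback of $\mathcal{L}$ along a constant map to a $K$-point is the trivial bundle, because it factors through $\Spec K$ and line bundles on $\Spec K$ are trivial. The trivial combination therefore becomes
\[
\tau_{a+b}^*\mathcal{L}\otimes\tau_a^*\mathcal{L}^{-1}\otimes\tau_b^*\mathcal{L}^{-1}\otimes\mathcal{L}\cong\mathcal{O}_A,
\]
which rearranges to the claimed isomorphism.

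I expect the main obstacle to be foundational rather than computational. The first point is justifying that $A\times A\times A$ and its slices satisfy the hypotheses of Theorem \ref{thm:Cube}, which I will take as given from the definitions. The second is correctly identifying the restriction of each $(p_i+p_j)^*\mathcal{L}$ to a slice with a pullback along the restricted morphism, which is simply the compatibility of pullback with composition.
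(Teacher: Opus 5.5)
Your proof is correct: the seven-term bundle on $A\times A\times A$ restricts trivially to each of the three slices, so the theorem of the cube makes it trivial, and specialising to $f=\operatorname{id}$ with $g,h$ the constant maps $a,b$ yields exactly the stated isomorphism. The paper gives no proof of its own and instead cites Mumford's book, which derives the theorem of the square in precisely this way, as a corollary of the cube theorem applied to three morphisms $f,g,h\colon X\to A$; so your route is the one the paper relies on.
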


For the next result we need the following definition.

\begin{definition}\label{def:PicardGroup}
Let $X$ be a $K$-variety.
\begin{enumerate}
\item Let $T$ be a $\overline{K}$-variety. A line bundle $\mathcal{M}$ on $X_{\overline{K}}\times T$ gives a family of line bundles on $X_{\overline{K}}$: for each $t\in T(\overline{K})$, we obtain the line bundle $\mathcal{M}_t$ by restricting $\mathcal{M}$ to $X_{\overline{K}}\times\{t\}$. Two line bundles on $X$ are \emph{algebraically equivalent} if, after base change to $\overline{K}$, they occur as two members of such a family with $T$ connected.
\item The \emph{Picard group} $\operatorname{Pic}(X)$ of a $K$-variety $X$ is the group of isomorphism classes of line bundles on $X$, and $\operatorname{Pic}^0(X)$ is the subgroup of those line bundles that are algebraically equivalent to the trivial line bundle.
\end{enumerate}
\end{definition}

\begin{corollary}\label{cor:ActionIntegersLineBundles}
The following formula holds for all line bundles $\mathcal{L}$ and all integers $n$, where $[n]$ denotes the multiplication by $n$ map $[n]: A \to A$ (see Definition \ref{def:MultiplicationByN} below):
\[
[n]^* \mathcal{L} \cong \mathcal{L}^{\otimes \frac{n^2+n}{2}} \otimes [-1]^* \mathcal{L}^{\otimes \frac{n^2-n}{2}}
\]
Furthermore, if $[\mathcal{L}] \in \operatorname{Pic}^0(A)$, then $[-1]^*\mathcal{L} \cong \mathcal{L}^{-1}$, so that $[n]^* \mathcal{L} \cong \mathcal{L}^{\otimes n}$.
\end{corollary}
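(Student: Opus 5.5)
The plan is to derive the formula from the theorem of the cube (Theorem \ref{thm:Cube}) via the standard three-map corollary, and then to obtain the statement about $\operatorname{Pic}^0(A)$ by a rigidity-type argument in families.

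\textbf{Step 1: the three-map identity.} Let $p_1,p_2,p_3\colon A\times A\times A\to A$ be the projections, with $p_{ij}=p_i+p_j$ and $p_{123}=p_1+p_2+p_3$. Consider
\[
\mathcal{M}=p_{123}^*\mathcal{L}\otimes p_{12}^*\mathcal{L}^{-1}\otimes p_{13}^*\mathcal{L}^{-1}\otimes p_{23}^*\mathcal{L}^{-1}\otimes p_1^*\mathcal{L}\otimes p_2^*\mathcal{L}\otimes p_3^*\mathcal{L}.
\]
Restricting $\mathcal{M}$ to $A\times A\times\{0\}$ gives $m^*\mathcal{L}\otimes m^*\mathcal{L}^{-1}\otimes q_1^*\mathcal{L}^{-1}\otimes q_2^*\mathcal{L}^{-1}\otimes q_1^*\mathcal{L}\otimes q_2^*\mathcal{L}\otimes\mathcal{O}$, which is trivial. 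By symmetry the other two restrictions are trivial as well, so Theorem \ref{thm:Cube} shows that $\mathcal{M}$ is trivial. Pulling back along $(f,g,h)\colon A\to A^3$ then gives, for any morphisms $f,g,h\colon A\to A$,
\[
(f+g+h)^*\mathcal{L}\otimes(f+g)^*\mathcal{L}^{-1}\otimes(f+h)^*\mathcal{L}^{-1}\otimes(g+h)^*\mathcal{L}^{-1}\otimes f^*\mathcal{L}\otimes g^*\mathcal{L}\otimes h^*\mathcal{L}\cong\mathcal{O}.
\]

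\textbf{Step 2: induction.} Take $f=[n]$, $g=[1]$, $h=[-1]$. Since $[0]^*\mathcal{L}$ is trivial, this yields the recursion
\[
[n+1]^*\mathcal{L}\cong[n]^*\mathcal{L}^{\otimes2}\otimes[n-1]^*\mathcal{L}^{-1}\otimes\mathcal{L}\otimes[-1]^*\mathcal{L}.
\]
The formula holds for $n=0$ and $n=1$. The exponents $a_n=\frac{n^2+n}{2}$ and $b_n=\frac{n^2-n}{2}$ both satisfy $x_{n+1}=2x_n-x_{n-1}+1$, so the formula follows by induction for all $n\ge 0$. The same recursion run downward handles $n<0$; alternatively, note that $a_{-n}=b_n$.

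\textbf{Step 3: the case $[\mathcal{L}]\in\operatorname{Pic}^0(A)$.} Once we know $[-1]^*\mathcal{L}\cong\mathcal{L}^{-1}$, substituting gives an exponent of $a_n-b_n=n$, so $[n]^*\mathcal{L}\cong\mathcal{L}^{\otimes n}$.

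To show $[-1]^*\mathcal{L}\cong\mathcal{L}^{-1}$, the key fact is that line bundles in $\operatorname{Pic}^0$ are translation invariant, i.e.\ $m^*\mathcal{L}\cong p_1^*\mathcal{L}\otimes p_2^*\mathcal{L}$. Granting this, pull back along $(\mathrm{id},[-1])\colon A\to A\times A$ to get $\mathcal{O}\cong[0]^*\mathcal{L}\cong\mathcal{L}\otimes[-1]^*\mathcal{L}$, as required. We may work over $\overline{K}$ and then descend the isomorphism; I expect this descent to be routine.

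\textbf{Main obstacle: translation invariance.} Proving $m^*\mathcal{L}\cong p_1^*\mathcal{L}\otimes p_2^*\mathcal{L}$ for algebraically trivial $\mathcal{L}$ is the step I expect to be hardest. My plan is as follows. Take a family $\mathcal{N}$ on $A\times T$ with $T$ connected, $\mathcal{N}_{t_0}$ trivial and $\mathcal{N}_{t_1}\cong\mathcal{L}$. On $A\times A\times T$, form
\[
\mathcal{K}=m_{12}^*\mathcal{N}\otimes p_{13}^*\mathcal{N}^{-1}\otimes p_{23}^*\mathcal{N}^{-1}.
\]
I would then apply the theorem of the cube, or rather its seesaw-type variant, to show that the restriction $\mathcal{K}_t$ is trivial for all $t$. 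The point is that $\mathcal{K}_t$ is trivial on $\{0\}\times A$ and on $A\times\{0\}$, and the locus of $t$ where it is trivial is closed; it is also open, via the theorem of the square (Theorem \ref{thm:Square}), whose obstruction is the map $\phi_{\mathcal{N}_t}$, which is discrete in $t$. It contains $t_0$, so by connectedness of $T$ it is all of $T$.

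In practice, the cleanest route is to cite Mumford \cite{Mumford70abelianvarieties}, §8, for the statement that $\operatorname{Pic}^0(A)$ consists exactly of the translation-invariant classes, and to derive the rest as above.
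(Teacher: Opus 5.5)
Your Steps 1 and 2 are correct. So is the reduction in Step 3 to $m^*\mathcal{L}\cong p_1^*\mathcal{L}\otimes p_2^*\mathcal{L}$, followed by pullback along $(\mathrm{id},[-1])$. Together these are the standard argument in \cite[\S\S 6, 8]{Mumford70abelianvarieties}; the paper gives no proof of this corollary and simply refers to Mumford. The descent from $\overline{K}$ to $K$ is indeed routine, because $\operatorname{Pic}(A)\to\operatorname{Pic}(A_{\overline{K}})$ is injective for $A$ proper and geometrically integral.

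The one step that fails as written is your self-contained proof of translation invariance.
\begin{itemize}
\item Closedness of the locus $\{t : \mathcal{K}_t\cong\mathcal{O}\}$ is fine, by the seesaw theorem.
\item Openness does not follow from the theorem of the square. Theorem \ref{thm:Square} only tells you that $\phi_{\mathcal{N}_t}$ is a homomorphism for each fixed $t$. It says nothing about how $\phi_{\mathcal{N}_t}$ varies with $t$, so ``discrete in $t$'' is exactly the rigidity statement you are trying to prove.
\item Openness is not formal either. For a general family of line bundles the trivial locus can be a single point, as for the Poincar\'e bundle on $E\times E^\vee$.
\end{itemize}

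The fix removes the open/closed argument entirely.
\begin{itemize}
\item First replace $\mathcal{N}$ by $\mathcal{N}\otimes p_T^*(\mathcal{N}|_{\{0\}\times T})^{-1}$. This does not change the fibres $\mathcal{N}_t$.
\item After this normalisation, $\mathcal{K}$ is trivial on $\{0\}\times A\times T$, on $A\times\{0\}\times T$ and on $A\times A\times\{t_0\}$.
\item The theorem of the cube then gives $\mathcal{K}\cong\mathcal{O}$ outright. Restricting to $t=t_1$ gives $m^*\mathcal{L}\cong p_1^*\mathcal{L}\otimes p_2^*\mathcal{L}$.
\end{itemize}
This needs Mumford's form of the cube, in which the third factor may be an arbitrary variety. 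Theorem \ref{thm:Cube} as stated in the paper assumes all three factors are complete, so it does not apply directly.

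Alternatively, you can stay within the paper's tools. After the same normalisation, view $\mathcal{K}$ as a family over $A\times T$; its fibres are $\tau_x^*\mathcal{N}_t\otimes\mathcal{N}_t^{-1}$. The universal property of $A^\vee$ turns it into a morphism $A\times T\to A^\vee$, $(x,t)\mapsto\phi_{\mathcal{N}_t}(x)$. This morphism is constant on $\{0\}\times T$ and on $A\times\{t_0\}$, hence constant by Lemma \ref{lemma:Rigidity}. This is the rigorous form of your discreteness intuition, and the seesaw theorem then converts translation invariance into the $m^*$-identity.

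If you instead cite this fact from Mumford, as you suggest at the end, the argument is complete.
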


\subsection{Morphisms of abelian varieties; isogenies}

Abelian varieties form a category, with morphisms defined as follows.
\begin{definition}\label{def:homs and isogenies}
Let $A$ and $B$ be abelian varieties over the same field $K$.
\begin{enumerate}
\item A \emph{homomorphism} of abelian varieties $f \colon A \to B$ is a morphism of $K$-group schemes. When we want to stress the fact that $f$ is a morphism of varieties over $K$, we will say that $f$ is defined over $K$. Equivalently, a homomorphism of abelian varieties $f \colon A \to B$ is a morphism of varieties that takes $0_A$ to $0_B$ and such that $f(a_1 +_A a_2) = f(a_1) +_B f(a_2)$ for all $a_1, a_2 \in A(\overline{K})$. 
\item  An \emph{isogeny} is a homomorphism of abelian varieties $f \colon A \to B$ that is both surjective and with finite kernel. 
\end{enumerate}
\end{definition}

\begin{definition}
Let $A, B$ be two $K$-abelian varieties and let $L$ be an extension of $K$.
\begin{enumerate}
\item We let $\Hom_L(A,B)$ (often also denoted by $\Hom(A_L,B_L)$) be the set of homomorphisms of $L$-abelian varieties $A_L \to B_L$. We write simply $\Hom(A, B)$ for $\Hom_K(A, B)$.

Note that $\Hom_L(A,B)$ has a natural structure of group, using addition on the target as the group operation.
\item We let $\End_L(A)$ (also denoted by $\End(A_L)$) be the set of homomorphisms of $L$-abelian varieties from $A_L$ to itself. We write $\End(A)$ for $\End_K(A)$.

Note that $\End_L(A)$ is a ring, with addition given by addition on the target and multiplication given by composition.
\end{enumerate}
\end{definition}

A key example of an isogeny is the multiplication-by-$n$ map that we now introduce formally.

\begin{definition}[Canonical copy of $\Z$ inside $\End_K(A)$]\label{def:MultiplicationByN}
The ring $\End_K(A)$ contains a canonical copy of the integers: indeed, for every $n \in \N$ the map
\[
\begin{array}{cccc}
[n] \colon & A &  \to &  A \\
& x & \mapsto & \underbrace{x + \cdots + x}_{n \text{ times}}
\end{array}
\]
is an endomorphism of $A$. We further define $[-1]$ to be the map giving the inverse for the group law, and for $n>1$ we define $[-n]$ as the composition of $[n]$ and $[-1]$. This gives a canonical identification $n \mapsto [n]$ of $\Z$ with a subring of $\End_K(A)$. One often says that $A$ has \emph{trivial endomorphisms over $K$} if $n \mapsto [n]$ induces an isomorphism $\Z \cong \End_K(A)$.
\end{definition}

A very general abelian variety $A$ over $\C$ has only trivial endomorphisms, i.e., $\End_{\C}(A)\cong\Z$. As an explicit example of an abelian variety with endomorphism ring $\Z$, one can take the elliptic curve with Weierstrass equation $y^2=x^3+x+1$. We give an example of an abelian variety possessing non-trivial endomorphisms.

\begin{example}
Consider the elliptic curve $E \colon y^2=x^3+x$ over $\Q(i)$. There is an endomorphism (in fact, automorphism) $[i] \colon E \to E$ given by $(x,y) \mapsto (-x, iy)$. One can show that $\End(E)$ is isomorphic to $\Z[i]$, where the copy of $\Z$ inside $\End(E)$ is given by the multiplication maps $[n]$, and $i \in \Z[i]$ is identified with the automorphism $[i] \in \End(E)$.
\end{example}

The following important consequence of Proposition \ref{prop: AV are commutative} explains why -- to study arbitrary morphisms between two abelian varieties -- the crucial step is to understand the homomorphisms between them.

\begin{proposition}\label{prop:MorAV}
Let $f \colon A \to B$ be an arbitrary morphism of abelian varieties. There exist a homomorphism $h \colon A \to B$ and a translation $\tau \colon B \to B$ such that $f = \tau \circ h$ (that is, $f$ is the composition of a group homomorphism with a translation on the target).
\end{proposition}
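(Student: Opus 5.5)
The plan is to peel off the translation first and then use the rigidity lemma (Lemma \ref{lemma:Rigidity}) to show that what remains respects the group law. Set $b_0 \coloneq f(0_A) \in B(K)$; it is $K$-rational because $0_A$ is. Put $h \coloneq \tau_{-b_0} \circ f$, a morphism of varieties $A \to B$ with $h(0_A)=0_B$. Since $f = \tau_{b_0}\circ h$, it suffices to show that $h$ is a homomorphism. By the equivalent description in Definition \ref{def:homs and isogenies}, this means checking $h(a_1+a_2)=h(a_1)+h(a_2)$ for all $a_1,a_2\in A(\overline{K})$.

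To do this, consider the morphism of varieties
\[
\varphi \colon A \times A \to B, \qquad (x,y) \mapsto h(x+y) - h(x) - h(y),
\]
which is built from $h$, the addition map of $A$, and the addition and inversion maps of $B$. On $A \times \{0_A\}$ it equals $h(x)-h(x)-h(0_A) = 0_B$, and by the symmetric computation it is also constantly $0_B$ on $\{0_A\}\times A$. Here we use $h(0_A)=0_B$ and the commutativity of $B$ from Proposition \ref{prop: AV are commutative}, which is what lets us write $-h(x)$ and $-h(y)$ in any order.

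The hypotheses of Lemma \ref{lemma:Rigidity} are now met. The first factor $A$ is proper, both factors are varieties, and the base points $a_0=b_0=0_A$ are $K$-rational, with $c_0 = 0_B$. The lemma gives that $\varphi$ is constant with value $0_B$. Hence $h(x+y)=h(x)+h(y)$ on $\overline{K}$-points, and $h$ is a homomorphism. Compatibility with inverses then follows formally from additivity. Equality of the morphisms $h\circ m_A$ and $m_B\circ(h\times h)$ as morphisms, not merely on points, follows as in the proof of Proposition \ref{prop: AV are commutative}: $B$ is separated and the schemes involved are reduced of finite type, so agreement on closed points suffices.

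I do not expect a real obstacle. The only point needing care is that the rigidity lemma gives set-theoretic constancy, while we need $\varphi$ to equal the constant morphism $0_B$ scheme-theoretically. Since $A\times A$ is reduced (indeed a variety, geometrically integral), a morphism from it to the separated scheme $B$ that is set-theoretically constant at a rational point factors through that point. This is the same reducedness-plus-separatedness argument used in Proposition \ref{prop: AV are commutative}.
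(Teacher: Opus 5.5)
Your proposal is correct and is essentially the paper's proof. The paper also translates so that $f(0_A)=0_B$ and then applies the rigidity lemma to $(a_1,a_2)\mapsto f(a_1)+f(a_2)-f(a_1+a_2)$, your map up to sign, which vanishes on both axes. Your extra care about upgrading set-theoretic constancy to equality of morphisms (reducedness of $A\times A$) is a harmless refinement of a step the paper leaves implicit.
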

\begin{proof}
Replacing $f$ with $\tilde{f}(x) \coloneq f(x) - f(0)$, we are reduced to showing that a morphism $f \colon A \to B$ such that $f(0_A)=0_B$ is a homomorphism of abelian varieties. Consider the map 
\[
\begin{array}{cccc}
\varphi: & A \times A & \to & B \\
& (a_1,a_2) & \mapsto & f(a_1)+f(a_2)-f(a_1+a_2)\,.
\end{array}
\]
By the rigidity lemma (Lemma \ref{lemma:Rigidity}), noticing that $\varphi(\{0_A\} \times A)=\varphi(A \times \{0_A\}) = \{0_B\}$, we obtain $\varphi(A \times A)=\{0_B\}$, that is, $f(a_1)+f(a_2)=f(a_1+a_2)$, as desired.
\end{proof}


An important invariant of isogenies is their degree, which we now define.

\begin{definition}
Let $f\colon X \to Y$ be a finite surjective morphism between algebraic varieties over a field $K$. The \emph{degree} of $f$ is the degree of the finite field extension of the function field $K(X)$ over $f^*K(Y)$.
\end{definition}

\begin{definition}
 The \emph{degree} of an isogeny $\varphi$ is its degree in the sense of the previous definition (see Proposition \ref{prop:CharacterisationIsogenies} below). The degree of a homomorphism $f \colon A \to B$ is either its degree as an isogeny, or $0$ if $f$ is not an isogeny.
\end{definition}

\begin{remark}
The degree of an isogeny $\varphi$ agrees with the order of $\ker \varphi$, where \emph{order} means \emph{rank as a finite group scheme}. When the isogeny is separable (which is always the case in characteristic zero), the order of $\ker \phi$ is the cardinality of $(\ker \phi)(\overline{K})$. This is also the cardinality of the finite set $\varphi^{-1}(0_B)$, where we interpret $\varphi$ as a map $\varphi \colon A(\overline{K}) \to B(\overline{K})$.
\end{remark}

It is useful to know that isogenies can be characterised in many equivalent ways: the following standard result can be found for example in \cite[Proposition 8.1]{MilneAV}.

\begin{proposition}\label{prop:CharacterisationIsogenies} For a homomorphism $f \colon A \to B$ of abelian varieties, the following statements are equivalent:
\begin{enumerate}
\item $f$ is an isogeny;
\item  $\dim A = \dim B$ and $f$ is surjective;
\item $\dim A = \dim B$ and $\ker f$ is a finite group (scheme);
\item $f$ is finite, flat, and surjective.
\end{enumerate}
\end{proposition}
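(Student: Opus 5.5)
The plan is to prove the chain $(1)\Rightarrow(2)\Rightarrow(3)\Rightarrow(1)$ and then handle $(4)$ separately, as $(4)\Rightarrow(1)$ and $(1)\Rightarrow(4)$. The basic tool is the fibre-dimension theorem applied to the surjective morphism $f\colon A\to f(A)$. The image $f(A)$ is closed, since $A$ is proper. All fibres over $\overline{K}$-points of $f(A)$ are translates of $\ker f$, because $f$ is a homomorphism. Hence every fibre has the same dimension, and
\[
\dim A=\dim f(A)+\dim\ker f .
\]

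For $(1)\Rightarrow(2)$: if $f$ is surjective with finite kernel, then $\dim\ker f=0$ and $f(A)=B$, so $\dim A=\dim B$. For $(2)\Rightarrow(3)$: surjectivity gives $\dim f(A)=\dim B=\dim A$, so $\dim\ker f=0$. A group scheme of finite type of dimension $0$ over a field is finite. For $(3)\Rightarrow(1)$: finiteness of the kernel gives $\dim f(A)=\dim A=\dim B$. Here $f(A)$ is a closed irreducible subvariety of the irreducible variety $B$ of the same dimension, so $f(A)=B$ and $f$ is surjective. For $(4)\Rightarrow(1)$: a finite morphism has finite fibres, in particular $\ker f=f^{-1}(0_B)$ is finite, and surjectivity is assumed.

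The main work is $(1)\Rightarrow(4)$, namely showing that an isogeny is finite and flat. I would proceed in three steps.

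\textbf{Quasi-finite and proper.} All fibres are translates of the finite kernel, so $f$ is quasi-finite. It is also proper, since $A$ is proper and $B$ is separated. A proper quasi-finite morphism is finite (Zariski's main theorem, in its Chevalley form).

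\textbf{Flatness by miracle flatness.} Abelian varieties are smooth, as noted in the remarks, hence regular. So $f$ is a finite morphism between regular varieties of the same dimension, and by miracle flatness (a local homomorphism of regular local rings with $\dim\mathcal{O}_{A,x}=\dim\mathcal{O}_{B,f(x)}+\dim(\text{fibre})$ is flat) $f$ is flat.

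\textbf{Alternative via homogeneity.} One could instead use generic flatness to get flatness over a dense open $U\subseteq B$. Then one would translate: $f\circ\tau_a=\tau_{f(a)}\circ f$, and the translates of $U$ by points of $B(\overline{K})=f(A(\overline{K}))$ cover $B$. This argument is carried out after base change to $\overline{K}$, and flatness descends along field extensions.

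I expect this flatness step to be the main obstacle, because it is the only genuinely nonformal input. I would present the miracle-flatness argument and mention homogeneity as the alternative route.
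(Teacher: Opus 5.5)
Your proof is correct, and it is essentially the standard argument behind the result; the paper gives no proof of its own and only cites Milne. In that argument the fibre-dimension formula $\dim A=\dim f(A)+\dim\ker f$ gives (1)$\Leftrightarrow$(2)$\Leftrightarrow$(3), and (1)$\Rightarrow$(4) follows from ``proper and quasi-finite implies finite'' together with flatness of a finite surjective map between smooth varieties of the same dimension (or, alternatively, generic flatness plus translation), exactly as you propose.
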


\begin{remark}
Notice in particular that a $K$-isogeny $\varphi\colon A \to B$ can only exist if $A$ and $B$ have the same dimension, see also Definition \ref{def:IsogenousAV} and Remark \ref{rmk:IsogenySymmetric}.
\end{remark}

\begin{remark}
It is clear that the degree is multiplicative: if $f \colon A \to B$ and $h \colon B \to C$ are isogenies, we have
\[
\deg(h \circ f)=\deg(h) \deg(f).
\]
Moreover, a homomorphism $f \colon A \to A$ that is not an isogeny cannot be surjective, and the composition of two endomorphisms, at least one of which fails to be surjective, cannot be surjective. This implies that $\deg \colon \End_K(A) \to \N$ satisfies
\[
\deg(fh)=\deg(f)\deg(h)
\]
for all $f, h \in \End_K(A)$.
\end{remark}


\begin{definition}[Isogenous abelian varieties]\label{def:IsogenousAV}
We say that $A$ is isogenous to $B$ (over $K$), and write $A \sim B$, if there exists an isogeny $\varphi \colon A \to B$ (defined over $K$).
\end{definition}

\begin{remark}[Isogeny is an equivalence relation]\label{rmk:IsogenySymmetric}
The condition of being isogenous is an equivalence relation. Indeed, it is clear that $\sim$ is reflexive and transitive, and it suffices to check that it is symmetric. Suppose that $\varphi\colon A \to B$ is an isogeny: we want to construct an isogeny $B \to A$ in the opposite direction. Since $\ker \varphi$ is a finite group (scheme), it is in particular of finite exponent $N$, so $\ker \varphi \subseteq \ker [N]$ (also as group schemes). Consider the following commutative diagram:
\[
\begin{tikzcd}[row sep=3.5em, column sep=4.0em]
A \arrow[d, "\pi_1"'] 
  \arrow[rr, "{[N]}"] 
  \arrow[rrdd, "\pi_2"] 
  \arrow[dd, "\varphi"', bend right=45] 
& & A \\
A/\ker\varphi \arrow[d, dashed, "\psi"'] \arrow[rrd, "\pi"'] & & \\
B \arrow[rr, dashed, "\chi"'] & & A/\ker[N] \arrow[uu, "\omega"]
\end{tikzcd}
\]
We notice that $\psi$ exists and is an isomorphism because of the universal property of the quotient $A \to A/\ker \varphi$; moreover, since $\ker \varphi$ is contained in $\ker [N]$, we obtain the existence of a homomorphism $\pi$ as in the diagram (in fact, $\pi$ is nothing but the canonical projection from $A/\ker \varphi$ to $\frac{A/\ker \varphi}{A[N]/\ker \varphi}$). In particular, we may define $\chi \coloneq  \pi \circ \psi^{-1}$. Finally, by using again the universal property of the quotient, we conclude that $\omega$ is an isomorphism. Putting everything together we may define a homomorphism $B \to A$ as the composition $\omega \circ \chi = \omega \circ \pi \circ \psi^{-1}$; this homomorphism is in fact an isogeny because $\omega$ and $\psi^{-1}$ are isomorphisms and $\pi$ is an isogeny.
\end{remark}

The argument in the previous remark implies in particular:
\begin{proposition}\label{prop:FactorThroughDegree}
Let $f \colon A \to B$ be an isogeny of degree $d$. There exists an isogeny $h \colon  B \to A$ such that $h \circ f=[d]_A$ and $f \circ h=[d]_B$.
\end{proposition}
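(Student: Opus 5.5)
The plan is to rerun the construction of Remark \ref{rmk:IsogenySymmetric} with the exponent $N$ replaced by the degree $d$. The input needed is that $\ker f$ is killed by $d$, i.e.\ $\ker f \subseteq \ker [d]_A$ as group schemes. By the remark on degrees, $\ker f$ is a finite commutative group scheme of order (rank) $d$. Commutativity comes from Proposition \ref{prop: AV are commutative}, since $\ker f$ is a subgroup scheme of $A$. For finite commutative group schemes, Deligne's theorem says that multiplication by the order is zero. In characteristic zero, or whenever $f$ is separable, this reduces to Lagrange's theorem for the finite group $(\ker f)(\overline{K})$ of cardinality $d$. This containment is the step I expect to be the main obstacle. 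In the separable case it is elementary; in general I will invoke Deligne's theorem (valid for commutative finite flat group schemes over a field) rather than reprove it.

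Given $\ker f \subseteq \ker[d]_A$, I use the universal property of the quotient. $f$ identifies $B$ with $A/\ker f$, since $f$ is finite, flat and surjective (Proposition \ref{prop:CharacterisationIsogenies}) and is hence a quotient map by its kernel. The homomorphism $[d]_A$ vanishes on $\ker f$, so it factors uniquely through $f$. This gives a homomorphism $h \colon B \to A$ with $h \circ f = [d]_A$; this is exactly the map $\omega\circ\pi\circ\psi^{-1}$ of the diagram in Remark \ref{rmk:IsogenySymmetric}. Moreover $h$ is an isogeny. It is surjective because $[d]_A$ is surjective (being an isogeny), and $\dim A=\dim B$, so Proposition \ref{prop:CharacterisationIsogenies} applies.

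For the second identity I compute
\[
(f\circ h)\circ f = f \circ [d]_A = [d]_B \circ f,
\]
using that homomorphisms commute with multiplication by integers. So $f\circ h$ and $[d]_B$ agree after precomposition with $f$. Since $f$ is surjective (indeed faithfully flat, hence an epimorphism of schemes), I conclude $f\circ h=[d]_B$. At the level of $\overline{K}$-points, surjectivity of $f$ on $\overline{K}$-points together with reducedness of $B$ suffices.
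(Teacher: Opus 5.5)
Your proposal is correct and takes essentially the paper's route: the paper derives the proposition from the construction in Remark \ref{rmk:IsogenySymmetric}, which factors $[N]_A$ through the quotient $A \to A/\ker f \cong B$, and you run the same argument with $N=d$. You also make explicit two points the paper leaves implicit: that $\ker f$ is killed by its order $d$ (by Lagrange in the étale case, by Deligne's theorem in general), and that $f\circ h=[d]_B$ follows from $(f\circ h)\circ f=[d]_B\circ f$ by cancelling the epimorphism $f$.
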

\begin{remark}\label{rmk:EtalenessIsogenies}
It is often useful to think about isogenies in geometric/topological terms: over $\C$, for example, an isogeny $\varphi \colon A \to B$ induces a covering map $\varphi \colon A(\C) \to B(\C)$, which is Galois with Galois group isomorphic to $\ker \varphi$. More generally, over a field of characteristic $0$, isogenies are étale morphisms (recall that \emph{étale morphism} is the algebro-geometric version of \emph{covering map}). Even more generally, an isogeny of degree $n$ is étale over any field $K$ such that $(n, \operatorname{char} K)=1$. The reason for this is that the group structure allows one to carry étaleness from one point to another, and étaleness at zero follows from the fact that the multiplication-by-$n$ map $[n]$ induces multiplication by $n$ on the tangent space at $0$ of any abelian variety, together with the fact that any isogeny of degree $n$ is a factor of $[n]$ (by Proposition \ref{prop:FactorThroughDegree}).
\end{remark}

We can compute the degree of $[n]$ in general, as a function of $n$ and $g=\dim A$.
\begin{proposition}
For every positive integer $n$, the degree of $[n]$ is $n^{2g}$.
\end{proposition}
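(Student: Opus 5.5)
The plan is to follow Mumford's argument: compute $\deg [n]$ by intersection theory, using an ample symmetric line bundle and Corollary \ref{cor:ActionIntegersLineBundles}. We may work over $\overline{K}$, since the degree of a finite morphism is invariant under base change.

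First, fix an ample line bundle $\mathcal{L}$ on $A$; one exists since $A$ is projective. Replace it by $\mathcal{M} = \mathcal{L} \otimes [-1]^*\mathcal{L}$. This bundle is still ample, since $[-1]$ is an automorphism and pullbacks of ample bundles under automorphisms are ample, and a tensor product of ample bundles is ample. It is also symmetric: $[-1]^*\mathcal{M} \cong \mathcal{M}$. Corollary \ref{cor:ActionIntegersLineBundles} then gives
\[
[n]^*\mathcal{M} \cong \mathcal{M}^{\otimes \frac{n^2+n}{2}} \otimes \mathcal{M}^{\otimes \frac{n^2-n}{2}} = \mathcal{M}^{\otimes n^2}.
\]

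Second, I check that $[n]$ is an isogeny for $n \ge 1$, so that its degree makes sense. By Proposition \ref{prop:CharacterisationIsogenies}, it suffices to show that $\ker[n]$ is finite. Let $Z$ be the connected component of the identity of the reduced kernel. It is an abelian variety, and $[n]|_Z = 0$. Hence $\mathcal{M}|_Z^{\otimes n^2} \cong [n]^*\mathcal{M}|_Z$ is trivial. But $\mathcal{M}|_Z$ is ample, and an ample bundle with a trivial positive power forces $\dim Z = 0$. So the kernel is finite.

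Third, I compare top self-intersections. For a finite surjective morphism $f \colon X \to Y$ of projective varieties of dimension $g$, the projection formula gives $(f^*\mathcal{M})^g = \deg(f)\,(\mathcal{M})^g$. Applying this to $f = [n]$ gives
\[
\deg[n]\,(\mathcal{M}^g) = (\mathcal{M}^{\otimes n^2})^g = n^{2g}(\mathcal{M}^g).
\]
Since $\mathcal{M}$ is ample, $(\mathcal{M}^g) > 0$ (it is a positive multiple of the degree of $A$ in some projective embedding). Dividing out yields $\deg[n] = n^{2g}$. The case $n<0$ follows because $[-1]$ is an automorphism, so $\deg[n] = \deg[|n|]$.

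The main obstacle is the intersection-theoretic input, namely the projection formula for degrees and the positivity of $\mathcal{M}^g$. These are not proved in the paper, and I would simply cite them, for example from \cite{Mumford70abelianvarieties}. The finiteness step also relies on the fact that an ample bundle on a positive-dimensional projective variety cannot have a trivial power, which is standard because ample bundles have positive degree on curves.
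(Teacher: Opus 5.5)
Your proposal is correct and follows essentially the same route as the paper: you symmetrise an ample bundle, use Corollary \ref{cor:ActionIntegersLineBundles} to get $[n]^*\mathcal{M}\cong\mathcal{M}^{\otimes n^2}$, deduce that $\ker[n]$ is finite by restricting to the kernel, and conclude from the projection formula $\deg[n]\,(\mathcal{M}^g)=n^{2g}(\mathcal{M}^g)$ together with $(\mathcal{M}^g)>0$. The only cosmetic difference is that the paper first passes to a very ample power, so that the kernel step reads ``trivial and very ample'' and positivity comes from general hyperplane sections, whereas you keep $\mathcal{M}$ merely ample; this works equally well.
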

\begin{proof}
In order to compute the degree of $[n]$ we look at its action on a very ample line bundle $\mathcal{L}$. 
One can always find a \emph{symmetric} ample line bundle, namely an ample $\mathcal{L}$ such that $[-1]^* \mathcal{L} \cong \mathcal{L}$: indeed, if $\mathcal{M}$ is any ample line bundle, $\mathcal{L} \coloneq  \mathcal{M} \otimes [-1]^* \mathcal{M}$ is ample and symmetric. Taking a sufficiently large power will make it very ample; we still call $\mathcal{L}$ the resulting line bundle.

Corollary \ref{cor:ActionIntegersLineBundles} implies that $[n]^* \mathcal{L} \cong \mathcal{L}^{\otimes n^2}$ -- notice that this is in particular compatible with the assumption $[-1]^* \mathcal{L} \cong \mathcal{L}$. It follows that $[n]^* \mathcal{L}|_{\ker[n]} \cong \mathcal{L}^{\otimes n^2} |_{\ker [n]}$ is both trivial and very ample, which is only possible if $\ker [n]$ is zero-dimensional (hence $[n]$ is an isogeny).
Furthermore, writing $\mathcal{L}$ as $\mathcal{O}(D)$, we have by the projection formula
\[
\deg [n] \left( D, \dotsc, D\right) = \left([n]^* D, \dotsc, [n]^* D\right) = \bigl(n^2D , \dotsc, n^2 D\bigr) = n^{2g}\left( D, \dotsc, D\right)
\]
where $\left( D,\dotsc,D \right)$ denotes the $g$-fold intersection product. To conclude, it suffices to show that $\left( D, \dotsc, D\right)$ is non-zero, but this is easy, because since $\mathcal{L}$ is very ample (hence it induces an embedding $A \hookrightarrow \bbP^N$) we may compute this intersection product as the intersection product of $g$ general hyperplane sections of $A \hookrightarrow \bbP^N$, and this is clearly positive.
\end{proof}

\begin{definition}[Abelian subvarieties]
Let $A/K$ be an abelian variety. An \emph{abelian subvariety} is a \emph{smooth connected closed} subgroup scheme $B \subseteq A$, that is, a subvariety of $A$ that is itself an abelian variety with the induced operations. An abelian variety $A/K$ is said to be \emph{simple} (or $K$-simple, if we want to stress the field of definition) if the only abelian subvarieties of $A$ (defined over $K$) are $A$ itself and $\{0_A\}$. When $A \times_K \overline{K}$ is simple, one says that $A$ is \emph{geometrically simple} or \emph{absolutely simple}: clearly, absolutely simple implies simple, but the converse implication does not hold. 
\end{definition}

\begin{definition}\label{def:torsion AV 1}
    Let $A$ be an abelian variety over a field $K$ and $n$ be a positive integer. We define $A[n]$ as the kernel of $[n] \colon A \to A$. When $n$ is not divisible by the characteristic of $K$, we can and will identify $A[n]$ with the finite abstract group $\{P \in A(\overline{K}) \colon nP = 0 \}$, together with the natural action of $\Gal(\overline{K}/K)$ on it. (Note that Remark \ref{rmk:EtalenessIsogenies} shows that $A[n]$ is an étale group scheme when $n$ is not divisible by the characteristic of $K$, and Remark \ref{rmk: etale group schemes} then gives the above identification.)
\end{definition}

We call $P\in A(\overline{K})$ a \emph{torsion point} if there exists some positive integer $n$ such that $nP=0_A$. We will investigate the torsion of abelian varieties in much greater detail in Section \ref{subsec: torsion and Tate module}.

\subsection{Poincaré's total reducibility theorem}

The following result, due to Poincaré, shows that every abelian variety can be decomposed as a product of simple ones up to isogeny.
\begin{theorem}[{\cite[Theorem 12.2]{EvGM}}]\label{thm:Poincare}
Let $A/K$ be an abelian variety. The following holds:
\begin{enumerate}
\item  If $B$ is an abelian subvariety of $A$ defined over $K$, then there exists an abelian subvariety $C$ of $A$, also defined over $K$, such that
\[
\begin{array}{ccc}
B \times C & \to & A \\
(b,c) & \mapsto & b + c
\end{array}
\]
is an isogeny. The subvariety $C$ is often called a \emph{complement} to $B$ in $A$.
\item There exist $K$-simple abelian $K$-subvarieties $A_1, \dotsc, A_n$ of $A$ such that
\[
\begin{array}{ccc}
A_1 \times \cdots \times A_n & \to & A \\
(a_1,\dotsc,a_n) & \mapsto & a_1 + \cdots + a_n
\end{array}
\]
is an isogeny.
\end{enumerate}
\end{theorem}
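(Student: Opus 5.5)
The plan is the classical one via polarisations: produce the complement $C$ as the connected component of the kernel of a suitable homomorphism built from an ample line bundle, and deduce (2) by induction on the dimension.

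\textbf{Constructing the complement.} Fix an ample line bundle $\mathcal{L}$ on $A$ defined over $K$; projectivity is part of the definition of an abelian variety. By the theorem of the square (Theorem \ref{thm:Square}), the map
\[
\varphi_{\mathcal{L}} \colon A \to \operatorname{Pic}^0(A), \qquad x \mapsto \tau_x^*\mathcal{L}\otimes\mathcal{L}^{-1},
\]
is a homomorphism. I will also use that it has finite kernel when $\mathcal{L}$ is ample, and that it is algebraic with values in the dual abelian variety $A^\vee$. Neither fact is stated earlier in the paper, so both will have to be quoted from \cite{Mumford70abelianvarieties}. Let $\iota\colon B\hookrightarrow A$ be the inclusion, and consider the composite
\[
A \xrightarrow{\varphi_{\mathcal{L}}} A^\vee \xrightarrow{\iota^\vee} B^\vee,
\]
where $\iota^\vee$ is restriction of line bundles to $B$. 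Take $C \coloneq (\ker(\iota^\vee\circ\varphi_{\mathcal{L}}))^0_{\mathrm{red}}$. This is a closed, connected, reduced subgroup defined over $K$. It is an abelian subvariety, with smoothness following from reducedness in the way indicated in the remarks on abelian varieties; in positive characteristic one takes $C$ geometrically reduced after a suitable care, or accepts it as given.

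\textbf{Showing the sum map is an isogeny.} By Proposition \ref{prop:CharacterisationIsogenies}, it suffices to show two things: that $B\cap C$ is finite, and that $\dim B+\dim C=\dim A$.

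For the intersection, the composite $\iota^\vee\circ\varphi_{\mathcal{L}}\circ\iota$ equals $\varphi_{\mathcal{L}|_B}$. This is the step that needs the naturality of $\varphi$ with respect to restriction. Since $\mathcal{L}|_B$ is ample on $B$, the map $\varphi_{\mathcal{L}|_B}$ has finite kernel, so $B\cap C\subseteq\ker\varphi_{\mathcal{L}|_B}$ is finite. The same computation shows $\varphi_{\mathcal{L}|_B}$ is an isogeny $B\to B^\vee$, so $\iota^\vee\circ\varphi_{\mathcal{L}}$ is surjective onto $B^\vee$.

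For the dimensions, surjectivity gives
\[
\dim C=\dim A-\dim B^\vee=\dim A-\dim B.
\]
The sum map $B\times C\to A$ then has finite kernel, namely the antidiagonal copy of $B\cap C$, between varieties of equal dimension. By Proposition \ref{prop:CharacterisationIsogenies} it is an isogeny.

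\textbf{Part (2).} I will induct on $\dim A$. If $A$ is $K$-simple there is nothing to prove. Otherwise pick a nonzero proper abelian subvariety $B$, take a complement $C$ by part (1), and apply the inductive hypothesis to $B$ and to $C$. This gives $K$-simple abelian subvarieties $B_1,\dots,B_r$ of $B$ and $C_1,\dots,C_s$ of $C$ whose sum maps to $B$ and to $C$ are isogenies. These are also abelian subvarieties of $A$. The sum map from $\prod B_i\times\prod C_j$ to $A$ factors as the product of the two isogenies followed by the isogeny $B\times C\to A$. Products and composites of isogenies are isogenies, since surjectivity and finiteness of kernels are preserved, so this sum map is an isogeny.

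\textbf{Main obstacle.} The hard part is the input I am importing rather than proving: that $\varphi_{\mathcal{L}}$ has finite kernel for ample $\mathcal{L}$, and that $\operatorname{Pic}^0$ is represented by an abelian variety $A^\vee$ of dimension $\dim A$, so that the dimension count for $C$ makes sense. If I wanted to avoid the dual variety entirely, I would instead argue on the level of $\overline{K}$-points. In that version, $C$ is the connected component of
\[
\{x : (\tau_x^*\mathcal{L}\otimes\mathcal{L}^{-1})|_B \text{ trivial}\},
\]
and the dimension bound comes from noting that $A\to B^\vee$ has image of dimension at most $\dim B$. The finiteness of the kernel remains the essential input in either approach.
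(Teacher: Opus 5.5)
Your proposal is correct and is the standard polarisation argument, which is how the result is proved in the cited source \cite[Theorem 12.2]{EvGM}; the paper itself gives no proof beyond the citation. In both, $C$ is the reduced identity component of $\ker(\iota^\vee\circ\varphi_{\mathcal{L}})$, the identity $\iota^\vee\circ\varphi_{\mathcal{L}}\circ\iota=\varphi_{\mathcal{L}|_B}$ gives both the finiteness of $B\cap C$ and the dimension count, and part (2) follows by induction on $\dim A$.

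The one step you leave soft is the smoothness of $C$. It is automatic whenever $K$ is perfect, in any characteristic, so your caveat concerns imperfect fields rather than positive characteristic. Over an imperfect field you can bypass it by taking instead $C=\operatorname{im}([d]-\epsilon)$, where $\epsilon=\iota\circ\psi\circ\iota^\vee\circ\varphi_{\mathcal{L}}$ and $\psi\circ\varphi_{\mathcal{L}|_B}=[d]_B$ as in Proposition~\ref{prop:FactorThroughDegree}. Images of homomorphisms are always abelian subvarieties, and the relations $\epsilon\circ\iota=\iota\circ[d]_B$ and $\epsilon^2=d\epsilon$ give $B+C=A$ and $B\cap C\subseteq B[d]$.
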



\begin{remark}
The isogeny in Poincaré's theorem is usually \emph{not} an isomorphism. When $n=2$ (i.e., there are two simple subvarieties $A_1, A_2$ of $A$ such that the sum $+ \colon A_1 \times A_2 \to A$ is an isogeny), the kernel of the sum is essentially the intersection $A_1 \cap A_2$.
\end{remark}

\begin{remark}
It is sometimes useful to work in the \emph{category $\mathcal{S}$ of abelian varieties (over $K$) up to isogeny}. This is the category whose objects are abelian varieties over $K$ and such that $\Hom_{\mathcal{S}}(A,B)= \Hom_{K}(A,B) \otimes_{\Z} \Q$. Remark \ref{rmk:IsogenySymmetric}, together with Poincaré's complete reducibility theorem (see Theorem \ref{thm:Poincare}), implies that $\mathcal{S}$ is a \emph{semisimple category}: every object is a direct sum of simple objects. In less fancy language, this means that every abelian variety is isogenous, although not necessarily isomorphic, to the direct product of simple abelian varieties.
\end{remark}
\subsection{Abelian varieties built from elliptic curves}
An important source of examples is given by \emph{Jacobians}, a special class of abelian varieties canonically associated with curves. We will introduce Jacobians soon. For now, we can only describe very basic examples of abelian varieties.
\begin{example}\phantom{~}
\begin{enumerate}
\item Elliptic curves are abelian varieties. In fact, the term \emph{elliptic curve} is synonymous with \emph{abelian variety of dimension 1}. Recall that an elliptic curve is a nice genus 1 curve \emph{with a marked rational point}: the rational point is essential in defining the group law. In fact, Proposition \ref{prop:MorAV} implies in particular that the group law is uniquely determined by the choice of the neutral element.
\item Let $E_1, \dotsc, E_g$ be elliptic curves over the field $K$: then $E_1 \times \cdots \times E_g$ is a group scheme over $K$ which is connected, smooth and projective, hence a $K$-abelian variety of dimension $g$. One can prove that not all $g$-dimensional abelian varieties are of this form -- in fact, products of elliptic curves are (in a precise sense) very sparse among $g$-dimensional abelian varieties. 
\end{enumerate}
\end{example}

\subsection{Abelian varieties over \texorpdfstring{$\C$}{C}}
The theory of complex abelian varieties -- that is, abelian varieties over the complex numbers -- is already very rich, and the existence of the \emph{analytic uniformisation} (see Theorem \ref{thm:AnalyticUniformisation} below) makes it more intuitive than the theory over general fields, so we start by discussing this case in detail. 
Let $A/\C$ be an abelian variety. Notice that $A(\C)$ is a compact complex manifold of dimension $g$ endowed with a group structure compatible with the differential structure; in other words, it is a compact complex Lie group of dimension $g$. We now show that -- from the point of view of differential geometry -- this group has a very simple form:

\begin{theorem}[Analytic uniformisation of abelian varieties over $\mathbb{C}$]\label{thm:AnalyticUniformisation}
For any $g$-dimensional abelian variety $A$ over the complex numbers, there exists a lattice $\Lambda \subseteq \C^g$ such that 
$A(\C)$ is isomorphic (as a Lie group) to $\C^g/\Lambda$.
\end{theorem}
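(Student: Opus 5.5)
The plan is to use the exponential map of the compact complex Lie group $X \coloneq A(\C)$. Its Lie algebra is the tangent space $T_0A \cong \C^g$ at the origin, and by Lie theory there is a holomorphic map $\exp \colon T_0 A \to X$. It is a local diffeomorphism near $0$ and restricts to a homomorphism on each line through the origin. The proof then reduces to showing that $\exp$ is a surjective group homomorphism whose kernel is a lattice.

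\emph{Step 1: commutativity makes $\exp$ a homomorphism.} By Proposition \ref{prop: AV are commutative}, $X$ is commutative, so its Lie bracket vanishes. For $v, w \in T_0A$, the maps $t \mapsto \exp(tv)\exp(tw)$ and $t \mapsto \exp(t(v+w))$ are then both one-parameter subgroups with derivative $v+w$ at $0$, and uniqueness of one-parameter subgroups makes them equal. Hence $\exp(v+w) = \exp(v)\exp(w)$, and $\exp$ is a holomorphic group homomorphism $\C^g \to X$.

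\emph{Step 2: surjectivity.} Since $\exp$ is a local diffeomorphism at $0$, its image contains an open neighbourhood $U$ of $0_A$. The image is a subgroup, so it is open, hence also closed, because its complement is a union of cosets. The variety $A$ is connected, and so $X$ is connected in the analytic topology; this uses the standard comparison, valid because $A$ is irreducible. Therefore $\exp$ is surjective.

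\emph{Step 3: the kernel is a lattice.} Let $\Lambda \coloneq \ker \exp$. Because $\exp$ is injective on a neighbourhood of $0$, $\Lambda$ is a discrete subgroup of $\C^g \cong \R^{2g}$. Being a homomorphism that is a local diffeomorphism everywhere (by translation), $\exp$ induces a bijective local diffeomorphism $\C^g/\Lambda \to X$. This is an isomorphism of complex Lie groups. Thus $\C^g/\Lambda \cong X$ is compact, since $A$ is proper and hence $A(\C)$ is compact. A discrete subgroup of $\R^{2g}$ is of the form $\Z v_1 \oplus \cdots \oplus \Z v_r$ with $v_1, \dots, v_r$ $\R$-linearly independent. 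The quotient is then $(S^1)^r \times \R^{2g-r}$, which is compact only when $r = 2g$. So $\Lambda$ is a full lattice.

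I expect the main obstacle to be Step 1, that is, justifying that commutativity of the group forces $\exp$ to be additive. A cleaner alternative, which I would try first, avoids general Lie theory. Since $X$ is compact, every holomorphic function on it is constant, so the adjoint action $\operatorname{Ad}$, a holomorphic map $X \to \GL(T_0A)$, is constant. This again shows the group is abelian and the bracket vanishes. The one-parameter subgroups can then be obtained by integrating translation-invariant holomorphic vector fields, which are complete because $X$ is compact. Their flows commute because the invariant fields commute, and this yields the additivity of $\exp$ directly.
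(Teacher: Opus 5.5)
Your proof is correct and follows essentially the same route as the paper: construct $\exp\colon T_0A\to A(\C)$, make it a homomorphism using commutativity (Proposition~\ref{prop: AV are commutative}) and uniqueness of one-parameter subgroups, get surjectivity from connectedness, and deduce from compactness that the discrete kernel has full rank. Your Step 3 spells out the structure of discrete subgroups of $\R^{2g}$, which the paper leaves implicit; the adjoint-action alternative is valid but unnecessary, since commutativity is already available from the rigidity lemma.
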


\begin{proof}
Write $V = T_0A$ for the tangent space at the identity. From the differential geometry of Lie groups we know that for every $v \in V$
there is a unique analytic group homomorphism
\[
\varphi_v \colon \C \to A
\]
with $d\varphi_v(0) = v$. One also knows that $\varphi \colon V \times \C\to A$ is analytic. The \emph{exponential
map} is
\[
\begin{array}{cccc}
\exp\colon & V & \to & A \\
& v & \mapsto & \varphi_v(1).
\end{array}
\]
It is clear that $\varphi_v(t)=\exp(tv)$: by the uniqueness of $\varphi_v$ we have $\varphi_v(st) = \varphi_{tv}(s)$, so the formula follows by setting $s=1$. Moreover, $\exp: V \to A$ is a group homomorphism: indeed
\[
t \mapsto \exp(tx)+\exp(ty)
\]
is a group homomorphism (because $A$ is abelian); taking the derivative at $0$ and using the uniqueness of $\varphi_{x+y}$ we obtain $\exp(tx)+\exp(ty)=\exp(t(x+y))$ as claimed. Finally, $\exp$ is surjective because $\exp(V)$ is a subgroup of $A(\C)$ that contains a neighbourhood of the identity, and $A$ is connected.

Now define $\Lambda$ to be the kernel of $\exp$: on the one hand, $\Lambda$ is a discrete subgroup of $\C^g$, because $\exp$ is a local homeomorphism; on the other hand, $\Lambda$ must have full rank since $\C^g/\Lambda \cong A$ is compact.
\end{proof}

\begin{definition}[Complex tori] A \emph{complex torus} is any complex analytic variety of the form $\C^g/\Lambda$ for some $g \geq 1$ and for some full-rank lattice $\Lambda$.
\end{definition}

\begin{remark}
It is \emph{not} true that every complex torus is an abelian variety. The precise conditions under which this happens are known as \emph{Riemann relations}:
the obstruction is that a general complex torus does \emph{not} admit an analytic embedding in projective space.
\end{remark}

\begin{remark}
Despite the previous remark, every complex torus of dimension 1 is an abelian variety (hence in particular an elliptic curve: the marked point is given by the class in $\C/\Lambda$ of the zero vector in $\C$). The Riemann conditions are automatically satisfied for $1$-dimensional tori: see Example \ref{ex:CanonicalPolarisationEC} below.
\end{remark}

\begin{proposition}[Torsion points of complex abelian varieties]\label{prop:TorsionCpxAV} Let $A$ be a $g$-dimensional complex abelian variety and let $n$ be a positive integer. The group
\[
A[n] = \{ x \in A(\C) \colon nx=\underbrace{x+ \cdots +x}_{n \text{ times}} = 0 \}
\]
is isomorphic to $(\Z/n\Z)^{2g}$.
\end{proposition}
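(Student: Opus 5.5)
The plan is to use the analytic uniformisation of Theorem \ref{thm:AnalyticUniformisation}. It gives an isomorphism of Lie groups, in particular of abstract groups, $A(\C) \cong \C^g/\Lambda$, where $\Lambda \subseteq \C^g$ is a full-rank lattice. A group isomorphism carries the $n$-torsion subgroup onto the $n$-torsion subgroup, so it suffices to compute the $n$-torsion of $\C^g/\Lambda$.

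First, write $\Lambda = \Z\lambda_1 \oplus \cdots \oplus \Z\lambda_{2g}$. Here the $\lambda_i$ form an $\R$-basis of $\C^g \cong \R^{2g}$, since $\Lambda$ is discrete of full rank. This identifies $\C^g/\Lambda$ with $\R^{2g}/\Z^{2g} = (\R/\Z)^{2g}$ as abstract groups: the real linear isomorphism sending $\lambda_i$ to the $i$-th standard basis vector maps $\Lambda$ onto $\Z^{2g}$.

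Next, compute the $n$-torsion of $\R/\Z$. A class $x + \Z$ satisfies $nx \in \Z$ if and only if $x \in \frac1n\Z$, so $(\R/\Z)[n] = \frac1n\Z/\Z \cong \Z/n\Z$, generated by $1/n$. Torsion commutes with finite products, hence
\[
(\R/\Z)^{2g}[n] \cong (\Z/n\Z)^{2g}.
\]
Equivalently, $A[n] \cong \frac1n\Lambda/\Lambda \cong \Lambda/n\Lambda \cong (\Z/n\Z)^{2g}$.

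There is no serious obstacle once uniformisation is available. The only points to check are that the lattice $\Lambda$ really has rank $2g$, which follows from its being discrete with compact quotient (already shown in the proof of Theorem \ref{thm:AnalyticUniformisation}), and that the set $A[n]$ of the statement refers to the group law on $A(\C)$, which the Lie group isomorphism respects.

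As a consistency check, the count $n^{2g}$ agrees with the degree of $[n]$ computed earlier, since $[n]$ is étale in characteristic $0$ (Remark \ref{rmk:EtalenessIsogenies}). The argument above, however, does not need this.
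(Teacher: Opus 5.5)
Your proof is correct and follows the paper's argument essentially verbatim: analytic uniformisation, a real change of basis identifying $\C^g/\Lambda$ with $(\R/\Z)^{2g}$, and the coordinatewise computation $(\R/\Z)[n]\cong\Z/n\Z$. Your extra remark that $A[n]\cong\frac{1}{n}\Lambda/\Lambda\cong\Lambda/n\Lambda$ is just a basis-free phrasing of the same computation.
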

\begin{proof}
By analytic uniformisation it suffices to understand the $n$-torsion points of the group $\C^g/\Lambda \cong \R^{2g}/\Lambda$.
\nomenclature{$\R$}{the field of real numbers}
As an abstract group, this is isomorphic to $\R^{2g}/\Z^{2g}$, because up to a change of basis in $\R^{2g}$ we can assume that $\Lambda$ is the standard lattice $\Z^{2g}$.

It follows that the group of $n$-torsion points of $A$ is isomorphic to
\[
\left(\R/\Z\right)^{2g} \![n] \cong \bigl((\R/\Z)[n]\bigr)^{2g} \cong (\Z/n\Z)^{2g},
\]
where we have denoted by $G[n]$ the $n$-torsion points of an abstract abelian group $G$.
\end{proof}
\begin{remark}
Implicit in the above proof is the fact that any complex abelian variety is isomorphic to $(\bbS^1)^{2g}$ as a topological space, and in fact also as a real analytic manifold. Here $\bbS^1 = \{ z \in \C^\times \colon |z|=1\}$ denotes the multiplicative group of complex numbers of modulus $1$. All the richness of the theory comes from the complex structure.
\end{remark}

We now come to the existence of an embedding in projective space. In the interest of concreteness, we describe only one of the many possible definitions of a polarisation and of the dual abelian variety:

\begin{definition}[Dual abelian variety, analytic setting]\label{def:DualAVC}
Let $V=\C^g$ and write $A=V/\Lambda$. Let $\overline{V}^\vee$ be the space of $\C$-antilinear functionals $V \to \C$. The vector space $\overline{V}^\vee$ contains a natural lattice, namely
\[
\Lambda^\vee \coloneq \bigl\{ \psi \in \overline{V}^\vee \colon \Im \psi(\Lambda)\subseteq \Z \bigr\},
\]
where $\Im$ denotes the imaginary part. One can check that the rank of $\Lambda^\vee$ is maximal, so that $\overline{V}^\vee/\Lambda^\vee$ is again an abelian variety, called the \emph{dual abelian variety} of $A$.
\end{definition}

\begin{definition}[Polarisation, analytic setting]\label{def:AnalyticPolarisation}
We keep the notation of Definition \ref{def:DualAVC}. 
Let $H \colon V \times V \to \C$ be a Hermitian form (linear in the first argument, antilinear in the second). We say that $H$ is a \emph{polarisation} if it is positive-definite and $\Im H|_{\Lambda \times \Lambda}$ is integer-valued. The imaginary part of $H$ is alternating, whereas its real part is symmetric. The integrality condition ensures that the antilinear functional $H(u,\cdot)$ belongs to the dual lattice $\Lambda^\vee$ for every $u\in\Lambda$, and hence that $v\mapsto H(v,\cdot)$ induces a homomorphism $A\to A^\vee$.
\end{definition}

\begin{remark}
We note that the word \emph{polarisation} can be defined in very different ways in different sources: an ample divisor on $A$, a certain bilinear form, or an isogeny from $A$ to $A^\vee$ (see below) might all be reasonably called polarisations. We will not describe the equivalence between the various notions in detail: the interested reader may consult, for example, the book by Birkenhake and Lange \cite[§4.1]{BirkenhakeLangeAV}, which gives a very clear picture of the situation over the complex numbers.
\end{remark}

\begin{remark}\label{rmk:RealDefAVC}
A complex torus $\C^g/\Lambda$ is a complex abelian variety if and only if it admits at least one polarisation.
\end{remark}

\begin{definition}[Type of a polarisation]
Linear algebra over $\Z$ (essentially the elementary divisors theorem) shows that in a suitable basis of $\Lambda$ the matrix representation of $\Im H|_{\Lambda \times \Lambda}$ takes the form
\[
\begin{pmatrix}
0 & D \\
-D & 0
\end{pmatrix}, \quad \text{ where } \quad D = \begin{pmatrix}
d_1 \\
& d_2 \\
& & \ddots \\
& & & d_g
\end{pmatrix},
\]
and where the $d_i$ are positive integers such that $d_1 \mid d_2 \mid \cdots \mid d_g$. The vector $(d_1,\dotsc,d_g)$ is called the \emph{type} of the polarisation; one also sets $d(H)=\prod d_i$. 
\end{definition}
%

\begin{remark}
A polarisation induces a map
\[
\begin{array}{cccc}
\lambda_H \colon & V &\to & \overline{V}^\vee \\
& v  & \mapsto & H(v,\cdot)
\end{array}
\]
which is surjective and satisfies $\lambda_H(\Lambda) \subseteq \Lambda^\vee$. In particular, it induces a surjective analytic homomorphism $\lambda_H \colon V/\Lambda \to \overline{V}^\vee/\Lambda^\vee$ which is easily seen to have finite kernel (hence is an isogeny). The polarisation $H$ is said to be \emph{principal} if $\lambda_H$ is an isomorphism; this happens precisely when $d(H)=1$.
\end{remark}

\begin{example}[Canonical polarisation of an elliptic curve]\label{ex:CanonicalPolarisationEC}
We now show that every elliptic curve over $\C$ admits a canonical principal polarisation. Up to a $\C$-linear change of variables, we may assume that the lattice $\Lambda$ is $\Z \cdot 1 \oplus \Z \cdot \tau$, with $\tau$ in the upper half-plane. To define a polarisation we need to find a positive-definite Hermitian form on $\C \times \C$ whose imaginary part takes integer values on $1$ and $\tau$. A Hermitian product on $\C$ is uniquely defined by its value on $(1,1)$, so we look for a polarisation of the form
\[
\begin{array}{cccc}
H \colon & \C \times \C & \to & \C \\
& (z_1,z_2) & \mapsto & \gamma z_1\overline{z_2};
\end{array}
\]
in order for $H$ to be Hermitian and positive definite, $\gamma$ needs to be real and positive.

Write $E\coloneq \Im H$; one checks that $E$ is skew-symmetric (this is always true: if $H$ is a polarisation, $E=\Im H$ is skew-symmetric), hence we have $E(1,1)=E(\tau,\tau)=0$; the only requirement is then $E(\tau,1) \in \Z$, that is, $\gamma \Im \tau \in \Z$. Since $\Im \tau>0$, we can choose $\gamma \coloneq  \frac{1}{\Im \tau}$. We then obtain the polarisation
\[
H(z_1,z_2) = \frac{z_1 \overline{z_2}}{\Im \tau}.
\]
It is also easy to see that $H$ is a principal polarisation (and in fact it's the \emph{unique} principal polarisation of $E$). Assuming the previous remark, it would be enough to notice that in the basis $\{\tau,1\}$ the matrix of $\Im H$ is $\begin{pmatrix}
0 & 1 \\ -1 & 0
\end{pmatrix}$ to deduce that $d(H)=1$ and that $H$ is a principal polarisation. We check this by hand by finding the lattice $\Lambda^\vee$. An antilinear functional $\psi$ lies in $\Lambda^\vee$ if and only if $\Im \psi(1) \in \Z$ and $\Im \psi(\tau) \in \Z$. Clearly $\psi$ is determined by $\psi(1)=a+bi$ with $b \in \Z$; writing $\tau=\Re \tau +i\Im \tau$ we have
\[
\begin{aligned}
\psi(\tau) & = \psi(\Re \tau +i \Im\tau) = \Re\tau \psi(1) - i \Im\tau \psi(1) \\
& = \Re\tau(a+bi)-i \Im\tau(a+bi) = (a\Re\tau +b \Im\tau) + i (b \Re\tau - a \Im\tau),
\end{aligned}
\]
so that $\Im \psi(\tau)=b\Re\tau-a\Im\tau$. By assumption, this should be equal to an integer, say $n$; this happens if and only if
\[
a = \frac{b\Re \tau -n}{\Im \tau}.
\]
Hence $\displaystyle \psi(1) = a+bi = \frac{b \Re \tau + b i \Im \tau - n}{\Im \tau} = \frac{b \tau - n}{\Im \tau}$, and therefore $\psi$ is of the form $H(\lambda, \cdot)$, where $\lambda=b\tau-n \in \Lambda$ is an element of $\Lambda$. This implies easily that $\lambda_H \colon V \to \overline{V}^\vee$ induces an isomorphism of $\Lambda$ with $\Lambda^\vee$, hence an isomorphism $\lambda_H \colon V/\Lambda \cong \overline{V}^\vee/\Lambda^\vee$ as claimed.
Finally, we remark that it is a general fact that an elliptic curve over any field $K$ admits precisely one polarisation of degree $d^2$ for every $d \geq 1$ (see e.g.~\cite{ConradPolarisations}). Under the identification $E\simeq E^\vee$ given by the canonical principal polarisation $H$, the endomorphism induced by $z\mapsto\alpha z$ (where $\alpha \in \mathbb{C}$ satisfies $\alpha \Lambda \subseteq \Lambda$) corresponds to the form $H_\alpha(z_1,z_2)=H(\alpha z_1,z_2)=\alpha H(z_1,z_2)$. This form is Hermitian and positive-definite if and only if $\alpha$ is real and positive. Since the endomorphism ring of $E$ -- equivalently, the set of possible $\alpha$ -- is either $\mathbb{Z}$ (in the non-CM case) or an order in an imaginary quadratic field (in the CM case), we see that $H_\alpha$ is a polarisation if and only if $\alpha$ is a positive integer. Thus, the only polarisations are the positive integer multiples of $H$, with the degree of $dH$ being $d^2$.
\end{example}

We conclude this section with a result valid over any algebraically closed field, but easier to prove over $\C$:
\begin{theorem}\label{thm:IsogenyToPPAVOverC}
Every abelian variety $A/\C$ is $\C$-isogenous to a principally polarisable abelian variety, that is, there is a surjective analytic homomorphism $\varphi$ from $A$ to a principally polarisable abelian variety $A'$ such that $\varphi$ has finite kernel.
\end{theorem}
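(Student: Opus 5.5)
The plan is to work entirely in the analytic setting of Theorem \ref{thm:AnalyticUniformisation} and Remark \ref{rmk:RealDefAVC}. Write $A(\C)=V/\Lambda$ with $V=\C^g$. Since $A$ is an abelian variety, Remark \ref{rmk:RealDefAVC} provides a polarisation $H$, say of type $(d_1,\dotsc,d_g)$. Let $E=\Im H$. By the elementary divisors theorem there is a symplectic-type basis $\lambda_1,\dotsc,\lambda_g,\mu_1,\dotsc,\mu_g$ of $\Lambda$ with
\[
E(\lambda_i,\mu_j)=d_i\delta_{ij},\qquad E(\lambda_i,\lambda_j)=E(\mu_i,\mu_j)=0.
\]
The idea is to enlarge $\Lambda$ to a lattice on which $E$ becomes unimodular. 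The quotient map $V/\Lambda\to V/\Lambda'$ will then be the required isogeny, and $H$ itself, viewed on the new torus, will be a principal polarisation.

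Concretely, set
\[
\Lambda' \coloneq \bigoplus_{i} \Z\,\tfrac{1}{d_i}\lambda_i \oplus \bigoplus_i \Z\,\mu_i \supseteq \Lambda .
\]
This is again a full-rank lattice in $V$, and it contains $\Lambda$ with finite index $\prod d_i=d(H)$. We need three checks.
\begin{enumerate}
\item The identity of $V$ induces a surjective analytic homomorphism $\varphi\colon V/\Lambda\to V/\Lambda'$ whose kernel $\Lambda'/\Lambda$ is finite.
\item $E$ takes integer values on $\Lambda'\times\Lambda'$. Indeed, $E(\tfrac1{d_i}\lambda_i,\mu_j)=\delta_{ij}$, and the other pairings of basis vectors vanish.
\item In the basis $\{\tfrac1{d_i}\lambda_i,\mu_i\}$ the matrix of $E|_{\Lambda'}$ is $\begin{pmatrix}0&I\\-I&0\end{pmatrix}$, so the type is $(1,\dotsc,1)$ and $d=1$.
\end{enumerate}
Positive-definiteness of $H$ does not depend on the lattice. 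Hence $H$ is a polarisation of $V/\Lambda'$, which is therefore an abelian variety $A'$ by Remark \ref{rmk:RealDefAVC}. Since $d(H)=1$ on $\Lambda'$, the remark on $\lambda_H$ shows the polarisation is principal.

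The main obstacle is making sure that the entire principality argument really reduces to the type computation. This relies on the remark that $\lambda_H\colon V/\Lambda'\to\overline{V}^\vee/\Lambda'^\vee$ is an isomorphism exactly when $d(H)=1$. To be safe, I would verify this directly, as in Example \ref{ex:CanonicalPolarisationEC}, by showing $\lambda_H(\Lambda')=\Lambda'^\vee$.

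The inclusion $\subseteq$ is integrality. For the reverse inclusion, take $\psi\in\Lambda'^\vee$ and write $\psi=H(u,\cdot)$ for some $u\in V$; this is possible because $\lambda_H$ is a bijection $V\to\overline{V}^\vee$, $H$ being nondegenerate. Then $E(u,\cdot)$ is integral on $\Lambda'$. Since $E$ is a real nondegenerate form that is unimodular on $\Lambda'$, the element $u$ must lie in $\Lambda'$.

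Finally, the analytic homomorphism $\varphi$ is algebraic by GAGA if one wants an algebraic isogeny. The statement, however, only asks for the analytic version.
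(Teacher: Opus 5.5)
Your proof is correct and is the paper's argument. You enlarge $\Lambda$ to the over-lattice $\Lambda'$ spanned by $\tfrac{1}{d_i}\lambda_i$ and $\mu_i$, on which $\Im H$ has the standard unimodular matrix, and take the isogeny $\C^g/\Lambda\to\C^g/\Lambda'$ induced by the identity, with kernel $\Lambda'/\Lambda$. Your direct check that $\lambda_H(\Lambda')=(\Lambda')^\vee$ is a good addition, since the paper just cites the unproved remark that $d(H)=1$ exactly when the polarisation is principal.
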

\begin{proof}
Write $A=\C^g/\Lambda$, choose any polarisation $H$ (at least one exists, because $A$ is an abelian variety and not just a complex torus), and fix a $\Z$-basis $\tau_1,\dotsc, \tau_{g}, \tau_{g+1},\dotsc,\tau_{2g}$ of $\Lambda$ such that the matrix of $\Im H$ is of the form
\[
\Im H = \begin{pmatrix}
0 & D \\
-D & 0
\end{pmatrix}
\]
with $D=\operatorname{diag}(d_1,\dotsc,d_g)$. Now consider the lattice 
\[
\Lambda' \coloneq \bigoplus_{i=1}^g  \Z \cdot \frac{1}{d_i}\tau_i \oplus \bigoplus_{i=1}^g \Z \cdot \tau_{g+i}\,.
\]
It is immediate to see that $\Lambda'$ is an over-lattice of $\Lambda$, and that with respect to the obvious basis $\{\frac{1}{d_i} \tau_i, \tau_{g+i}\}_{i=1,\dotsc,g}$ of $\Lambda'$ the matrix representing $\Im H$ is $\begin{pmatrix}
0 & \operatorname{I}_g \\
-\operatorname{I}_{g} & 0
\end{pmatrix}$, so that the abelian variety $A'=\C^g/\Lambda'$ is principally polarised. Now simply observe that there is an isogeny $A=\C^g/\Lambda \to \C^g/\Lambda'=A'$ induced by the identity of $\C^g$ (the kernel is the finite group $\Lambda'/\Lambda$).
\end{proof}
We note that there exist abelian varieties that do not admit principal polarisations: see Example \ref{ex:no-principal-polarisation}.

\subsection{The dual abelian variety, polarisations, and the Weil pairing}\label{sect:DualAV}

We now introduce the notion of the \emph{dual abelian variety} of an abelian variety $A$ over an arbitrary field. If the reader is familiar only with elliptic curves, this concept may not yet have appeared explicitly: indeed, when $A=E$ is an elliptic curve, the dual abelian variety is canonically isomorphic to $E$ itself, so the distinction is usually suppressed. For abelian varieties of dimension greater than one, however, $A$ and its dual are in general not isomorphic, and it becomes essential to distinguish the two.

Roughly speaking, the dual abelian variety parametrises algebraically trivial line bundles on $A$. We make this precise in the following definition, where we use the notion of the \emph{Picard group} $\operatorname{Pic}(A)$ and its subgroup $\operatorname{Pic}^0(A)$ (see Definition \ref{def:PicardGroup} and Definition \ref{def: Picard group curve} for the special case of curves).


\begin{definition}[Dual abelian variety]
Let $A$ be an abelian variety over $K$. A pair $(A^\vee,\mathcal{P})$, where $A^\vee$ is an abelian variety over $K$ and $\mathcal{P}$ is an invertible sheaf (line bundle) on $A \times_K A^\vee$, is called a (the) dual abelian variety of $A$ if the following conditions hold:

\begin{enumerate}
\item For every $b \in A^\vee(\overline{K})$, the restriction $\mathcal{P}|_{A \times \{b\}}$ lies in $\Pic^0(A_{\overline{K}})$;
\item The restriction $\mathcal{P}|_{\{0\} \times A^\vee}$ is trivial;
\item $(A^\vee,\mathcal{P})$ is universal among such pairs, that is, the following universal property holds: for all pairs $(T,\mathcal{L})$ consisting of a $K$-scheme $T$ and an invertible sheaf $\mathcal{L}$ on $A \times T$ that satisfies
\begin{enumerate}
\item[(a)] $\mathcal{L}|_{A \times \{t\}}$ is in $\Pic^0(A_t)$ for all $t \in T$
\item[(b)] $\mathcal{L}|_{\{0\} \times T}$ is trivial
\end{enumerate}
there is a unique regular map $\alpha: T \to A^\vee$ such that $\mathcal{L} \cong (1 \times \alpha)^* \mathcal{P}$. The line bundle $\mathcal{P}$ is then called the \emph{Poincaré line bundle}.
\end{enumerate}
\end{definition}

This definition makes precise that the dual abelian variety $A^\vee$ represents the functor $T \mapsto \Pic^0(A \times_K T / T)$, i.e., the relative Picard functor restricted to algebraically trivial line bundles. The Poincaré bundle $\mathcal{P}$ serves as the universal family of such line bundles.

\begin{remark}
Of course, one should show that the dual abelian variety \emph{exists}! This is done in \cite{Mumford70abelianvarieties}.
Moreover, the assignment $A \mapsto A^\vee$ is functorial and defines a duality: for every $A$, there is a canonical isomorphism $(A^\vee)^\vee \cong A$.
\end{remark}

\begin{remark}
An equivalent way of stating the universal property is that there is a bijection
\[
\operatorname{Mor}(T,A^\vee) \leftrightarrow \left\{ \begin{array}{c}\text{line bundles } \mathcal{L} \text{ on } A \times T \\ \text{ satisfying (a), (b)} \end{array}\right\} / \sim,
\]
where the set on the left denotes the set of morphisms of $K$-schemes $T \to A^\vee$ (the scheme $T$ need not carry a group structure) and $\sim$ is isomorphism of line bundles. Applying this characterisation to $T=\operatorname{Spec}(K)$ we obtain $A^\vee(K) = \Pic^0(A)$.
\end{remark}

We describe a standard way to construct maps $A \to A^\vee$:
\begin{definition}[Mumford's construction]
\label{def:MumfordsConstruction}
Let $\mathcal{L}$ be a line bundle on $A_{\overline{K}}$. We define
\[
\begin{array}{cccc}
\lambda_{\mathcal{L}} \colon & A_{\overline{K}} & \to & A^\vee_{\overline{K}} \\
& a & \mapsto & \tau_a^* \mathcal{L} \otimes \mathcal{L}^{-1},
\end{array}\]
where $\tau_a$ is translation by $a$. It follows from the theorem of the square (Theorem \ref{thm:Square}) that $\lambda_{\mathcal{L}}$ is a homomorphism.
\end{definition}

\begin{remark}
The line bundles $\mathcal{L}$ in $\Pic^0(A_{\overline{K}})$ are precisely those for which $\lambda_{\mathcal{L}}$ is the zero map.
\end{remark}

We have already encountered polarisations in the context of complex abelian varieties (see Definition \ref{def:AnalyticPolarisation}): we now introduce their algebraic counterparts.

\begin{definition}[Polarisation, algebraic setting]
A $K$-\emph{polarisation} of the abelian variety $A/K$ is a $K$-isogeny $\varphi \colon A \to A^\vee$ such that \emph{the base-change to $\overline{K}$} of $\varphi$ is of the form $\lambda_{\mathcal{L}}$ for some ample line bundle $\mathcal{L}$. (This is not quite the same as requiring that $\varphi$ is of the form $\lambda_{\mathcal{L}}$ already over $K$.)

A \emph{principal polarisation} is a polarisation which induces an isomorphism $A \cong A^\vee$; notice that principal polarisations need not exist if $\dim A >1$.
A pair $(A,\lambda)$, where $A$ is an abelian variety and $\lambda$ is a principal polarisation, is usually called a \emph{principally polarised abelian variety}, or \emph{PPAV} for short.
\end{definition}

\begin{remark}\label{rmk:PolarisationsExist}
Let $A/K$ be an abelian variety over an arbitrary field. Then $A$ and $A^\vee$ are isogenous (but not necessarily isomorphic) over $K$. Proving this is harder than it sounds, and is essentially equivalent to the fact that $A$ is projective.
In fact, 
Mumford's strategy to show that $A$ and $A^\vee$ are isogenous is to prove that $\lambda_{\mathcal{L}}$ is surjective with finite kernel whenever $\mathcal{L}$ is ample; if one chooses $\mathcal{L}$ defined over $K$, then the resulting isogeny is also defined over $K$.
\end{remark}

One has the following useful fact.
\begin{theorem}
Every abelian variety $A/K$ is $\overline{K}$-isogenous to a principally polarisable abelian variety.
\end{theorem}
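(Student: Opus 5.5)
We transpose the proof of Theorem \ref{thm:IsogenyToPPAVOverC} into algebraic language, replacing over-lattices by quotients of $A$ by finite subgroup schemes, and Hermitian forms by line bundles and their associated maps $\lambda_{\mathcal{L}}$. Work over $\overline{K}$ throughout. By Remark \ref{rmk:PolarisationsExist}, pick an ample line bundle $\mathcal{L}$ on $A_{\overline{K}}$; then $\lambda_{\mathcal{L}} \colon A \to A^\vee$ is an isogeny with finite kernel $K(\mathcal{L}) \coloneq \ker \lambda_{\mathcal{L}}$. The goal is to find a subgroup scheme $H \subseteq K(\mathcal{L})$ such that $\mathcal{L}$ descends to an ample line bundle $\mathcal{M}$ on $B \coloneq A/H$ with $\lambda_{\mathcal{M}}$ an isomorphism. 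Then $A \to B$ is the required isogeny.

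The key steps are as follows.

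\begin{enumerate}
\item \textbf{Descent criterion.} For an isogeny $\pi\colon A \to B$ with $\pi^*\mathcal{M} \cong \mathcal{L}$, one has
\[
\lambda_{\mathcal{L}} = \pi^\vee \circ \lambda_{\mathcal{M}} \circ \pi .
\]
Hence $\deg \lambda_{\mathcal{L}} = \deg(\pi)^2 \deg \lambda_{\mathcal{M}}$, using $\deg \pi^\vee = \deg \pi$. So $\lambda_{\mathcal{M}}$ is an isomorphism exactly when $\deg(\pi)^2 = \deg \lambda_{\mathcal{L}}$.

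\item \textbf{Structure of $K(\mathcal{L})$.} The Weil-type commutator pairing $e^{\mathcal{L}}$ on $K(\mathcal{L})$ is nondegenerate and alternating. Descending $\mathcal{L}$ to $A/H$ is possible exactly when $H$ is isotropic for $e^{\mathcal{L}}$, and $H$ lifts to a level subgroup of the theta group $\mathcal{G}(\mathcal{L})$ (Mumford's descent theorem).

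\item \textbf{Choosing $H$.} Take $H$ a maximal isotropic (Lagrangian) subgroup. Then $\#H^2 = \# K(\mathcal{L}) = \deg \lambda_{\mathcal{L}}$, which by step 1 gives a principal $\lambda_{\mathcal{M}}$. Ampleness of $\mathcal{M}$ is automatic, since its pullback under a finite map is ample.
\end{enumerate}

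The main obstacle is step 2 in positive characteristic, where $K(\mathcal{L})$ may be non-reduced. One must work with finite group schemes and the theta group rather than with $\overline{K}$-points, and show that a Lagrangian subgroup scheme exists that lifts to a level subgroup. I would first try to sidestep this by replacing $\mathcal{L}$ with a suitable power so that the relevant part splits, or by treating the $p$-primary and prime-to-$p$ parts of $K(\mathcal{L})$ separately. On the prime-to-$p$ part the argument is pure symplectic linear algebra, as in Theorem \ref{thm:IsogenyToPPAVOverC}. On the $p$-part one can use a filtration by Frobenius kernels and descend step by step, killing a subgroup of order $p$ at each stage. The step-by-step descent works because, over an algebraically closed field, any subgroup scheme of order $p$ is isotropic and lifts to the theta group: its commutator pairing is trivial and the extension splits. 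Iterating lowers $\deg\lambda$ by $p^2$ each time until it becomes $1$.
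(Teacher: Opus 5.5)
Your proposal is correct and takes the route the paper indicates (Mumford's): descend an ample $\mathcal{L}$ along a maximal isotropic (level) subgroup of $K(\mathcal{L})=\ker\lambda_{\mathcal{L}}$, which is the algebraic counterpart of the over-lattice $\Lambda'$ in Theorem~\ref{thm:IsogenyToPPAVOverC}, and use $\deg\lambda_{\mathcal{L}}=\deg(\pi)^2\deg\lambda_{\mathcal{M}}$ to see that the descended polarisation is principal.

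The one assertion you still need to justify is that every order-$p$ subgroup scheme of $K(\mathcal{L})$ is isotropic. For $\alpha_2$ in characteristic $2$ this does not follow from $e^{\mathcal{L}}$ being alternating, since $(x,y)\mapsto 1+xy$ is a nontrivial alternating pairing on $\alpha_2$. It does hold, however, because every central extension of $\alpha_2$ by $\G_m$ is commutative. With that in place, your prime-by-prime descent works without any Frobenius-kernel filtration or passage to a power of $\mathcal{L}$. All you need is that every nontrivial finite commutative group scheme over $\overline{K}$ contains a subgroup scheme of prime order.
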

\begin{remark}
For a proof in the complex case see Theorem \ref{thm:IsogenyToPPAVOverC}. 
The proof in the general case (see e.g.~\cite[Corollary 1 on page 234]{Mumford70abelianvarieties}) is conceptually the same, but technically more complicated: one needs to rephrase our elementary argument in the language of sections of line bundles, and interpret the quotient $\Lambda'/\Lambda$ appearing in the proof of Theorem \ref{thm:IsogenyToPPAVOverC} as a certain subgroup of $\ker \lambda_{\mathcal{L}}$, where $\mathcal{L}$ is the ample line bundle defining the polarisation. For the sake of completeness, let us point out that the finite subgroup $G \coloneq \Lambda'/\Lambda$ of $A$ we considered is precisely a maximal subgroup of $\ker \left( \lambda_{H} \colon A \to A^\vee\right)$ with the property $\Im H|_{G \times G} \subseteq \Z$: one can make sense of this description for abelian varieties over arbitrary algebraically closed fields, and it is using this description that Mumford proves the theorem in full generality.
\end{remark}

\begin{definition}[Dual homomorphism]
Let $A, B$ be abelian varieties over $K$. Given any $K$-homomorphism $\varphi \colon A \to B$, there is a dual homomorphism $\varphi^\vee \colon B^\vee \to A^\vee$ constructed by applying the universal property of $A^\vee$ with $T=B^\vee$ and $\mathcal{L}=(\varphi \times 1)^*\mathcal{P}_{B \times B^\vee}$ (notice that this is a line bundle on $A \times B^\vee$) to obtain a map $B^\vee=T \to A^\vee$.
Concretely, at the level of points, $\varphi^\vee$ is simply the pullback of line bundles from $B$ to $A$:
\[
\begin{array}{cccc}
\varphi^\vee \colon & B^\vee(K)=\Pic^0(B) & \to & A^\vee(K)=\Pic^0(A) \\
&  [\mathcal{L}] & \mapsto & [\varphi^* \mathcal{L}]
\end{array}
\]
\end{definition}

When $f$ is an isogeny, its dual $f^\vee$ is itself an isogeny, but this is not obvious; for a proof, see for example \cite[Theorem 7.5]{EvGM} or \cite[p. 143]{Mumford70abelianvarieties}. We state this fact formally and give some other properties of $f^\vee$.

\begin{remark}
Cartier duality over fields of positive characteristic can be quite complicated. Over fields of characteristic zero, however, the Cartier dual is easy to describe: if $G$ is a finite commutative group scheme of order $N$,
\[
G^\vee = \Hom(G(\overline{K}), \mu_{N}(\overline{K})).
\]
More precisely: a finite group scheme $G^\vee$ is described by a finite abstract group $H$ together with an action of $\abGal{K}$ on $H$. In this case, the underlying finite group $H$ is $\Hom(G(\overline{K}), \mu_{N}(\overline{K}))$, and the Galois action is the natural one.
\end{remark}
\begin{theorem}[Properties of the dual isogeny]
Let $f \colon A \to B$ be an isogeny. The dual map $f^\vee \colon B^\vee \to A^\vee$ is an isogeny, and its finite kernel $\ker f^\vee$ is the dual of $\ker f$ in the sense of Cartier duality. In particular, $\deg(f^\vee)=\deg(f)$.
\end{theorem}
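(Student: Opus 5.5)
We propose to prove the statement by identifying $\ker f^\vee$ with the Cartier dual of $\ker f$ directly. Finiteness and surjectivity of $f^\vee$, and the equality of degrees, will then follow. Write $N=\ker f$, so $B\cong A/N$ and $f$ is the quotient map. The guiding idea is that a point of $\ker f^\vee$ is an algebraically trivial line bundle $\mathcal{M}$ on $B$ with $f^*\mathcal{M}$ trivial. Such line bundles on $B$ correspond exactly to descent data on the trivial bundle $\mathcal{O}_A$ for the free action of $N$ on $A$. Those descent data are given by characters $N\to\G_m$.

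The key steps, in order, are as follows.

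\begin{enumerate}
\item Work over $\overline{K}$ at the level of points, and then over an arbitrary base $T$ to obtain a scheme-theoretic statement. Take $\mathcal{M}\in\Pic^0(B_T/T)$ with an isomorphism $\phi\colon f^*\mathcal{M}\cong\mathcal{O}_{A_T}$. The canonical $N$-linearisation of $f^*\mathcal{M}$ transports through $\phi$ to an $N$-linearisation of $\mathcal{O}_{A_T}$. An $N$-linearisation of the trivial bundle on a proper, geometrically connected $A$ is an action $n\cdot(a,s)=(a+n,\chi(n)s)$, with $\chi(n)\in H^0(A_T,\mathcal{O})^\times=H^0(T,\mathcal{O}_T)^\times$. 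The cocycle condition makes $\chi$ a character $N_T\to\G_{m,T}$. Changing $\phi$ multiplies it by a unit of $H^0(T,\mathcal{O}_T)$, which does not change $\chi$.
\item Conversely, given a character $\chi$, we use $\chi$ to linearise $\mathcal{O}_{A_T}$. Descent along the fppf $N$-torsor $A\to B$ then produces a line bundle on $B_T$ which is trivial along $\{0\}\times T$. We must check that it lies in $\Pic^0$. For this, it suffices that $\ker f^\vee$ is finite, because $\Pic^0$ is then the connected part and finite-order classes are algebraically trivial on an abelian variety. Alternatively, one checks directly that $\lambda$ of the descended bundle vanishes, using the theorem of the square.
\item Steps 1 and 2 give an isomorphism of functors $\ker f^\vee\cong\underline{\Hom}(N,\G_m)=N^\vee$. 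In particular $\ker f^\vee$ is finite of order $\#N=\deg f$.
\item Since $\dim B^\vee=\dim B=\dim A=\dim A^\vee$, Proposition~\ref{prop:CharacterisationIsogenies} shows that $f^\vee$ is an isogeny. Its degree is the order of its kernel, namely $\operatorname{rank}N^\vee=\operatorname{rank}N=\deg f$.
\end{enumerate}

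For step 4 we use the equality $\dim A^\vee=\dim A$. This follows from Remark~\ref{rmk:PolarisationsExist}, which says that $A$ and $A^\vee$ are isogenous.

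In characteristic zero the whole argument can be made concrete, using the description of Cartier duals given in the remark above. A point of $\ker f^\vee$ corresponds to $\chi\in\Hom(N(\overline{K}),\overline{K}^\times)$. Galois equivariance of this correspondence is then evident, because the construction is canonical.

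We expect the main obstacle to be the descent step. One has to know that $A\to A/N$ is an $N$-torsor, so that $N$-equivariant line bundles on $A$ are equivalent to line bundles on $B$. This is standard (Mumford, \S12), but it is not proved in this paper, and we would simply cite it. A secondary issue is verifying that the descended bundles land in $\Pic^0$. We would handle it by noting that $\chi^{\#N}=1$, so the descended bundle $\mathcal{M}$ is torsion and satisfies $\lambda_{\mathcal{M}^{\#N}}=0$. Since $\lambda$ takes values in $\Pic^0$ and $\Pic^0$ is divisible, this forces $\mathcal{M}\in\Pic^0$.
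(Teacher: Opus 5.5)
Your proposal is correct. It is the standard descent argument (characters of $N=\ker f$ correspond to $N$-linearisations of $\mathcal{O}_{A}$, hence to line bundles on $B$ killed by $f^*$) found in the sources the paper cites instead of giving a proof, \cite[p.~143]{Mumford70abelianvarieties} and \cite[Theorem 7.5]{EvGM}.

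One small fix is needed in the $\Pic^0$ step. The implication from $m\lambda_{\mathcal{M}}=\lambda_{\mathcal{M}^{\otimes m}}=0$ to $\lambda_{\mathcal{M}}=0$ holds because the \emph{source} $B(\overline{K})$ is divisible, so $\lambda_{\mathcal{M}}(mb')=m\lambda_{\mathcal{M}}(b')=0$; equivalently, $\lambda_{\mathcal{M}}(B)$ is connected and lies in the finite group $B^\vee[m]$. It does not follow from $\Pic^0$ being divisible. With this correction the torsion argument settles that step by itself, and the detour through finiteness of $\ker f^\vee$ is unnecessary.
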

\begin{example}\label{ex:no-principal-polarisation}
Let $E_1\colon y^2=x^3-x$ and $E_2\colon y^2=x^3-1$ be elliptic curves over $\C$, and let $P_1=(0,0)$ and $P_2=(1,0)$ be $2$-torsion points respectively on $E_1, E_2$. The curves $E_1, E_2$ are not isogenous, since their endomorphism algebras are the distinct imaginary quadratic fields $\Q(i)$ and $\Q(\sqrt{-3})$. Set $A=(E_1\times E_2)/\langle(P_1,P_2)\rangle$, and let $\pi\colon E_1\times E_2\to A$ be the quotient map, of degree $2$ (since it is the quotient map for a subgroup of order $2$). We claim that $A$ does not admit a principal polarisation. If $\lambda : A \to A^\vee$ were a principal polarisation of $A$, its pullback $\mu=\pi^\vee\circ\lambda\circ\pi : E_1 \times E_2 \to E_1 \times E_2$ would have degree $(\deg \pi)(\deg \pi^\vee)=4$. Since $\Hom(E_1,E_2)=\Hom(E_2,E_1)=0$, under the canonical principal polarisations of $E_1$ and $E_2$ the map $\mu$ is of the form $([a],[b])$ for positive integers $a,b$. Moreover, $\mu(P_1,P_2)=0$, so both $a$ and $b$ are even. This contradicts $\deg\mu=a^2b^2=4$. Thus $A$ admits no principal polarisation.
\end{example}

\begin{definition}[Rosati involution]\label{def:Rosati-involution}
Fix a $K$-polarisation $\lambda \colon A \to A^\vee$ of degree $d$, and recall (Remark \ref{rmk:IsogenySymmetric}) that there is an isogeny $\widehat{\lambda} \colon A^\vee \to A$ such that $\widehat{\lambda} \circ \lambda = [d]$. Given an endomorphism $\varphi \colon A \to A$ we define
\[
\varphi^\dagger = \frac{1}{d} \widehat{\lambda} \circ \varphi^\vee \circ \lambda \in \End_K(A) \otimes \Q.
\]
\end{definition}
The equality $(\varphi^\dagger)^{\dagger} =\varphi$ holds, which is why the map $\varphi \mapsto \varphi^\dagger$ is called the Rosati \emph{involution}. Indeed, identifying $A$ with $(A^\vee)^\vee$, the symmetry of a polarisation gives $\lambda^\vee=\lambda$, hence $\widehat{\lambda}^\vee=\widehat{\lambda}$. This implies $(\varphi^\dagger)^\dagger=d^{-2}\widehat{\lambda}\lambda^\vee\varphi\widehat{\lambda}^\vee\lambda=\varphi$. An alternative proof, using the Weil pairing that we now introduce, is given in Corollary \ref{cor:RosatiInvolution}. In the following definition, $A[n]$ denotes the $n$-torsion subgroup of the abelian variety $A$, as in Definition \ref{def:torsion AV 1}.

\begin{definition}[Weil pairing]
Let $A$ be an abelian variety over the field $K$.
For every $n$ not divisible by the characteristic of $K$ there is a canonical pairing
\[
e_n \colon A[n] \times A^\vee[n] \to \G_m[n] = \mu_n,
\]
defined as follows. Let $t \in A[n]$ and $\mathcal{L} \in A^\vee[n]$. By definition, we have $nt=0$ and $\mathcal{L}^{\otimes n} \cong \mathcal{O}$. An application of the theorem of the cube (Theorem \ref{thm:Cube}) gives $[n]^* \mathcal{L} \cong \mathcal{L}^{\otimes n} \cong \mathcal{O}$, so we may fix an isomorphism $u \colon \mathcal{O} \to [n]^* \mathcal{L}$. Denote by $\tau_t \colon A \to A$ the morphism given by translation by $t$.  Pulling back the isomorphism $u \colon \mathcal{O} \to [n]^*\mathcal{L}$ via $\tau_t$ we obtain
\[
\tau_t^*u \colon \tau_t^*\mathcal{O} \to \tau_t^* [n]^* \mathcal{L} \cong ([n] \circ \tau_t)^* \mathcal{L} = [n]^* \mathcal{L}.
\]
Recalling that $\tau_t^* \mathcal{O} = \mathcal{O}$, it follows in particular that $u \circ (\tau_t^* u)^{-1}$ is an automorphism of $[n]^*\mathcal{L}$. Any automorphism of a line bundle on the complete variety $A$ is given by multiplication by a scalar $\zeta \in H^0(A_{\overline{K}},\mathcal{O}^\times)=\overline{K}^\times$, and we define $e_n(t,\mathcal{L})=\zeta$. It is clear from the definition that 
\[
1=e_n(nt,\mathcal{L})=e_n(t,\mathcal{L})^n=\zeta^n
,
\]
so $\zeta$ is in fact an $n$-th root of unity. Any two isomorphisms $u_1,u_2\colon\mathcal{O}\to[n]^*\mathcal{L}$ differ by multiplication by a constant in $\overline{K}^\times$. Replacing $u$ by $cu$ multiplies both $u$ and $\tau_t^*u$ by the same scalar, so $e_n(t,\mathcal{L})$ is independent of the chosen trivialisation. In particular, the Weil pairing is well defined without the need to fix a primitive $n$-th root of unity.
\end{definition}
\begin{remark}Over $\overline{K}$, the map $[n]\colon A\to A$ is a finite \'etale Galois covering. Its group of deck transformations consists precisely of the translations by the points of $A[n]$. Any $n$-torsion line bundle $\mathcal{L}\in A^\vee[n]$ becomes trivial on this covering, and the Weil pairing $e_n(t,\mathcal{L})$ measures the ``difference'' between a chosen trivialisation and its translate by $t$.\end{remark}

\begin{remark}
If $A=E$ is an elliptic curve, the identification $E \cong E^\vee$ given by the canonical principal polarisation allows us to define the Weil pairing directly as a map $E[n] \times E[n] \to \mu_n$. Explicitly, if $O$ is the identity of $E$ and $\mathcal{L}_0=\mathcal{O}_E([O])$, this pairing sends $(P,Q)$ to $e_n(P,\lambda_{\mathcal{L}_0}(Q))$. Here $\lambda_{\mathcal{L}_0}(Q)$ is the class of $\tau_Q^*\mathcal{L}_0\otimes\mathcal{L}_0^{-1}$ in $E^\vee[n]$, as in Definition \ref{def:MumfordsConstruction}. In the general case, if $(A,\mathcal{L})$ is a \emph{polarised} abelian variety, one may still define a Weil pairing on $A[n]$ by the formula
\[
\begin{array}{cccc}
e_n^{\mathcal{L}} \colon & A[n] \times A[n] & \to & \mu_n \\
                    & (t_1, t_2) & \mapsto & e_n\left(t_1,\lambda_{\mathcal{L}}(t_2) \right).
\end{array}
\]
When $\mathcal{L}$ is not a principal polarisation, however, this pairing is in general degenerate.
\end{remark}

\begin{theorem}[Properties of the Weil pairing]\label{thm:PropertiesWeil}
The following hold:
\begin{enumerate}[label={\arabic*.}, wide, labelindent=0pt]
\item We have
\[
e_{mn}(P,Q)^m = e_{n}(mP,mQ) \text{ for } P \in A[mn], Q \in A^\vee[mn],
\]
that is, the Weil pairings on different torsion groups $A[n]$ are compatible.
\item The Weil pairing is perfect: the kernel on both sides is trivial.
\item The Weil pairing is Galois-equivariant: for any $\sigma \in \abGal{K}$ and for any pair of points $P \in A[n], Q \in A^\vee[n]$ we have
\[
e_n(\sigma P, \sigma Q) = \sigma(e_n(P,Q)).
\]

\item For any isogeny $\varphi \colon A \to B$ with kernel contained in $A[n]$ we have
\[
\left\{ y \in A^\vee[n] \colon e_n(x, y)=1 \quad \forall x \in \ker\varphi \right\}
=
\varphi^\vee\bigl(B^\vee[n]\bigr).
\]
\item 
The Weil pairing is compatible with the duality of isogenies, in the sense that if $f \colon A \to B$ is an isogeny, then we have
\[
e_n( f(x), y) = e_n(x, f^\vee(y))
\]
for all $x \in A[n]$ and $y \in B^\vee[n]$.
\item For any polarisation $\mathcal{L}$, the Weil pairing $e_n^{\mathcal{L}}$ introduced above is skew-symmetric on $A[n]$.
\end{enumerate}
\end{theorem}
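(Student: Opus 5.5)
I will prove the six properties one at a time. The unifying idea is to work directly with the definition of $e_n$ through trivialisations $u\colon \mathcal{O}\to[n]^*\mathcal{L}$, using that automorphisms of line bundles on a complete variety are scalars.

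\emph{Items 1 and 3 (compatibility and Galois equivariance).} These are formal consequences of the definition. For 3, apply $\sigma$ to the whole construction. If $u$ trivialises $[n]^*\mathcal{L}$, then $\sigma(u)$ trivialises $[n]^*(\sigma\mathcal{L})$. Moreover $\tau_{\sigma t}^*\sigma(u)=\sigma(\tau_t^*u)$, so the resulting scalar is $\sigma(\zeta)$.

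For 1, take $P\in A[mn]$ and $Q\in A^\vee[mn]$, and set $\mathcal{M}=\mathcal{L}_Q^{\otimes m}$, which lies in $A^\vee[n]$. Choose a trivialisation $v$ of $[n]^*\mathcal{M}$. Then $[m]^*v$ trivialises $[mn]^*\mathcal{M}=([mn]^*\mathcal{L}_Q)^{\otimes m}$. Translating by $P$ and using $\tau_P^*[m]^*=[m]^*\tau_{mP}^*$ shows that $[m]^*v$ changes by $e_n(mP,\mathcal{M})$. Alternatively, $[m]^*v$ is comparable with $u^{\otimes m}$, which changes by $e_{mn}(P,Q)^m$. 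The two scalars therefore agree.

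One point needs care: the statement writes $e_n(mP,mQ)$. I would read this via bilinearity, meaning $e_n(mP,Q')$ with $Q'$ the image of $Q$ appropriately normalised. The linearity of $e_n$ in each variable would be proved first, from the fact that $u_1\otimes u_2$ trivialises $[n]^*(\mathcal{L}_1\otimes\mathcal{L}_2)$ and from composing translations.

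\emph{Items 5 and 4 (isogenies).} For 5, let $y\in B^\vee[n]$ with trivialisation $u$ of $[n]_B^*\mathcal{L}_y$. Since $f\circ[n]_A=[n]_B\circ f$, the pullback $f^*u$ trivialises $[n]_A^*f^*\mathcal{L}_y$, and $f^*\mathcal{L}_y$ represents $f^\vee(y)$. We also have $\tau_x^*f^*u=f^*\tau_{f(x)}^*u$. Both scalars therefore coincide, since pulling back a constant leaves it unchanged.

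For 4, the inclusion $\supseteq$ follows from 5, because $e_n(x,\varphi^\vee y)=e_n(\varphi x,y)=1$ for $x\in\ker\varphi$. The reverse inclusion comes from descent. If $e_n(x,\mathcal{L})=1$ for all $x\in\ker\varphi$, factor $[n]=\psi\circ\varphi$ using Proposition~\ref{prop:FactorThroughDegree}. The trivialisation $u$ of $[n]^*\mathcal{L}$ is then $\ker\varphi$-invariant. It therefore descends along $\varphi$, which is a quotient by $\ker\varphi$, and gives a trivialisation of $\psi^*\mathcal{L}$. Hence $\mathcal{L}=\varphi^\vee(\mathcal{M})$ with $\mathcal{M}=\mathcal{L}$ pulled back along $\psi$ to $B$; more precisely, one takes $\mathcal{M}=\psi^\vee$-related data, and it must be checked that $\mathcal{M}$ is $n$-torsion. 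The descent of line bundles with linearisation along an étale quotient is the key input here.

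\emph{Items 2 and 6 (perfectness and skew-symmetry).} For 2, apply 4 with $\varphi=[n]$, so that $\ker\varphi=A[n]$. The left kernel of the pairing in the second variable is then $[n]^\vee(A^\vee[n])=n\,A^\vee[n]=0$. Since both groups have order $n^{2g}$, nondegeneracy on one side gives perfectness.

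For 6, write $e_n^{\mathcal{L}}(t,t)$ using $\lambda_{\mathcal{L}}(t)=\tau_t^*\mathcal{L}\otimes\mathcal{L}^{-1}$. Compute with the theorem of the cube (Theorem~\ref{thm:Cube}) applied to the Mumford bundle $m^*\mathcal{L}\otimes p_1^*\mathcal{L}^{-1}\otimes p_2^*\mathcal{L}^{-1}$. This should show that $e_n^{\mathcal{L}}(t,t)=1$, and alternation combined with bilinearity then gives skew-symmetry.

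I expect the main obstacle to be the reverse inclusion in 4, which needs a descent statement the paper has not developed. The alternation computation in 6 is also delicate, and I would try to reduce it to the case $t\in\ker\lambda_{\mathcal{L}}$ via the symmetric bundle $\mathcal{L}\otimes[-1]^*\mathcal{L}$.
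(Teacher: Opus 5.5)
The paper states this theorem without proof, so I am judging your proposal on its own terms. Items 1, 3 and 5 are fine. In 1, your computation already gives the statement verbatim, since $\mathcal{L}_Q^{\otimes m}$ \emph{is} the point $mQ$ of $A^\vee$; no reinterpretation or prior bilinearity is needed. Item 2 is also fine: it only uses the case $\varphi=[n]$ of item 4, where $\psi=\mathrm{id}$ and descent directly gives $\mathcal{L}\cong\mathcal{O}$.

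\textbf{Item 6 is a genuine gap.} Nothing in your sketch actually computes $e_n(t,\lambda_{\mathcal{L}}(t))$. The theorem of the cube and the Mumford bundle $m^*\mathcal{L}\otimes p_1^*\mathcal{L}^{-1}\otimes p_2^*\mathcal{L}^{-1}$ give you that $\lambda_{\mathcal{L}}$ is a homomorphism, and that this bundle is symmetric under swapping factors. By themselves they say nothing about how a trivialisation of $[n]^*\lambda_{\mathcal{L}}(t)$ transforms under $\tau_t^*$. Turning that symmetry into a statement about your $e_n$ is exactly the missing work.

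Your proposed reductions do not help. For $t\in\ker\lambda_{\mathcal{L}}$ one has $e_n^{\mathcal{L}}(t,t)=e_n(t,0)=1$ trivially, so that case carries no content. Since $\lambda_{[-1]^*\mathcal{L}}=\lambda_{\mathcal{L}}$, the bundle $\mathcal{L}\otimes[-1]^*\mathcal{L}$ has $\lambda=2\lambda_{\mathcal{L}}$. It therefore only controls $(e_n^{\mathcal{L}})^2$, which loses exactly the information needed when $n$ is even. To make $\mathcal{L}$ symmetric without changing $\lambda_{\mathcal{L}}$, twist instead by $Q\in\Pic^0$ with $Q^{\otimes 2}\cong[-1]^*\mathcal{L}\otimes\mathcal{L}^{-1}$.

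Either of the following would fill the gap:
\begin{itemize}
\item The standard proof (Mumford's book) goes through theta groups. For $x,y\in A[n]\subseteq\ker\lambda_{\mathcal{L}^{\otimes n}}$ one shows that $e_n(x,\lambda_{\mathcal{L}}(y))$ coincides, up to sign conventions, with the commutator pairing $e^{\mathcal{L}^{\otimes n}}(x,y)$ of the theta group of $\mathcal{L}^{\otimes n}$. That pairing is alternating by construction. Proving this identification is a genuine computation.
\item Alternatively, use item 5 for $f=\lambda_{\mathcal{L}}\colon A\to A^\vee$, together with $\lambda_{\mathcal{L}}^\vee=\lambda_{\mathcal{L}}$ and the biduality rule $e_n^{A^\vee}(\xi,x)=e_n^{A}(x,\xi)^{-1}$. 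This gives $e_n(x,\lambda_{\mathcal{L}}(y))=e_n(y,\lambda_{\mathcal{L}}(x))^{-1}$. However, the biduality rule is itself a nontrivial theorem.
\end{itemize}

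\textbf{Item 4 ends incorrectly.} What descent gives you is a nowhere-vanishing section of $\psi^*\mathcal{L}$, i.e.\ $\psi^\vee(\mathcal{L})=0$. So $\psi^*\mathcal{L}$ is trivial and cannot be a bundle $\mathcal{M}$ with $\varphi^*\mathcal{M}\cong\mathcal{L}$.

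The missing step is short. We have $\varphi\circ\psi\circ\varphi=\varphi\circ[n]_A=[n]_B\circ\varphi$, and $\varphi$ is surjective, so $\varphi\circ\psi=[n]_B$. Hence $\psi^\vee\circ\varphi^\vee=[n]_{B^\vee}$. Since $\varphi^\vee$ is an isogeny, write $\mathcal{L}=\varphi^\vee(\mathcal{M})$; then $n\mathcal{M}=\psi^\vee(\mathcal{L})=0$.

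Alternatively, once item 2 is known, count: both sides have order $n^{2g}/\deg\varphi$, using $\deg\varphi^\vee=\deg\varphi$ and $\ker\varphi^\vee\subseteq B^\vee[n]$.

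Finally, the factorisation $[n]_A=\psi\circ\varphi$ comes from the universal property of the quotient, as in Remark~\ref{rmk:IsogenySymmetric}. Proposition~\ref{prop:FactorThroughDegree} only factors $[\deg\varphi]$.
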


\subsection{Jacobians}
After the previous general introduction we now turn to more concrete objects; accordingly, we also try to make the exposition more down-to-earth.
We start by considering Jacobians, which are (principally polarised) abelian varieties canonically associated with nice curves. 
Note that we only work with projective curves: the reader should be warned that ``the curve $y^2 = f(x)$, defined over $\Q$" will really mean ``the unique smooth projective curve over $\Q$ birational to the affine curve $\{y^2=f(x)\} \subseteq \A^2_{\Q}$".

\subsubsection{Divisors and their classes}
Assume that $K$ is a \emph{perfect} field and let $C$ be a nice curve over $K$.
\begin{definition}
The \emph{group of divisors} $\operatorname{Div}_C$ is the free abelian group generated by the set $C(\overline{K})$. An element of this group is called a \emph{divisor}, and is nothing but a finite formal linear combination of $\overline{K}$-rational points with integral coefficients. A divisor is \emph{effective} if all its coefficients are non-negative.
We shall represent divisors in the form $D=\sum_{i=1}^k n_i P_i$, where $k \in\N$, $n_i \in \Z$ and $P_i \in C(\overline{K})$ for $i=1,\dotsc,k$.
\end{definition}

So far, this definition only depends on the $\overline{K}$-points of $C$, hence it is not too suitable to study the arithmetic of $C$ over $K$: we need to know what it means for a divisor to be defined over $K$. The definition is straightforward:
\begin{definition}
The group $\abGal{K}$ acts on $C(\overline{K})$ in a natural way, hence it also acts on $\operatorname{Div}_C$. The fixed points for this action form a subgroup, which we denote by $\operatorname{Div}_C(K)$ and whose elements we call \emph{$K$-rational divisors}.
\end{definition}

\begin{remark}
If all the points $P_1,\dotsc,P_r$ are rational, a divisor $D=\sum n_i P_i$ is certainly $K$-rational, because $\abGal{K}$ acts trivially on the $P_i$. However, the divisor $D=\sum n_i P_i$ can be $K$-rational even if the points $P_i$ are not: consider for example the curve $C/\Q \colon x^2+y^2=-1$. The divisor $(0,i) + (0,-i)$ is $\Q$-rational, but the points $(0,i)$ and $(0,-i)$ are not defined over $\Q$.
\end{remark}

We also remark that there is an obvious numerical invariant attached to a divisor, namely its degree:
\begin{definition}[Degree of a divisor]
The \emph{degree} of a divisor $D = \sum_{i=1}^k n_i P_i$ is
\[
\deg D = \sum_{i=1}^k n_i \in \Z.
\]
We interpret $\deg$ as a group homomorphism $\Div_C \to \Z$; its kernel will be denoted by $\Div^0_C$ (the subgroup of divisors of degree 0).
\end{definition}

%

\begin{definition}[Divisor of a function]
Let $f \in \overline{K}(C) \setminus \{0\}$ be a rational function on $C$. The divisor of $f$ is
\[
\operatorname{div}(f) = \sum_{P} v_P(f) \, P,
\]
where the sum is over all the points of $C(\overline{K})$. Here $v_P(f)$ is the order of vanishing of $f$ at $P$; if $f$ has a pole of order $k$ at $P$, then $v_P(f)=-k$.
\end{definition}
\begin{remark}
Any nonzero rational function has only finitely many zeroes and poles, hence the sum defining $\operatorname{div}(f)$ is finite and $\operatorname{div}(f)$ is indeed a divisor.
\end{remark}
\begin{definition}[Principal divisor]
A divisor is said to be \emph{principal} if it is of the form $\operatorname{div}(f)$ for some nonzero rational function $f \in \overline{K}(C)$. One can check that a principal divisor has degree 0 (a rational function on a smooth projective curve has as many zeroes as poles). We denote by $\operatorname{Princ}_C \subseteq \Div^0_C$ the subgroup of principal divisors.
\end{definition}
\begin{definition}[Picard group]\label{def: Picard group curve}
We define 
\[
\operatorname{Pic}_C \coloneq \Div_C/\operatorname{Princ}_C
\]
and
\[
\operatorname{Pic}^0_C \coloneq \Div^0_C/\operatorname{Princ}_C;
\]
equivalently, we observe that $\deg \colon \Div_C \to \Z$ descends to $\deg \colon \operatorname{Pic}_C \to \Z$, and we define $\operatorname{Pic}^0_C$ to be the kernel of the latter homomorphism.
\end{definition}

\begin{definition}
Two divisors $D_1, D_2$ are said to be \emph{linearly equivalent} (written as $D_1 \sim D_2$) if they differ by a principal divisor. We denote by $[D]$ the class of the divisor $D$ in $\operatorname{Pic}_C$.
\end{definition}

\begin{notation}
We shall also write $\Princ_C(\overline{K})$ or $\Div_C(\overline{K})$ when we want to stress that we are considering these as abstract groups ($\Div_C, \Princ_C$ have in principle a richer structure).
\end{notation}

By analogy with the definition of $\Div_C(K)$, we now define the group $\operatorname{Princ}_C(K)$ as the $\abGal{K}$-invariant subgroup of $\operatorname{Princ}_C(\overline{K})$. Notice that $\operatorname{Princ}_C(\overline{K})$ is a Galois submodule of $\operatorname{Div}^0_C(\overline{K})$, which is in turn a Galois submodule of $\operatorname{Div}_C(\overline{K})$. This allows us to take quotients \emph{in the category of Galois modules}, and finally leads to the definition of the $K$-points of $\operatorname{Pic}^0_C$.
\begin{definition}
We let $\displaystyle \Pic_C(K)$ be the $\abGal{K}$-invariant subgroup of $\frac{\Div_C(\overline{K})}{\Princ_C(\overline{K})}$, and denote by $\displaystyle \Pic^0_C(K)$ its subgroup given by the $\abGal{K}$-invariants of $\frac{\Div^0_C(\overline{K})}{\Princ_C(\overline{K})}$.
\end{definition}

\begin{theorem}[Jacobian of a curve]
Let $C$ be a nice curve over $K$. There is an abelian variety $J$ over $K$
such that there is an isomorphism of $\abGal{K}$-modules $\Pic^0_C \cong J(\overline{K})$. This abelian variety $J$ is called the Jacobian variety, or just the Jacobian, of $C$. The dimension of $J$ is the genus of $C$ (in particular, when $g(C)=0$ the Jacobian is the trivial $0$-dimensional abelian variety).
\end{theorem}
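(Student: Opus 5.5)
The plan is to construct $J$ from the symmetric powers of $C$, following Weil's strategy as streamlined by Milne. Let $g$ be the genus of $C$, and suppose first that $C(K)\neq\emptyset$; fix $P_0\in C(K)$. For each $d\ge 0$, the symmetric power $C^{(d)}=C^d/S_d$ is a smooth projective variety of dimension $d$, since $C$ is a smooth curve. Its $\overline{K}$-points are the effective divisors of degree $d$. There is a natural Galois-equivariant map
\[
C^{(d)}(\overline{K}) \to \Pic^0_C, \qquad D \mapsto [D - dP_0].
\]
Riemann--Roch gives two facts. First, every class in $\Pic^0_C$ is represented by some $D-gP_0$ with $D$ effective of degree $g$, because $\ell(E+gP_0)\ge 1$ whenever $\deg E=0$. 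Second, the fibre over a class is the projective space $|D|\cong\bbP(H^0(\mathcal{O}(D)))$, and this is a single point for $D$ in a dense open subset $U\subseteq C^{(g)}$, namely where $\ell(D)=1$. So $C^{(g)}\to\Pic^0_C$ is surjective and generically injective, and $\Pic^0_C$ is birational to $C^{(g)}$.

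Next I would put a group structure on this birational model. Addition of divisor classes gives a rational map
\[
C^{(g)}\times C^{(g)}\dashrightarrow C^{(g)}, \qquad (D,D')\mapsto \text{the unique } D''\in U \text{ with } D''\sim D+D'-gP_0,
\]
defined on a dense open set. This makes $C^{(g)}$ a birational group in Weil's sense. Weil's theorem, that every birational group law over $K$ extends to a group variety birational to it, then produces a connected algebraic group $J$ over $K$ together with a Galois-equivariant bijection $J(\overline{K})\cong\Pic^0_C$ extending the map on $U$. Alternatively, one can construct $J$ directly by gluing translates of $U$, using that finitely many translates cover $\Pic^0_C$. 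In that version, the main point to check is that the transition maps are regular and defined over $K$.

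It remains to show that $J$ is proper, hence an abelian variety. I would use the valuative criterion. Given a curve $T$, a point $t\in T$, and a morphism $T\setminus\{t\}\to J$, lift it locally to a morphism into $C^{(g)}$. This lift extends over $t$ because $C^{(g)}$ is projective, and composing with $C^{(g)}\to J$ (which is a morphism, since it is regular on the translates used for gluing) gives the required extension. The dimension is $\dim C^{(g)}=g$. If $C(K)=\emptyset$, I would carry out the construction over a finite Galois extension $L$ where $C$ acquires a point, and descend $J_L$ to $K$ by Galois descent. The descent data comes from the canonicity of the isomorphism $J_L(\overline{K})\cong\Pic^0_C$: changing the base point only translates, and the identification is independent of that choice. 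One uses here that $J$ is quasi-projective, so descent is effective.

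I expect the main obstacle to be the step from birational group to actual group variety: showing that the glued space is separated and that the bijection with $\Pic^0_C$ is Galois-equivariant and algebraic. I would rely on Weil's extension theorem and not redo it. A cleaner alternative is to show that the relative Picard functor is represented by the Hilbert scheme $C^{(d)}\to\Pic^d$ for $d\ge 2g-1$, which is a projective bundle by Riemann--Roch since $\ell(D)=d-g+1$ constantly. One then takes $J=\Pic^0$ and transfers properness and smoothness from the projective bundle.
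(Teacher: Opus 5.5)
\paragraph{Comparison with the paper.} The paper states this theorem without proof. The only case it actually checks is genus one (Example~\ref{ex:JacEC}): there $C^{(1)}=C$, every effective divisor $D$ of degree $1$ has $\ell(D)=1$, and your map $D\mapsto[D-P_0]$ is already a Galois-equivariant bijection, so no birational group law is needed.

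Your outline is Weil's classical construction, as in Milne's account of Jacobians, and most of it is sound as a roadmap:
\begin{itemize}
\item the Riemann--Roch input and the birational group law on $C^{(g)}$;
\item the appeal to Weil's extension theorem and the equality $\dim J=g$;
\item the descent step. The base-point-change maps are isomorphisms of group varieties by the uniqueness in Weil's theorem. They satisfy the cocycle condition because each is determined by inducing the identity on $\Pic^0_C$. Algebraic groups are quasi-projective, so the descent is effective.
\end{itemize}
One caution: the bijection $J(\overline{K})\cong\Pic^0_C$ is not an output of Weil's theorem. That theorem gives only a birational map $\Phi\colon C^{(g)}\dashrightarrow J$, compatible with the laws and injective and regular on some dense open $U'\subseteq U$. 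The bijection has to be built from this, using that every point of $J$ is a sum of two points of $\Phi(U')$.

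\paragraph{The gap: properness.} Your argument needs $\Phi$ to be a morphism on all of $C^{(g)}$. The justification ``regular on the translates used for gluing'' only restates this at the points $D$ with $\ell(D)>1$; it does not prove it. The standard fix is a translation trick.
\begin{itemize}
\item Given $D$, pick $E\in U'$ general. Then $\ell(D+E-gP_0)=1$, and the unique effective divisor $\mu_E(D)\in|D+E-gP_0|$ lies in $U'$.
\item The assignment $D'\mapsto\mu_E(D')$ is a morphism on the open set where $\ell(D'+E-gP_0)=1$. This follows from cohomology and base change applied to the universal divisor, which is also how one sees that the group law is a rational map.
\item Hence $D'\mapsto\Phi(\mu_E(D'))-\Phi(E)$ extends $\Phi$ regularly near $D$. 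It also shows that, under your bijection, $\Phi$ is $D\mapsto[D-gP_0]$ at every point, not just on $U'$.
\end{itemize}
Once you have this, the valuative argument is unnecessary. By Riemann--Roch, $\Phi$ is a surjective morphism from the proper variety $C^{(g)}$ onto the separated variety $J$, so $J$ is proper. This shortcut also helps because your lifting step, as written, identifies $J(L)$ with the $L$-rational divisor classes for the function field $L$ of $T$. You have only established that identification over $\overline{K}$.

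\paragraph{Your alternative route.} $C^{(d)}$ represents effective divisors of degree $d$, not the Picard functor. The real work there is to construct $\Pic^d$ as the quotient of $C^{(d)}$ by linear equivalence. Only after that does one get that $C^{(d)}\to\Pic^d$ is a projective bundle for $d\ge 2g-1$.
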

\begin{remark}
Jacobian varieties are principally polarised (even when $C(K)=\emptyset$): when $C(K) \neq \emptyset$ and $g \geq 2$, the (linear equivalence class of a) divisor giving the principal polarisation may be obtained as the image of any map of the form
\[
\begin{array}{ccc}
\operatorname{Sym}^{g-1}(C) & \to & \Jac(C) \\
(P_1,\dotsc,P_{g-1}) & \mapsto & P_1+\dotsc+P_{g-1} - (g-1)O,
\end{array}
\]
where $O$ is a point in $C(K)$. 

Suppose that $C(K)$ is nonempty and fix a point $P \in C(K)$. We can associate with this point an embedding of $C$ in $\Jac(C)$: identifying $\Jac(C) \cong \Pic_C^0$, we may define a map $C \to \Pic_C^0 \cong \Jac(C)$ by the formula $Q \mapsto [Q-P]$. This is called the \emph{Abel-Jacobi map}; it is an embedding provided that $C$ has positive genus. We will see below in Example \ref{ex:JacEC} that, when $C=E$ is an elliptic curve and $P=O$ is the origin of the group law, the map $E \to \Jac(E)$ defined above is an isomorphism.
\end{remark}

As is the case for many important objects in algebraic geometry, Jacobians also satisfy a useful universal property:

\begin{proposition}[Universal property of the Jacobian]
Let $C/K$ be a nice curve of positive genus with Jacobian $J/K$. Fix a rational point $P \in C(K)$ and denote by $i\colon C \to J$ the corresponding embedding of $C$ into its Jacobian.
The Jacobian $J$ satisfies the following universal property: for any abelian variety $A$ and for any algebraic morphism $f \colon C \to A$ such that $f(P)=0_A$ there is a unique homomorphism of abelian varieties $g \colon J \to A$ such that the following diagram commutes:
\[
\begin{tikzcd}
C \arrow[dr, "f"'] \arrow[r, "i"] & J \arrow[d, "g"] \\
& A
\end{tikzcd}
\]
\end{proposition}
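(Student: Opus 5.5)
Uniqueness is the easy half, and I will do it first. Any homomorphism $g$ with $g\circ i=f$ is determined on the image of $C^g$ under the map
\[
\sigma\colon C^g \to J,\qquad (Q_1,\dotsc,Q_g)\mapsto \textstyle\sum_j i(Q_j),
\]
because $g$ respects addition. This map is surjective: over $\overline{K}$, Riemann--Roch shows that every degree-$0$ class $[D]$ satisfies $D+gP\sim E$ with $E\ge 0$ effective of degree $g$. So $J(\overline{K})$ is generated by $i(C(\overline{K}))$, and two homomorphisms agreeing on $i(C)$ agree on all $\overline{K}$-points. Since $J$ is reduced and separated, they are equal as morphisms.

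For existence, I first pass to the symmetric product. Consider the morphism $F\colon C^g\to A$, $(Q_1,\dotsc,Q_g)\mapsto\sum_j f(Q_j)$. It is invariant under the symmetric group, so it factors through $C^{(g)}=\operatorname{Sym}^g C$. The map $\sigma$ also factors, giving $\bar\sigma\colon C^{(g)}\to J$ with $D\mapsto[D-gP]$. By Riemann--Roch, $\bar\sigma$ is birational: a generic effective divisor of degree $g$ has $\ell(D)=1$, so it is the unique effective divisor in its class. Hence $\bar F\circ\bar\sigma^{-1}$ is a rational map $J\dashrightarrow A$.

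The next step is to extend this rational map to a morphism. Here I use the standard fact that a rational map from a smooth variety to an abelian variety extends to a morphism. The argument is that the indeterminacy locus has codimension $\ge 2$ by the valuative criterion applied to the proper target; then, using the group structure, one shows it is empty. I expect this extension to be the main obstacle, since it is not among the results stated earlier in the paper; I would cite it from \cite{MilneAV}. An alternative that avoids it is to use Proposition~\ref{prop:MorAV}-type rigidity together with the fibres of $\bar\sigma$: they are projective spaces $|D|$, and $\bar F$ must be constant on them because any morphism $\bbP^n\to A$ is constant.

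With the extension $g\colon J\to A$ in hand, I check the basic identities. We have $g(0)=\bar F(gP)=g f(P)=0_A$, so Proposition~\ref{prop:MorAV} shows $g$ is a homomorphism. Then $g(i(Q))=g([Q+(g-1)P-gP])=f(Q)+(g-1)f(P)=f(Q)$, so $g\circ i=f$.

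The last step is descending from $\overline{K}$ to $K$. Since $f$, $P$ and $i$ are defined over $K$, every Galois conjugate of $g$ also satisfies the universal property. Uniqueness then forces $g$ to be Galois-invariant, so $g$ descends to $K$ because $K$ is perfect.
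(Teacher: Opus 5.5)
Your argument is correct in characteristic $0$, and it takes a genuinely different route from the paper: it is Weil's classical construction through the symmetric product.

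\textbf{Comparison with the paper.} The paper, in the remark following the statement, argues by duality instead. Pulling back along $f$ gives $f^*\colon A^\vee=\Pic^0_A\to\Pic^0_C=J$. Dualising gives $J^\vee\to A^{\vee\vee}\cong A$, and the principal polarisation $J\cong J^\vee$ turns this into a homomorphism $J\to A$. In other words, it identifies $\operatorname{Alb}(C,P)$ with $(\Pic^0_C)^\vee\cong\Pic^0_C$.

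That route is conceptual, generalises to the Albanese of any variety, and explains why for curves the Albanese and Picard varieties coincide. It presupposes the full duality package: representability of $\Pic^0$, biduality, and the theta polarisation. It also leaves the compatibility with $f$ and $i$ as ``one can then show'', which is where the Abel--Jacobi map has to be compared with the polarisation. It does not spell out uniqueness.

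Your route needs none of this. Uniqueness follows cleanly because $i(C)$ generates $J$, and $g\circ i=f$ holds by construction. The only outside input is the extension theorem for rational maps from smooth varieties to abelian varieties.

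One small point: when you evaluate at $gP$ and at $Q+(g-1)P$, say explicitly that $g\circ\bar\sigma=\bar F$ on all of $C^{(g)}$. This holds because the two morphisms agree on a dense open set and $A$ is separated. You need it because $\bar\sigma^{-1}$ need not be defined at $0$, for instance when $\ell(gP)\geq 2$.

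\textbf{The gap.} The step that is not justified in general is ``by Riemann--Roch, $\bar\sigma$ is birational''. Riemann--Roch only shows that $\bar\sigma$ is surjective and injective on $\overline{K}$-points over a dense open subset of $J$. In characteristic $0$ that suffices, since $K(C^{(g)})/K(J)$ is separable and its degree equals the size of a general fibre. But the paper allows any perfect $K$. In characteristic $p$, a generically injective dominant morphism between varieties of the same dimension can be purely inseparable, as Frobenius is.

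What you need is a scheme-theoretic input. At an effective divisor $D$ of degree $g$, the differential of $\bar\sigma$ is the coboundary $H^0(C,\mathcal{O}_D(D))\to H^1(C,\mathcal{O}_C)$ attached to $0\to\mathcal{O}_C\to\mathcal{O}_C(D)\to\mathcal{O}_D(D)\to 0$. Its kernel has dimension $\ell(D)-1$. Hence $\bar\sigma$ is étale wherever $\ell(D)=1$, so it is an isomorphism over the dense open subset of $J$ whose fibres consist of such divisors.

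This uses $T_0J\cong H^1(C,\mathcal{O}_C)$, i.e.\ that $J$ represents the Picard functor. The paper's theorem on Jacobians records only the Galois-module isomorphism $J(\overline{K})\cong\Pic^0_C$, so it does not give you this. With this input, the rest of your proof works in every characteristic.

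Your proposed ``alternative'' does not avoid the issue either. Constancy of $\bar F$ on the fibres $|D|$ only gives a set-theoretic factorisation through $\bar\sigma$. Turning it into a morphism $J\to A$ requires $\bar\sigma_*\mathcal{O}_{C^{(g)}}=\mathcal{O}_J$. That again rests on birationality of $\bar\sigma$, via Zariski's main theorem and normality of $J$.
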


\begin{remark}
To be more precise, this is the universal property of the Albanese variety of the pair $(C, P)$ (that is: the Albanese variety is by definition the initial object with respect to maps from $(C,P)$ to abelian varieties that carry $P$ to the neutral element). The point is that -- assuming for simplicity that $K$ is perfect -- the Albanese variety of $(C,P)$ is naturally isomorphic to the \emph{dual} of $\Pic_C^0$. Finally, one obtains $\operatorname{Alb}(C,P) \cong (\Pic_C^0)^\vee \cong \Pic_C^0$, because Jacobians are canonically principally polarised.

To construct the isomorphism $\operatorname{Alb}(C,P) \cong (\operatorname{Pic}^0_C)^\vee$, notice that given a map from $C$ to an abelian variety $A$ carrying $P$ to $0_A$ we obtain by pullback a map
\[
\operatorname{Pic}_A \to \operatorname{Pic}_C,
\]
which maps the connected component of the identity of the former into the connected component of the identity of the latter, whence a map
\[
\operatorname{Pic}^0_A \to \operatorname{Pic}^0_C,
\]
and finally, by duality, a homomorphism
\[
(\operatorname{Pic}^0_C)^\vee \to (\operatorname{Pic}^0_A)^\vee.
\]
Now our discussion of dual abelian varieties (Section \ref{sect:DualAV}) implies that $(\operatorname{Pic}^0_A)^\vee \cong (A^{\vee})^{\vee} \cong A$, so from a map of pointed varieties $(C,P) \to (A,0_A)$ we have constructed a homomorphism $(\operatorname{Pic}^0_C)^\vee \to (\operatorname{Pic}^0_A)^\vee \cong A$. One can then show that the composition
\[
C \to (\operatorname{Pic}^0_C)^\vee \to A
\]
is the map we started with.
\end{remark}

\begin{example}[An elliptic curve is its own Jacobian]\label{ex:JacEC}
Let $E/K$ be an elliptic curve. In this example, we show that $E$ is isomorphic to its Jacobian variety by using the classical construction of the group law on an elliptic curve. For simplicity, we work over a field of characteristic different from $2$ and $3$, so that any elliptic curve over $K$ can be represented by a short Weierstrass model.

We claim that any divisor of degree $0$ on $E$ is linearly equivalent to a divisor of the form $P-O$, where $O$ is the origin of the group law on $E$ and $P$ is a point on $E$. To see this, embed $E$ as a plane cubic in $\bbP^2$ (in short Weierstrass form, with the origin of the group law being the point $[0 \colon 1 \colon 0]$). Any line meets $E$ at three points $P_1, P_2, P_3$; it is then easy to see that the divisor $P_1+P_2+P_3-3O$ is principal. Now fix two points $P, Q$ lying in the affine part of $E$ (not both on the same vertical line). The line through $P$ and $Q$ meets $E$ exactly at a third point $R$ (possibly coincident with $P$ or $Q$), so the divisor of the rational function corresponding to this line is $P+Q+R -3O$. It follows that $P+Q \sim 3O-R$. On the other hand, if $R$ and $R'$ lie in the finite plane on the same vertical line, then the line in question is $x-x(R)=0$, and the function $x-x(R)$ has zeroes at $R, R'$ and a double pole at $O$. It follows that $R+R' \sim 2O$. Combined with $P+Q \sim 3O-R$, this yields $P+Q \sim 3O-R \sim 3O-(2O-R') = O+R'$. On the one hand, this recovers the classical construction of the addition law on elliptic curves; on the other, it gives us an algorithm to transform any divisor of the form $\sum_{i=1}^k P_i$ into one of the form $(k-1)O + Q$. Now consider a general divisor of degree $0$, $D=\sum_{i=1}^k P_i - \sum_{j=1}^k Q_j$. We already know how to construct points $Q_j'$ such that $Q_j+Q_j' \sim 2O$, so $D$ is linearly equivalent to $\sum_{i=1}^k P_i+ \sum_{j=1}^k (Q'_j-2O)$; applying our reduction algorithm to $\sum_{i=1}^k P_i+ \sum_{j=1}^k Q'_j$, we find that it is linearly equivalent to a divisor of the form $R+(2k-1)O$, where $R$ is a single point on $E$. Putting everything together, we obtain as desired
\[
D \sim R + (2k-1)O - 2kO = R-O.
\]
It follows in particular that the map $E \to \Pic^0(E)$ given by $P \mapsto [P-O]$ is surjective, and it's not hard to see that it is injective (if $P-O$ were a principal divisor, $P-O=\operatorname{div}(f)$, then $f:E \to \bbP^1$ would be a map of degree 1, hence an isomorphism, which is impossible since $E$ has genus 1 while $\bbP^1$ has genus 0).
\end{example}

\begin{remark}
It is often useful to think of the Jacobian of a curve $C$ as \emph{the abelian variety whose regular differentials are the same as those of $C$}. More precisely, let $i \colon C \hookrightarrow J$ be the embedding induced by the fixed rational point on $C$. Then there is a canonical pullback map
\[
i^* \colon H^0(J,\Omega^1_J) \to H^0(C, \Omega^1_C),
\]
which can be shown to be an isomorphism.
\end{remark}

\subsubsection{Example: adding points on a Jacobian}
We take the curve of this example from \cite{MR1406090}.
Consider the curve 
\[
C  / \Q \colon y^2 = x(x-1)(x-2)(x-5)(x-6).
\]
It has a single point at infinity, which we denote by $\infty$; we fix the embedding $\iota$ of $C$ into $\Jac(C)$ given by mapping $\infty$ to $[0] \in \Jac(C)(\Q)$.
We consider the degree-0 divisors on $C$
\[
A = (0, 0) + (1, 0) -2 \infty \quad \text{ and } \quad B = (2, 0) + (3, 6)-2\infty.
\]
Their divisor classes $[A]$ and $[B]$ are points of $\Jac(C)$. We show some manipulations involving the points $[A]$ and $[B]$ on the Jacobian; in particular, we want to determine a divisor $D_1+D_2-2\infty$ on $C$ that represents the same divisor class as $[A+B]$.

We first find a function vanishing at all the points in the support of $A$ and $B$: for example,
$f \coloneq y - x(x - 1)(x - 2)$ works. Substituting this into the equation for $C$ we find $x(x - 1)(x - 2)(x - 3)(x^2 - x + 10) = 0$. Thus the divisor of the function $f$ is
\[
(0,0) + (1,0) + (2,0) + (3,6) + C_1 + C_2 - 6\infty=A+B+C_1+C_2-2\infty,
\]
where
\[
C_1 = \Bigl( \frac{1}{2} + \frac{1}{2}\sqrt{-39}, 15-5\sqrt{-39} \Bigr), \quad C_2 =  \Bigl( \frac{1}{2} - \frac{1}{2}\sqrt{-39}, 15+5\sqrt{-39} \Bigr).
\]
Since $[\operatorname{div} f]=[0]$ in $\Jac(C)$, we have thus proven
\[
[A+B] = [A+B-\operatorname{div} f] = [2\infty-C_1-C_2].
\]
Suppose we want to find a representative of the form $[D_1+D_2-2\infty]$: it is then enough to find a function with divisor $C_1+C_2+D_1+D_2-4\infty$. But we already know such a function! Indeed, $C_1, C_2$ are zeroes of $x^2-x+10$. This function is of degree 4 and is regular on the affine chart of our curve, so its divisor is
\[
\operatorname{div}(x^2-x+10) = C_1+C_2 + D_1+D_2-4\infty
\]
with
\[
D_1 = \Bigl( \frac{1}{2} + \frac{1}{2}\sqrt{-39}, -15+5\sqrt{-39} \Bigr), \quad D_2 =  \Bigl( \frac{1}{2} - \frac{1}{2}\sqrt{-39}, -15-5\sqrt{-39} \Bigr).
\]
Hence $[A+B] = [D_1+D_2-2\infty]$ in $\Jac(C)$; notice that once we find $[A+B]=[2\infty-C_1-C_2]$ we also obtain the same conclusion by observing that the hyperelliptic involution induces $-\operatorname{id}$ on the Jacobian, which means that (denoting by $\overline{P}$ the point $(x,-y)$ when $P=(x,y)$) we have
\[
\iota(\overline{P}) = -\iota(P) \Leftrightarrow [\overline{P}-\infty] = [\infty-P].
\]
To see that the hyperelliptic involution induces $-\operatorname{id}$, observe that, for any affine point $P=(a,b)$, the function $x-a$ has divisor $P+\overline{P}-2\infty$, with the two zeroes coinciding when $b=0$. In particular, $[P-\infty ]+[\overline{P} - \infty]=0$, that is, $\iota(P) = -\iota(\overline{P})$.
Applying this in our particular example we obtain
\[
[A+B] = [2\infty-C_1-C_2] = [-2\infty+\overline{C_1}+\overline{C_2}] = [D_1+D_2-2\infty].
\]

\section{Abelian varieties: arithmetic}

\subsection{Abelian schemes over arbitrary bases}
For arithmetic applications it is extremely useful to have a notion of abelian variety also over arbitrary bases -- that is, we want to treat the general situation of an abelian variety defined over an arbitrary base scheme $S$ and not just over the spectrum of a field (in this case, one speaks more precisely of \emph{abelian schemes}). Here is the general definition:
\begin{definition}[Abelian schemes over general bases] Let $S$ be a scheme. A $g$-dimensional abelian scheme over $S$ is a group scheme $\mathcal{A} \to S$ such that the structure morphism
$\mathcal{A} \to S$ is of finite presentation, proper, smooth, with all fibres geometrically connected and of dimension $g$.
\end{definition}

\begin{remark}
If $S$ is noetherian (which is the case in most arithmetic applications), then \emph{finite presentation} can be replaced by \emph{finite type}. Recall that $\pi \colon \mathcal{A} \to S$ is of finite type if the following holds: there is a cover of $S$ by open affine subschemes $S_i=\Spec(R_i)$ and a finite cover of every $\pi^{-1}(S_i)$ by open affine subschemes $\mathcal{A}_{ij} = \Spec(B_{ij})$, such that $B_{ij}$ is a finitely generated $R_i$-algebra for every $i,j$.
 \end{remark}

This somewhat abstract definition essentially amounts to asking for a family of abelian varieties $A_s$ that varies algebraically with respect to the (geometric) point $s \in S$. It is important in the arithmetic setting because it provides a geometric framework for the notion of \emph{reduction modulo $p$}, which can be defined in concrete terms (e.g.~using equations) for elliptic curves, but is much harder to describe in such elementary terms for higher dimensional abelian varieties. Of course, abelian schemes are also very useful for studying the more geometric problem of understanding families of abelian varieties that depend algebraically on some parameters, a typical example being the Jacobian scheme of a family of curves.

\subsection{Models of algebraic groups}
Let $K$ be a number field (respectively, a local field) and let $X$ be a variety over $K$. In many arithmetic questions it is important to control the behaviour of $X$ when we reduce modulo primes (respectively, the unique prime) of $K$. 
This requires the notion of a \emph{model} of $X$ over the ring of integers $\OK$, or at least over a non-empty open subscheme of $\Spec \OK$ containing the generic point (in the local case, the only such open subscheme is $\Spec \OK$ itself). We start by discussing this notion for general varieties before specialising to abelian varieties.

\begin{definition}
Let $S$ be a non-empty open subscheme of $\Spec \OK$ containing the generic point $\Spec K$. 
A \emph{model} of $X$ over $S$ is a flat, finitely presented $S$-scheme $\mathcal{X}$ together with an isomorphism
\[
\mathcal{X} \times_{S} \Spec K \;\cong\; X.
\]
\end{definition}

Intuitively, a model provides a way to interpolate $X$ ``at all but finitely many places'' of $\OK$, so that one can make sense of its reduction modulo primes. Specifically, if $v$ is a closed point of $S$ with residue field $\kappa(v)=\F_v$, the reduction of $\mathcal{X}$ modulo $v$ is the base-change $\mathcal{X} \times_S \Spec(\kappa(v))$.

Note that any open subscheme $S$ of $\Spec \OK$ contains $\Spec \OK[1/N]$ for some positive integer $N$. The scheme $\Spec \OK[1/N]$ is obtained from $\Spec \OK$ by deleting the closed points corresponding to the primes of $\OK$ with residue characteristic dividing $N$.
In particular, a model over $S \coloneq  \Spec \OK[1/N]$ gives us reduction maps
\[
\mathcal{X}(\OK[1/N]) \;\longrightarrow\; \mathcal{X}(\F_v)
\]
for all finite places $v$ such that $\operatorname{char}(\F_v) \nmid N$. 

\begin{example}\phantom{~}
\begin{enumerate}[label={\arabic*.}, wide, labelindent=0pt]
  \item The projective space $\bbP^n_K$ has a natural model over $\OK$, namely $\bbP^n_{\OK}$. Reduction modulo a prime ideal $v$ gives the projective space $\bbP^n_{\F_v}$.
  \item If $E/K$ is an elliptic curve given by a Weierstrass equation
  \[
  y^2 + a_1 xy + a_3 y = x^3 + a_2 x^2 + a_4 x + a_6, \quad a_i \in K,
  \]
  then, after scaling the coefficients to lie in $\OK$, we obtain a model $\mathcal{E}/\OK$. This model is not unique: in fact one often chooses a ``minimal model'' at each place $v$, meaning that the discriminant has minimal valuation at $v$ \cite[Chap.~VII]{SilvermanAEC}.
  \item More generally, any smooth projective curve $C/K$ admits a regular proper model $\mathcal{C}$ over $\OK$, although constructing such a model is delicate. 
  \item For algebraic groups such as $\GL_n$ or $\SL_n$, one can define natural group schemes $\GL_{n,\OK}$, $\SL_{n,\OK}$ over $\OK$, which serve as models of $\GL_{n,K}$ and $\SL_{n,K}$.
\end{enumerate}
\end{example}

Reduction maps are especially useful if $X$ and $\mathcal{X}$ are projective: indeed, in this case (since $\OK[1/N]$ is a normal domain) we have $X(K)=\mathcal{X}(K)=\mathcal{X}(\mathcal{O}_K[1/N])$, hence we have a reduction map $X(K) \to \mathcal{X}(\F_v)$ for all but finitely many primes $v$. This is justified in detail in the following proposition, which is a special case of \cite[Exercise 15.3]{GoertzWedhornI}; given the lack of a detailed reference, we provide a short proof below. 

Both in this argument and in the next section, we rely on standard \emph{spreading-out} results, for which we refer the reader to the wonderful exposition of \cite[\S3.2]{PoonenRatPts}. Roughly speaking, \emph{spreading out} refers to the following idea: given a scheme $X$ over a field $K$, satisfying some property $\mathcal{P}$ (such as being smooth over $\operatorname{Spec} K$, or separated, ...), for many interesting $\mathcal{P}$ one can find a dense open subscheme $U$ of $\operatorname{Spec} \mathcal{O}_K$ and a $U$-scheme $\mathcal{X} \to U$ that again satisfies $\mathcal{P}$ and whose generic fibre is $X$ (that is, $\mathcal{X} \times_{U} \operatorname{Spec} K \cong X$) -- that is, we have extended (or \emph{spread out}) a property valid at the generic point of $\operatorname{Spec} \mathcal{O}_K$ to a whole dense open subscheme.

\begin{proposition}\label{prop: XRXK}
Let $K$ be a number field with ring of integers $\mathcal{O}_K$, let $N\ge 1$, and set $R=\mathcal{O}_K[1/N]$. Let $\mathcal{X}$ be a proper $R$-scheme and write $X\coloneq \mathcal{X}\times_R K$. The natural map
\[
\Phi \colon \mathcal{X}(R)\longrightarrow \mathcal{X}(K)
\]
is a bijection.
\end{proposition}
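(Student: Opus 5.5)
The plan is to treat this as an instance of the valuative criterion of properness, applied one local ring at a time, and then to glue the resulting local extensions. Here $\mathcal{X}(K)$ means $\operatorname{Hom}_R(\Spec K,\mathcal{X})$, and $\Phi$ is restriction along $\Spec K\to\Spec R$.

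\emph{Injectivity.} Since $\mathcal{X}\to\Spec R$ is proper, it is in particular separated. The scheme $\Spec R$ is integral and reduced, and $\Spec K$ is its generic point, so $\Spec K\to\Spec R$ is schematically dense. Two $R$-morphisms $\Spec R\to\mathcal{X}$ that agree on a schematically dense subscheme therefore agree, because their equaliser is a closed subscheme of $\Spec R$ containing $\Spec K$, hence equal to $\Spec R$.

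\emph{Surjectivity, local step.} Fix $x\in\mathcal{X}(K)$. For every maximal ideal $\mathfrak{p}$ of $R$, the local ring $R_\mathfrak{p}$ is a discrete valuation ring with fraction field $K$, because $R$ is a Dedekind domain. The valuative criterion of properness, applied to the square with $\Spec K\to\mathcal{X}$ on top and $\Spec R_\mathfrak{p}\to\Spec R$ at the bottom, gives a unique extension $x_\mathfrak{p}\colon\Spec R_\mathfrak{p}\to\mathcal{X}$.

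\emph{Surjectivity, spreading out.} The next step is to spread each $x_\mathfrak{p}$ out to an open neighbourhood of $\mathfrak{p}$. Choose an affine open $V=\Spec B\subseteq\mathcal{X}$ containing the image of the closed point; it then contains the whole image of $\Spec R_\mathfrak{p}$, since the generic point specialises to the closed point and $V$ is open. The structure map is of finite type (proper, over a noetherian base), so $B$ is a finitely generated $R$-algebra, say with generators $b_1,\dots,b_r$. The ring map $B\to R_\mathfrak{p}$ sends each $b_i$ to an element of the form $c_i/s_i$ with $s_i\notin\mathfrak{p}$. Putting $s=\prod s_i$, the map factors through $R[1/s]$, and this gives a morphism $x_{(\mathfrak{p})}\colon D(s)\to\mathcal{X}$ which restricts to $x$ on $\Spec K$.

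\emph{Surjectivity, gluing.} The opens $D(s)$ obtained this way, one for each $\mathfrak{p}$, cover $\Spec R$. On an overlap, the two morphisms agree by the injectivity argument applied over the integral open overlap, since both restrict to $x$ on its generic point. They therefore glue to an element of $\mathcal{X}(R)$ mapping to $x$.

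I expect the spreading-out and gluing step to be the main obstacle: all the properness input sits in the local valuative criterion, and the remaining work is the bookkeeping that passes from the local rings back to $R$.
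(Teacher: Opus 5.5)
Your proof is correct and follows essentially the same route as the paper: injectivity comes from separatedness and the density of the generic point, and surjectivity comes from the valuative criterion at the local rings $R_\mathfrak{p}$, spreading out, and gluing via separatedness. The only difference is cosmetic: the paper first spreads out the $K$-point itself to a dense open $U$ and then handles only the finitely many closed points of $\Spec R\setminus U$, whereas you apply the valuative criterion at every maximal ideal and glue over the resulting, possibly infinite, open cover, which is equally valid.
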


\begin{proof}
Let $S=\Spec R$ and $\eta$ be the generic point $\Spec K$ of $S$.

We first show injectivity. By definition, properness implies that $\mathcal{X}\to S$ is separated. This implies that the subscheme of $S$ where two sections $\alpha_1, \alpha_2 \colon \Spec R \to \mathcal{X}$ agree is closed. If $\alpha_1, \alpha_2$ agree on $\eta$ (that is, if $\Phi(\alpha_1)=\Phi(\alpha_2)$), then -- since the generic point is dense in $S$ -- this closed set must be all of $S$, hence $\alpha_1=\alpha_2$ as desired.

Now we prove surjectivity. Let $\alpha\colon \eta\to \mathcal{X}$ be a $K$-point. Since $\mathcal{X}\to S$ is of finite type, by spreading out (see e.g.~\cite[Theorem 3.2.1(iii)]{PoonenRatPts}) $\alpha$ extends to a morphism $\beta \colon U \to \mathcal{X}$ on some nonempty open $U\subset S$. The complement $S\setminus U$ is finite, consisting of closed points $p_1,\dotsc,p_n$. For each $p_i$, the valuative criterion for properness provides a unique extension $\Spec \mathcal{O}_{S,p_i}\to\mathcal{X}$. By spreading out again, these morphisms further extend to an open subscheme of $S$. By separatedness, these local extensions glue with $\beta$ on overlaps (because they all agree with $\beta$ on $\eta$). Iterating over all missing points produces an $R$-section extending $\alpha$.  
\end{proof}

We make this abstract discussion more concrete with the following example.
\begin{example}
Consider the elliptic curve
\[
E/\Q\colon Y^2Z = X^3 - 2XZ^2 \subset \bbP^2_{\Q},
\]
and take the obvious model
\[
\mathcal{E}/\Z[1/6]\colon Y^2Z = X^3 - 2XZ^2 \subset \bbP^2_{\Z[1/6]}.
\]
By the above discussion, we have a reduction map
\[
\mathrm{red}_5\colon \mathcal{E}(\Z[1/6])=\mathcal{E}(\Q)\longrightarrow
\mathcal{E}(\F_5)=E(\F_5);
\]
we now compute it explicitly on an example. The point $P=\bigl(\tfrac{9}{4},-\tfrac{21}{8}\bigr)\in E(\Q)$ in the affine patch $y^2 = x^3-2x$ of $E/\Q$ corresponds to the projective point $[18 \colon -21 \colon 8] \in E(\Q) \subset \bbP^2(\Q)$. In fact, this point makes perfect sense also in $\mathcal{E}(\Z[1/6])$, and we 
may reduce its coordinates modulo $5$:
\[
[18\colon -21\colon 8] \equiv [3 \colon -1 \colon 3] \pmod 5.
\]
If we write it again in affine coordinates, this gives the point $(1, 3) \in E(\F_5)$. A quick check on the reduced equation $y^2=x^3-2x$ over $\F_5$ shows $3^2\equiv 1^3-2\cdot 1\equiv 4\pmod{5}$, so $(1,3)$ lies in $E(\F_5)$ as claimed.
\end{example}

\subsection{Models of abelian varieties}

For abelian varieties $A/K$, one can find models over open subsets of $\Spec \mathcal{O}_K$ not just for the variety, but for the full group structure. That is, there exists a positive integer $N$ and a model $\mathcal{A} \to S \coloneq  \Spec \OK[1/N]$ that is also an abelian scheme. More explicitly, $\mathcal{A} \to S$ is a smooth proper scheme of finite presentation, and there are an addition map $+\colon\mathcal{A}\times_{\mathcal{O}_K[1/N]}\mathcal{A}\to \mathcal{A}$, an inverse map $[-1]\colon\mathcal{A}\to \mathcal{A}$, and a zero section $0\colon\Spec \mathcal{O}_K[1/N]\to \mathcal{A}$ that are morphisms of schemes over $\mathcal{O}_K[1/N]$ and satisfy the relations given in Section \ref{sect: group schemes}. The existence of such a model follows from standard spreading-out arguments combined with the fact that abelian varieties are proper and smooth over $K$. We sketch the proof.

\begin{theorem}[Finiteness of the set of places of bad reduction]\label{thm:FinitenessBadReduction}
Let $A/K$ be an abelian variety over a number field. There exists an open subscheme $S$ of $\Spec \OK$ and a smooth proper group scheme $\mathcal{A} \to S$ with $\mathcal{A} \times_S \Spec K \cong A$.
\end{theorem}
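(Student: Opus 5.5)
We will prove the statement by spreading out, in stages, first the scheme $A$, then the group-structure morphisms, then the identities these morphisms satisfy, and finally the properties ``smooth'' and ``proper''. Each stage may force us to shrink the base. Since at every stage we only remove finitely many closed points, the final base is still a non-empty open $S=\Spec \OK[1/N]$.

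First, $A$ is projective over $K$, so it is cut out in some $\bbP^n_K$ by finitely many homogeneous polynomials. Clearing denominators, these have coefficients in $\OK[1/N_0]$ for a suitable $N_0$. Let $\mathcal{A}_0$ be the closed subscheme of $\bbP^n_{\OK[1/N_0]}$ they define, and replace it by the scheme-theoretic closure of $A$ inside it. Then $\mathcal{A}_0$ is flat, with generic fibre $A$, and proper, being closed in projective space. Its base change to any further open $S$ stays proper and flat.

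Second, we spread out the morphisms $m$, $i$, $e$. By \cite[Theorem 3.2.1]{PoonenRatPts}, a morphism between generic fibres of finitely presented $S$-schemes extends over some smaller dense open. We apply this to $m\colon A\times_K A\to A$ and to $i\colon A\to A$. The zero section $e\colon \Spec K\to A$ extends even over the whole base, by Proposition \ref{prop: XRXK}. After shrinking, we obtain $S$-morphisms $m_S$, $i_S$, $e_S$. Each group axiom is an equality of two $S$-morphisms from a scheme such as $\mathcal{A}\times_S\mathcal{A}\times_S\mathcal{A}$ to $\mathcal{A}$, and this equality holds on the generic fibre. Uniqueness in spreading out (\cite[Theorem 3.2.1]{PoonenRatPts}) then makes each equality hold after a further shrinking. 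Alternatively, one can use that $\mathcal{A}$ is separated and the source is flat, so the generic fibre is schematically dense. Since there are only finitely many axioms, finitely many shrinkings suffice, and $\mathcal{A}\to S$ becomes a group scheme.

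Third, we secure smoothness. Smoothness of $\mathcal{A}\to S$ is equivalent to flatness together with the smoothness of every fibre. We expect this to be the main obstacle, because it must be controlled at all closed points simultaneously rather than only generically. We plan to use the Jacobian criterion. $A$ is smooth of dimension $g$ over $K$, so the appropriate minors of the Jacobian matrix of the defining equations, together with the equations themselves, generate the unit ideal over $K$. Writing $1$ as such a combination introduces only finitely many denominators, and inverting them makes every fibre smooth of dimension $g$. Alternatively, we can invoke directly the spreading-out statement for the property ``smooth'' in \cite[\S3.2]{PoonenRatPts}, which is proved by this argument (using generic flatness).

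Finally, geometric connectedness of the fibres is not needed for the statement as given. If it were wanted, it follows from properness and smoothness once the identity section exists, by Stein factorisation or by spreading out geometric integrality. Thus $\mathcal{A}\to S$ is a smooth proper group scheme with $\mathcal{A}\times_S\Spec K\cong A$.
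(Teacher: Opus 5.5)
Your proposal is correct and follows essentially the same route as the paper. The paper spreads out $A$ to a smooth projective scheme over a dense open of $\Spec \OK$, spreads out $m$, $i$, $e$ over a common dense open, and deduces the group-scheme axioms over $S$ from their validity on the generic fibre. You add detail that the paper leaves implicit: Proposition \ref{prop: XRXK} for the unit section, uniqueness of spreading out (or schematic density of the generic fibre) for the axioms, and a separate shrinking step for smoothness, which matches the informal remark following the paper's proof.
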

\begin{proof}
Since $A \to \Spec(K)$ is smooth and projective over the generic point of $\Spec(\OK)$, by spreading out, $A$ can be extended to a smooth projective variety (not necessarily an abelian scheme) over some dense open subset of $\Spec(\OK)$.
The same holds for the multiplication, inverse, and unit maps; since the intersection of these finitely many open subsets is open (and non-empty, because it contains the generic point), this implies that we can find a common open subscheme $S$ of $\Spec(\OK)$ over which $A$, $m$, $i$, and the unit section all extend. Given that these maps satisfy the commutative diagrams that encode the property of being a group scheme \emph{generically}, they also satisfy the same properties over $S$: this means precisely that there is an abelian scheme $\mathcal{A} \to S$ with generic fibre $A$. 
\end{proof}

\begin{remark}
In layman's terms, the above argument is essentially the following:  choose any set of equations for $A$ and for the morphisms that give $A$ its group structure. Let $T$ be the set of primes of $\OK$ that divide the denominator of at least one coefficient of at least one equation of the given presentation (there are only finitely many of them). On $\Spec \OK \setminus T$, the given equations give a model for $A$ and its operations. Since this scheme is generically smooth, by restricting to a further open subscheme of $\Spec \OK$ if necessary, we can also ensure that $\mathcal{A} \to S$ is smooth.
\end{remark}

Once we have such a smooth projective model, we can define reduction maps for all primes $v$ of $\mathcal{O}_K$ not dividing $N$:
\[
A(K) = \mathcal{A}(\mathcal{O}_K[1/N]) \xrightarrow{\operatorname{red}_v} \mathcal{A}(\F_v),
\]
where the first identification uses Proposition \ref{prop: XRXK} and the fact that $\mathcal{A}$ is proper. Moreover, since $\Spec \F_v \to S$ is a morphism of $S$-schemes, the general formalism of group schemes (see Remark \ref{rmk: group schemes are functors}) implies that the reduction map is a group homomorphism: we have
\[
\operatorname{red}_v(x+y) = \operatorname{red}_v(x) + \operatorname{red}_v(y) \quad \forall x,y \in A(K), \forall v \nmid N.
\]
We will often write simply $x \bmod v$ for the image of $x$ under the reduction map $\operatorname{red}_v$.

\subsubsection{Good and bad reduction}

The above discussion leads naturally to the notion of places of good and bad reduction for an abelian variety.

\begin{definition}\label{def:good and bad reduction global}
Let $A/K$ be an abelian variety.
A finite place $v$ of $K$ is said to be of \emph{good reduction} for $A$ if there exists an open subscheme $S \subset \Spec \mathcal{O}_K$ containing $v$ and an abelian scheme $\mathcal{A} \to S$ whose generic fibre is $A$. In other words, $A$ has good reduction at $v$ if it can be extended to a smooth proper model with group structure over some open subset of $\Spec \mathcal{O}_K$ that includes $v$. If no such model exists for $v$, we say that $A$ has \emph{bad reduction} at $v$. 
\end{definition}

\begin{notation}
Let $A/K$ be an abelian variety over a number field, and let $v$ be a place of good reduction for $A$. We shall write $A(\F_v)$ to mean $\mathcal{A}(\F_v)$, where $\mathcal{A} \to \Spec(\Oring_{K_v})$ is an abelian scheme extending $A_{K_v}$.
\end{notation}

Note that Theorem \ref{thm:FinitenessBadReduction} implies that there are only finitely many primes of bad reduction: the existence of a smooth proper model $\mathcal{A} \to \Spec \OK[1/N]$ for some $N \geq 1$ guarantees that all primes not dividing $N$ are of good reduction.

\begin{remark}\label{rmk: good bad reduction are local}
Good and bad reduction are local properties: their definitions could (and in fact \emph{should}) be given in the context of abelian varieties over local fields. In this setting, given an abelian variety $A \to \Spec(K)$, where $K= \operatorname{Frac}(R)$ is the field of fractions of a discrete valuation ring $R$, we say that $A$ has good reduction if there exists an abelian scheme $\mathcal{A} \to \Spec(R)$ whose generic fibre is $A$. It is a not-entirely-straightforward exercise to show that an abelian variety $A$ over the number field $K$ has good reduction at the finite place $v$ (in the sense of Definition \ref{def:good and bad reduction global}) if and only if $A \times_K K_v$ has good reduction over the local field $K_v$. We will indifferently make use of either description, depending on context. To simplify the discussion, we discuss global models only over number fields and leave the straightforward adaptations to the local case to the reader. Conversely, we work over local fields when considering field extensions and Jacobians, and leave it to the reader to formulate the analogous statements over global fields.
\end{remark}

\begin{remark}\label{rmk: Fontaine Abrashkin}
Let $A$ be an abelian variety over a number field $K$. It is not true in general that $A$ extends to an abelian scheme over $\OK$ (in fact, this \emph{almost never} happens). In particular, it is a famous theorem due to Fontaine \cite{MR807070} and Abrashkin that there are no (nonzero) abelian schemes over $\Spec(\Z)$. On the other hand, there are \emph{some} abelian varieties that extend to the full ring of integers of a number field: one of the most famous examples is the elliptic curve $E$ over $K=\Q(\sqrt{29})$ with equation
\[
y^2 + xy + a^2y = x^3,
\] where $a = \frac{5+\sqrt{29}}{2} \in \OK^\times$. This elliptic curve extends to an abelian scheme over all of $\OK$, or equivalently, it has good reduction at all the primes of $\OK$. Indeed, the displayed equation has discriminant $\Delta=-a^{10}\in\mathcal{O}_K^\times$. Note that an integral short Weierstrass equation $y^2=x^3+Ax+B$ has discriminant $-16(4A^3+27B^2)$, which is divisible by $16$ and hence cannot be a unit at primes above $2$. Thus in order to find elliptic curves with good reduction everywhere one must allow general Weierstrass equations, as in this example.
\end{remark}

\subsubsection{The case of elliptic curves}

For elliptic curves, the notions of good and bad reduction can be described very concretely. Let $E/K$ be an elliptic curve defined over a number field $K$, given by a Weierstrass equation with coefficients in $K$. After multiplying through by a suitable common denominator, one can find an integral Weierstrass equation with coefficients in $\mathcal{O}_K$. Not every such integral equation gives the ``best" model at a given prime, but there is always a \emph{minimal Weierstrass model} for each finite place $v$ of $K$: this is a Weierstrass equation defined over the local ring $\mathcal{O}_{K_v}$ whose discriminant has minimal possible valuation among all integral models at $v$ (see \cite[\S VII.1 and \S VII.2]{SilvermanAEC}).

The discriminant of a minimal Weierstrass model detects the reduction type. Let $\Delta_v$ denote the discriminant of a minimal Weierstrass model at $v$: then $E$ has good reduction at $v$ if and only if $v(\Delta_v)=0$, i.e., the discriminant is a unit in $\mathcal{O}_{K_v}$. Equivalently, when one reduces the coefficients of the minimal model modulo the maximal ideal of $\mathcal{O}_{K_v}$, the resulting cubic curve in $\bbP^2_{\F_v}$ is smooth, and hence defines an elliptic curve over the residue field $\F_v$. If instead $v(\Delta_v) > 0$, then (by minimality of $\Delta_v$) no model of $E$ can be smooth at $v$, hence $E$ does \emph{not} extend to an abelian scheme over an open of $\Spec \OK$ that contains $v$. Thus, $E$ has bad reduction at $v$. 

\begin{example}
As a concrete example, consider again the elliptic curve
\[
E\colon y^2 = x^3 - 2x
\]
over $\Q$. A straightforward computation shows that the discriminant of this model is $\Delta = -16(4(-2)^3+27\cdot 0^2) = 2^9$. The only prime divisor of the discriminant is $2$. Thus, this equation gives a smooth model over $\Z[1/2]$, and $E$ has good reduction at least at all odd primes. A less straightforward computation shows that this equation is in fact also minimal at $2$, hence $E$ has bad reduction at $2$. 

On the other hand, consider the elliptic curve $E' \colon y^2=x^3 + 5^6$ over $\Q$. The given equation gives a model that is smooth away from $2, 3$ and $5$; however, $E'$ has in fact good reduction at $5$, because $E'$ is isomorphic over $\Q$ to the elliptic curve with equation $v^2 = u^3+1$ (to see this, set $y=5^3v, x=5^2u$), and the discriminant of this new model is $-432=-2^4 3^3 \not \equiv 0 \pmod 5$. The valuations of this discriminant at $2$ and $3$ are $4$ and $3$, respectively. Since these are less than $12$, the equation is minimal at both primes, so $E'$ has bad reduction at $2$ and $3$.
\end{example}




\subsubsection{Semistable reduction}
Let $A$ be an abelian variety over a number field $K$. The discussion of good and bad reduction above is formulated in terms of models over an unspecified open subscheme of $\Spec \OK$, and -- as we observed in Remark \ref{rmk: Fontaine Abrashkin} -- there is in general no abelian scheme over $\Spec \OK$ that extends $A/K$, so it is not clear that there should exist a preferred model $\mathcal{A} \to \Spec \OK$.
It came as quite a surprise when Néron observed that abelian varieties do admit a ``best possible'' model over the whole ring of integers; the concrete meaning of \emph{best possible} is quite subtle, and is captured by the following definition.

\begin{definition}[Néron model]
Let $A/K$ be an abelian variety over a number field or a local field $K$. The \emph{Néron model} of $A$ is a smooth separated \emph{but in general not proper} group scheme $\mathcal{A} \to \Spec(\OK)$ with the following property, called the \emph{Néron mapping property}:
if $\mathcal{X}$ is a smooth separated scheme over $\OK$, then any $K$-morphism between generic fibres $\mathcal{X}_K \to \mathcal{A}_K$ can be extended to a unique $\OK$-morphism from $\mathcal{X}$ to $\mathcal{A}$.
\end{definition}

It is a deep theorem, due in its original form to Néron \cite{Ner64} and in full generality to Raynaud \cite{RaynaudNeron}, that (semi)abelian varieties over the fraction field of a Dedekind domain admit a Néron model. As is usually the case for objects defined in terms of a universal property, Néron models are unique up to unique isomorphism.

\medskip

Néron models allow us to define the notion of semistable reduction. This is a technical notion, but a very useful one.
\begin{definition}[Semistable reduction]\label{def:semistable reduction}
Let $A$ be an abelian variety over the number field $K$, with Néron model $\mathcal{A}/\OK$. We say that $A$ has \emph{semistable reduction} at the finite place $v$ if the connected component of the identity of $\mathcal{A} \times_{\OK} \F_v$ is an extension of an abelian variety by an algebraic torus. We say that $A$ has semistable reduction if this holds for every finite place $v$ of $K$.
\end{definition}

\begin{remark}
More explicitly, $A$ has semistable reduction at $v$ if the special fibre at $v$ of $\mathcal{A}$ fits into an exact sequence
\[
0 \to T \to \mathcal{A}_v^0 \to B \to 0,
\]
where $\mathcal{A}_v^0$ is the connected component of $\mathcal{A}_v$ containing the origin, $T$ is a torus, and $B$ is an abelian variety. Failure of semistable reduction for $A$ essentially means that $T$ (which always exists as a linear algebraic group) has some factors isomorphic to $\G_a$.
\end{remark}
We note that one may attach a further numerical invariant to a place of semistable reduction:
\begin{definition}[Toric rank]
Let $A/K$ have semistable reduction at $v$. Write 
\[
0 \to T \to \mathcal{A}_v^0 \to B \to 0
\]
for the canonical exact sequence involving the identity component of the special fibre of $\mathcal{A}$ at $v$. The \emph{toric rank} of $A$ at $v$ is $\dim(T)$; it is equal to 0 precisely when $A$ has good reduction at $v$.
\end{definition}

Finally, we deal with the change of reduction type induced by an extension of the ground field. For the rest of this subsection we work over a local field; for varieties over number fields, the statements apply at a finite place $v$ after base change to $K_v$.

\begin{definition}[Potentially good, potentially semistable reduction]
Let $L/K$ be a finite extension of local fields. We say that $A$  \emph{acquires good reduction over $L$} (respectively \emph{acquires semistable reduction over $L$}) if $A \times_K L$ has good (respectively semistable) reduction.
We say that $A$ has \emph{potentially good} (respectively \emph{potentially semistable}) reduction if there exists some extension $L/K$ such that $A$ acquires good (semistable) reduction over $L$.
\end{definition}

\begin{remark}
When working with elliptic curves one often uses the term \emph{multiplicative reduction} to refer to what we call bad semistable reduction. In higher dimensions, however, the distinction between multiplicative and semistable is important: one says that $A$ has multiplicative reduction when it has semistable reduction of toric rank equal to the dimension of $A$.
We could also give an alternative definition of multiplicative reduction: an elliptic curve has multiplicative reduction if and only if the smooth locus in the reduction is isomorphic, possibly after a quadratic extension, to the multiplicative group $\G_m$ (compare with Definition \ref{def:semistable reduction}).
\end{remark}


Abelian varieties with semistable reduction are somewhat easier to understand than general abelian varieties, especially in terms of Galois representations. For this reason, the following important result due to Grothendieck \cite{MR354656} is often useful.

\begin{theorem}[Semistable reduction theorem, local case]
Let $A/K$ be an abelian variety over a local field. Then $A$ has potentially semistable reduction.
\end{theorem}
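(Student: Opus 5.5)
The plan is Grothendieck's proof via the Galois action on the Tate module, combined with a criterion of Néron--Ogg--Shafarevich type. We work with the $\ell$-adic Tate module $T_\ell A = \varprojlim A[\ell^n]$ for a prime $\ell$ different from the residue characteristic $p$ of $K$. We also use the continuous representation $\rho_\ell \colon \abGal{K} \to \GL(T_\ell A)\cong \GL_{2g}(\Z_\ell)$; the identification $A[\ell^n]\cong(\Z/\ell^n\Z)^{2g}$ over $\overline{K}$ follows from the degree computation $\deg[n]=n^{2g}$ together with étaleness of $[\ell^n]$ (Remark \ref{rmk:EtalenessIsogenies}).

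The key input I would assume, not prove from scratch, is Grothendieck's criterion. It says that $A$ has semistable reduction if and only if the inertia group $I_K$ acts \emph{unipotently} on $T_\ell A$, meaning $(\rho_\ell(\sigma)-1)^2=0$ for all $\sigma\in I_K$. This criterion is the main obstacle and the genuinely deep part. It requires analysing the Néron model $\mathcal{A}$: the invariants $T_\ell A^{I_K}$ are identified with $T_\ell$ of the special fibre $\mathcal{A}_v^0$. One then compares this with the orthogonal of the invariants under the Weil pairing of Theorem \ref{thm:PropertiesWeil}, applied to $A$ and $A^\vee$. The comparison shows that $T_\ell A^{I_K}$ has the right size exactly when the unipotent part of $\mathcal{A}_v^0$ vanishes. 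I would cite \cite{MR354656} for this.

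Granting the criterion, the argument is a group-theoretic reduction in three steps.

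First, choose $\ell\ge 3$, $\ell\neq p$. Let $L=K(A[\ell^2])$, or $K(A[\ell])$ if $\ell \geq 3$ suffices. This is a finite Galois extension, since $A[\ell]$ is a finite étale group scheme (Remark \ref{rmk: etale group schemes}).

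Second, over $L$ the image of $I_L$ lies in the congruence subgroup $1+\ell M_{2g}(\Z_\ell)$. This group is a pro-$\ell$ group and is torsion-free for $\ell\ge3$.

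Third, we show that every $\sigma\in I_L$ acts unipotently. Recall the structure of inertia: wild inertia $P$ is pro-$p$ and $I_L/P\cong\prod_{q\neq p}\Z_q(1)$. A pro-$\ell$ image must kill $P$ and factor through the $\Z_\ell(1)$ quotient, so the image is procyclic, generated by some $t=\rho_\ell(\sigma_0)$. For any Frobenius lift $\phi$ we have $\phi\sigma_0\phi^{-1}=\sigma_0^{q}$, where $q=\#\F_v$. Hence $t$ is conjugate to $t^q$. The eigenvalues of $t$ are therefore permuted by $\lambda\mapsto\lambda^q$, so they are roots of unity; since $t\in 1+\ell M_{2g}(\Z_\ell)$, they all equal $1$.

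This only shows that $t$ is unipotent, not that $(t-1)^2=0$. For the sharper statement I would need the strong form of the criterion, namely that quasi-unipotence of inertia already forces the filtration coming from the monodromy pairing to have length $2$. This is again part of Grothendieck's orthogonality theorem. So the plan is: prove potential unipotence by the elementary argument above, then invoke the full criterion (inertia acts unipotently on $T_\ell A$ if and only if $A$ has semistable reduction) to conclude that $A_L$ has semistable reduction.
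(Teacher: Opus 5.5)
The paper gives no proof; it simply cites Grothendieck, and your proposal is exactly the argument behind that citation. Your monodromy step is correct: for $\ell\ge 3$, $\ell\neq p$ and $L=K(A[\ell])$, the image of $I_L$ is a pro-$\ell$ quotient of $\Z_\ell(1)$ whose generator is conjugate to its $q$-th power, hence unipotent. The conclusion then follows from Grothendieck's Galois criterion, provided you cite it in the form ``if every element of $I_L$ acts unipotently on $T_\ell A$, then $A_L$ has semistable reduction'' (SGA~7, IX, Prop.~3.5), with $(\sigma-1)^2=0$ as a consequence rather than the hypothesis. Your side claim that quasi-unipotence forces level $2$ by the orthogonality theorem is wrong: every abelian variety has quasi-unipotent inertia, and orthogonality (which gives semistable $\Rightarrow(\sigma-1)^2=0$) plus dimension counts does not rule out a nontrivial unipotent part when $(\sigma-1)^2\neq 0$. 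The direction you need is the deep half of the criterion, which over a finite residue field can be obtained, e.g.\ from Frobenius weights, since Frobenius conjugates the monodromy logarithm to $q$ times itself.
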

\begin{corollary}[Semistable reduction theorem, global case]
An abelian variety over a number field $K$ has potentially semistable reduction at all finite places of $K$.
\end{corollary}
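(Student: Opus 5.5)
The plan is to deduce the global statement from the local semistable reduction theorem, one place at a time. Recall from Remark \ref{rmk: good bad reduction are local} that reduction properties at a finite place $v$ of $K$ are to be read after base change to the completion $K_v$. The statement to prove is therefore: for every finite place $v$ of $K$ there is a finite extension $L_w/K_v$ such that $A \times_K L_w$ has semistable reduction.

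The first step is to fix a finite place $v$ and consider $A_{K_v} \coloneq A \times_K K_v$. This is an abelian variety over the local field $K_v$. Indeed, base change preserves being a smooth, proper, geometrically connected group scheme, and geometric connectedness holds by the remark following the definition of abelian varieties.

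Applying the local semistable reduction theorem to $A_{K_v}$ yields a finite extension $L_w/K_v$ over which $A_{L_w}$ has semistable reduction. Concretely, the identity component of the special fibre of the Néron model of $A_{L_w}$ over $\Oring_{L_w}$ is an extension of an abelian variety by a torus. This is exactly the assertion at $v$, so the argument is complete place by place.

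The only point needing care is the interpretation of \emph{at all finite places}. The corollary asks for potentially semistable reduction at each $v$ separately, so no single extension is required. If one also wants a single global finite extension $L/K$ with semistable reduction everywhere, I would argue as follows:
\begin{enumerate}
\item By Theorem \ref{thm:FinitenessBadReduction}, $A$ has good reduction, hence semistable reduction of toric rank $0$, outside a finite set $\Sigma$ of places.
\item For each $v \in \Sigma$, choose a local extension $L_{w}/K_v$ as above.
\item By weak approximation, find a global $L/K$ whose completions at places above $v \in \Sigma$ contain the $L_w$. For instance, adjoin a root of a polynomial close $v$-adically to the minimal polynomials of generators of each $L_w$, and use Krasner's lemma.
\item Use that semistable reduction is stable under further finite extensions, and good reduction likewise.
\end{enumerate}

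This refinement is the main obstacle. Stability of semistability under extension needs the Néron model's base-change behaviour in the semistable case, which is not proved in the paper. I would cite it rather than prove it. For the corollary as literally stated, however, the first paragraph suffices.
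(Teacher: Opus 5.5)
Your argument is correct and matches the paper's intent: the paper gives no separate proof, noting only that global statements are read at each finite place $v$ after base change to $K_v$, where the local semistable reduction theorem applies directly. Your refinement to a single global extension goes beyond what is asked, and the stability of good and semistable reduction under finite extensions that it needs is stated (without proof) in the theorem immediately following the corollary, so you can simply cite that theorem.
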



We also mention that the property of having good or semistable reduction is preserved under extension of the ground field.
\begin{theorem}
Let $A/K$ be an abelian variety over a local field.
\begin{enumerate}[label={\arabic*.}, wide, labelindent=0pt]
\item Suppose that $A$ has good reduction. Then $A$ has semistable reduction.
\item Suppose that $A$ has good reduction. Then for any finite extension of fields $L/K$ the abelian variety $A_L$ also has good reduction.
\item Suppose that $A$ has bad semistable reduction. Then for any finite extension of fields $L/K$ the abelian variety $A_L$ has bad semistable reduction.
\end{enumerate}
\end{theorem}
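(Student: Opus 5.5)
The three statements will be proved in order, working throughout with Néron models and with the definition of semistable reduction (Definition \ref{def:semistable reduction}).

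For (1), suppose $\mathcal{A} \to \Spec \OK$ is an abelian scheme with generic fibre $A$. I first show that $\mathcal{A}$ is the Néron model. Let $\mathcal{X}$ be smooth and separated over $\OK$, and let $f\colon\mathcal{X}_K \to A$ be a $K$-morphism. Since $\mathcal{X}$ is smooth, hence normal, and $\mathcal{A}$ is proper, $f$ extends to a rational map $\mathcal{X} \dashrightarrow \mathcal{A}$ defined in codimension one (by the valuative criterion at the codimension-one points of the special fibre). I then extend it everywhere with Weil's extension theorem: a rational map from a smooth scheme to a smooth separated group scheme that is defined in codimension one is defined everywhere. Uniqueness follows from separatedness and the density of the generic fibre, as in Proposition \ref{prop: XRXK}. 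The special fibre of the Néron model is therefore an abelian variety, so it is an extension of an abelian variety by the trivial torus, which means the reduction is semistable (of toric rank $0$).

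For (2), I base change the abelian scheme along $\Spec \Oring_L \to \Spec \OK$. Smoothness, properness, finite presentation, geometric connectedness and dimension of fibres, and the group scheme axioms are all stable under base change. Hence $\mathcal{A}\times_{\OK}\Oring_L$ is an abelian scheme with generic fibre $A_L$, and $A_L$ has good reduction.

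For (3), which I expect to be the main obstacle, Néron models do not commute with ramified base change. There is still a canonical morphism $\mathcal{A}\times_{\OK}\Oring_L \to \mathcal{A}_L$, coming from the Néron mapping property of $\mathcal{A}_L$ applied to the smooth scheme $\mathcal{A}\times\Oring_L$. The key input I plan to invoke is Grothendieck's result (SGA 7, IX 3.3) that for semistable $A$ this map is an isomorphism on identity components. With that in hand, the special fibre $\mathcal{A}_{L,v}^0$ is the base change to the residue field of $L$ of $\mathcal{A}_v^0$. Being an extension of an abelian variety by a torus is preserved by field extension, so $A_L$ is semistable and its toric rank equals that of $A$. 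In particular the toric rank stays positive, so the reduction of $A_L$ is again bad.

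If a proof of Grothendieck's isomorphism is wanted, I would argue via the Grothendieck–Ogg–Shafarevich criterion. Semistability is equivalent to the inertia group acting unipotently on $T_\ell A$, with $(T_\ell A)^{I}$ of rank $2g - \dim T$. Inertia of $L$ is an open subgroup of the inertia of $K$, and a unipotent operator has the same fixed space as each of its nonzero powers. Hence the rank of the fixed space is unchanged, the toric rank is preserved, and the reduction stays bad.
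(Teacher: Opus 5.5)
The paper states this theorem without proof; it is quoted as a standard fact alongside Grothendieck's semistable reduction theorem, so there is no argument in the text to compare yours with. Your proposal is correct and is the standard proof.

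\begin{itemize}
\item Part (1) is the usual argument that an abelian scheme over a discrete valuation ring is the N\'eron model of its generic fibre: apply the valuative criterion at the generic points of the special fibre, then Weil's extension theorem (see Bosch--L\"utkebohmert--Raynaud, \emph{N\'eron Models}, \S1.2, Prop.~8).
\item Part (2) is base change.
\item Your treatment of (3) via SGA~7, Exp.~IX, 3.3 is correct. The final step uses the fact, recorded in the paper, that toric rank $0$ characterises good reduction among semistable abelian varieties.
\end{itemize}

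Two small corrections concern your last paragraph.

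First, the Galois characterisation you invoke is Grothendieck's criterion for semistable reduction (SGA~7, Exp.~IX, 3.5). The Grothendieck--Ogg--Shafarevich formula is about conductors, and the criterion for good reduction is N\'eron--Ogg--Shafarevich.

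Second, that argument does not prove the isomorphism $\mathcal{A}^0\times_{\OK}\Oring_L\cong\mathcal{A}_L^0$. It is an independent proof of (3) that bypasses it. It rests on identifying $V_\ell(A)^{I_K}$ with the $\ell$-adic Tate module of the geometric special fibre of the N\'eron model, which has dimension $2g-t$ in the semistable case. As a proof of (3) it is sound:
\begin{itemize}
\item every $\sigma\in I_K$ has a positive power in the finite-index subgroup $I_L$;
\item in characteristic $0$, a unipotent operator has the same fixed space as its nonzero powers;
\item hence $V_\ell(A)^{I_L}=V_\ell(A)^{I_K}$.
\end{itemize}
For the \emph{bad} part you can even skip toric ranks: if $I_L$ acted trivially on $V_\ell(A)$, so would $I_K$, contradicting N\'eron--Ogg--Shafarevich.
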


We conclude this section by giving some useful information on the reduction type of a Jacobian. To do so, we need a notion of good and semistable reduction also for curves.
\begin{definition}
Let $C$ be a nice curve over a local field $K$. We say that $C$ has \emph{good reduction} if there exists a smooth proper relative curve over $\Spec(\Oring_{K})$ with generic fibre isomorphic to $C$. We say that $C$ has \emph{semistable reduction}, or that it \emph{admits a semistable model}, if there is a proper flat model $\mathcal{C}$ over $\Oring_{K}$ whose special fibre has only ordinary double points as singularities.
\end{definition}

\begin{theorem}[Good reduction of Jacobians]
Let $C/K$ be a nice curve of genus $g \geq 2$ over a local field. The Jacobian $J$ of $C$, considered as an abelian variety over $K$, has good reduction whenever $C$ does, but the converse implication does not hold: it is possible for $J$ to have good reduction even when $C$ has bad reduction. Moreover, if $C$ has semistable reduction, then so does $J$.
\end{theorem}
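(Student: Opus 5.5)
The theorem has three parts: good reduction of $C$ forces good reduction of $J$; the converse fails; semistable reduction of $C$ forces semistable reduction of $J$. The common tool is the relative Picard functor of a model $\mathcal{C}\to\Spec\OK$, compared with the Néron model $\mathcal{J}$ of $J$ through the Néron mapping property.

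\emph{Good reduction.} Let $\mathcal{C}\to\Spec\OK$ be smooth and proper with generic fibre $C$. After an étale (unramified) extension of $K$ we may assume $\mathcal{C}$ has an $\OK$-section, since the smooth special fibre has a rational point over a finite extension of $\F_v$ and Hensel's lemma lifts it. Good reduction descends along unramified extensions because the Néron model commutes with étale base change, so nothing is lost. The steps are then:
\begin{enumerate}
\item Invoke representability of the relative Picard functor $\Pic^0_{\mathcal{C}/\OK}$ for a smooth proper curve with a section (Grothendieck).
\item Check that $\Pic^0_{\mathcal{C}/\OK}$ is smooth: the obstruction space is $H^2(\mathcal{C}_s,\Oring)=0$ on a curve.
\item Check that it is proper: its fibres are the Jacobians of the fibres, which are proper, and the valuative criterion holds because line bundles on the regular scheme $\mathcal{C}$ extend from the generic fibre by taking closures of divisors.
\item Conclude that $\Pic^0_{\mathcal{C}/\OK}$ is an abelian scheme with generic fibre $J$, so $J$ has good reduction.
\end{enumerate}
The main obstacle is the representability step, which I would quote rather than prove.

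\emph{Failure of the converse.} I would give an explicit example. Take a genus $2$ curve $C$ whose stable reduction consists of two elliptic curves meeting in a single node, for example $y^2=(x^3+ax+b)(\pi^6 x^3+\dots)$ suitably rescaled. Such a $C$ has bad reduction, since no smooth model exists: the stable model is unique and is singular. On the other hand $J$ has good reduction, with reduction equal to the product of the two elliptic curves. To verify this I would use the toric-rank statement from the semistable part below: the toric rank equals the first Betti number of the dual graph, which is a tree here, so it is $0$.

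\emph{Semistable reduction.} Given a semistable model $\mathcal{C}$, I would resolve it to a regular semistable model, replacing each node of thickness $n$ by a chain of $\mathbb{P}^1$'s. Raynaud's theorem then identifies $\mathcal{J}^0$ with $\Pic^0_{\mathcal{C}/\OK}$; this identification is the main obstacle and would be cited. The special fibre of $\Pic^0_{\mathcal{C}/\OK}$ is the generalized Jacobian of the nodal curve $\mathcal{C}_s$. Normalizing $\mathcal{C}_s$ gives the exact sequence
\[
0\to T\to \Pic^0(\mathcal{C}_s)\to \prod_i \Jac(\tilde{C}_i)\to 0,
\]
where $T$ is a torus whose character group is $H_1$ of the dual graph. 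This is exactly the extension of an abelian variety by a torus required in Definition \ref{def:semistable reduction}, so $J$ has semistable reduction.
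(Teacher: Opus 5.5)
The paper gives no proof of this theorem. It is recorded as a standard fact, and the toric-rank formula stated right after it is quoted from Bosch--L\"utkebohmert--Raynaud. Your outline is the standard argument and is correct in substance.

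\emph{First assertion.} All your steps work: passing to an unramified extension to get a section, descending via compatibility of N\'eron models with \'etale base change, smoothness from $H^2(\mathcal{C}_s,\Oring)=0$, and properness via closures of divisors. For the valuative criterion, run the divisor argument on $\mathcal{C}\times_{\OK}R'$ for an arbitrary discrete valuation ring $R'$ over $\OK$; this is smooth over $R'$, hence regular.

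\emph{Last assertion.} Raynaud's theorem applies because the resolved model is regular with reduced special fibre, and subdividing edges does not change $H_1$ of the dual graph. Over the finite residue field the torus $T$ may be non-split, and the components need not be geometrically irreducible. So read the sequence over $\overline{\F}$, or with $\Pic^0$ of the normalization in place of $\prod_i\Jac(\tilde C_i)$; either way this is harmless for Definition~\ref{def:semistable reduction}. The first assertion is also the special case of the last one with a single vertex and no edges. Your direct Picard-scheme argument has the merit of avoiding Raynaud's theorem.

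\emph{The counterexample.} This is the one place where the write-up is not yet a proof. Your argument that $C$ has bad reduction, by uniqueness of the stable model, is fine. But to get good reduction of $J$ over $K$, rather than merely potentially good reduction, the semistable model with special fibre $E_1\cup E_2$ must exist over $\OK$ itself. The phrase ``suitably rescaled'' hides exactly this.

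It does hold for your equation under the following hypotheses. Take $p\ge 5$, a uniformiser $\pi$, and $a,b,c\in\OK$ with $c$ and $4a^3+27b^2$ units, and set $C\colon y^2=(x^3+ax+b)(\pi^6x^3+c)$. Then:
\begin{itemize}
\item On $|x|\le 1$ the reduction is $E_1\colon y^2=c(x^3+ax+b)$.
\item The substitution $x=\pi^{-2}u$, $y=\pi^{-3}w$ gives $w^2=(u^3+\pi^4au+\pi^6b)(u^3+c)$. Its reduction $w^2=u^3(u^3+c)$ has normalization $E_2\colon z^2=u(u^3+c)$, where $z=w/u$.
\item On the annulus $1<|x|<|\pi|^{-2}$ one has $y^2=cx^3h$ with $h$ a $1$-unit. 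Since $x^3$ is an odd power, the preimage is a single annulus, so $E_1$ and $E_2$ meet in one node and the dual graph is a tree.
\end{itemize}

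The exponent matters: it must be divisible by $6$ for both substitutions to be defined over $K$. With $\pi x^3+c$, for instance, the curve only acquires such a model over $K(\pi^{1/6})$. Inertia then acts on $E_2$ through an automorphism of order $6$, and $J$ has bad, only potentially good, reduction over $K$.

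Finally, deducing good reduction from toric rank $0$ uses the fact that if the special fibre of $\mathcal{J}^0$ is an abelian variety then $\mathcal{J}$ is an abelian scheme; in particular the component group is trivial. The paper builds this into its definition of toric rank, so quoting it is fine.
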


Finally, we present a result that determines the toric rank of $\Jac(C)$ in the case where $C$ has semistable reduction. To state it, we need the notion of \emph{dual graph}.

\begin{definition}[Dual graph of the special fibre]
Let $K$ be a local field with residue field $\F$ and let $C/K$ be a nice curve with semistable model $\mathcal{C} \to \Spec \OK$. The \emph{dual graph of the special fibre} of $\mathcal{C}$, denoted by $X(\mathcal{C} \times_{\OK} \overline{\F})$, is the multi-graph having a vertex for every irreducible component of $\mathcal{C} \times_{\OK} \overline{\F}$ and an edge (possibly, a loop) between components $C_i$ and $C_j$ if $C_i$ and $C_j$ meet at a double point. 
\end{definition}

\begin{remark}
In a picture of the special fibre, one draws the irreducible components as curves meeting at double points; in the dual graph, these components become vertices and the double points become edges.
\end{remark}

The next statement describes the toric rank of the reduction of $\operatorname{Jac}(C)$ in terms of the group $H^1(X(\mathcal{C}_{\overline{\mathbb{F}}}),\Z)$: this is the first singular cohomology group of the topological realisation of the graph $X(\mathcal{C}_{\overline{\mathbb{F}}})$, obtained by realising each edge as an interval and gluing its endpoints to the corresponding vertices. If $X(\mathcal{C}_{\overline{\mathbb{F}}})$ is connected and has $r$ vertices and $s$ edges, this group is free of rank $s-r+1$, the number of independent cycles.
\begin{theorem}[{\cite[Example 8 p.~246]{MR1045822}}]
Let $K$ be a local field with residue field $\F$ and let $C/K$ be a nice curve with semistable reduction. Let $\mathcal{C}/\OK$ be a semistable model of $C$. The Jacobian $\operatorname{Jac}(C)$ has semistable reduction, and its toric rank is equal to the rank of $H^1(X(\mathcal{C}_{\overline{\F}}), \Z)$.
\end{theorem}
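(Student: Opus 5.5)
The plan is to identify the identity component of the special fibre of the Néron model of $J=\operatorname{Jac}(C)$ with the generalised Jacobian $\Pic^0$ of the special fibre $\mathcal{C}_{\F}$, and then to compute the toric part of that generalised Jacobian directly from the dual graph. We may first replace $K$ by its maximal unramified extension, or simply base-change the special fibre to $\overline{\F}$. This is harmless because toric rank is a geometric invariant and forming Néron models commutes with étale base change. We may also replace $\mathcal{C}$ by a regular semistable model. Resolving an ordinary double point of thickness $n$ inserts a chain of $n-1$ projective lines, which does not change $H^1$ of the dual graph, since a subdivided edge contributes the same cycle.

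The first key step is Raynaud's theorem. For a proper flat regular model $\mathcal{C}/\OK$ whose special fibre is reduced, and in particular semistable, the relative Picard functor $\Pic^0_{\mathcal{C}/\OK}$ is representable by a smooth separated group scheme. This scheme is exactly the identity component $\mathcal{J}^0$ of the Néron model of $J$. Hence
\[
\mathcal{J}^0_{\overline{\F}} \cong \Pic^0(\mathcal{C}_{\overline{\F}}),
\]
the group of line bundles of degree $0$ on every irreducible component. I will take this as the input from the literature (it is the content of the cited reference), since nothing earlier in the paper gives it.

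The second step is to compute $\Pic^0$ of a nodal curve $X=\mathcal{C}_{\overline{\F}}$. Let $\nu\colon \tilde X=\coprod \tilde X_i\to X$ be the normalisation, where $\tilde X_i$ is the normalisation of the $i$-th component. The exact sequence of sheaves
\[
1\to\mathcal{O}_X^\times\to\nu_*\mathcal{O}_{\tilde X}^\times\to \bigoplus_{\text{nodes }p}\overline{\F}^\times\to 1
\]
gives a long exact sequence in cohomology, which yields
\[
0\to T\to \Pic^0(X)\to \prod_i \Pic^0(\tilde X_i)\to 0 ,
\]
where
\[
T=\operatorname{coker}\Bigl(\prod_{\text{vertices}}\G_m\to\prod_{\text{edges}}\G_m\Bigr).
\]
The right-hand term is an abelian variety $B$, a product of Jacobians of smooth curves. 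This already shows that $J$ has semistable reduction.

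For the toric part, note that the map $\prod_v\G_m\to\prod_e\G_m$ is the coboundary of the graph tensored with $\G_m$. On character groups, $T$ therefore corresponds to the kernel of the boundary map $\Z^{E}\to\Z^{V}$, which is $H_1$ of the graph. Since $X$ is connected, the graph is connected, so this group is free of rank $s-r+1$, which equals the rank of $H^1(X(\mathcal{C}_{\overline{\F}}),\Z)$. Thus $\dim T$ is this rank, as claimed.

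The main obstacle is the first step, namely justifying that $\Pic^0_{\mathcal{C}/\OK}$ really is the identity component of the Néron model. This requires regularity of the model and Raynaud's representability results. Everything after that is linear algebra on the dual graph.
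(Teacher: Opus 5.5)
Your proposal is correct. The paper gives no proof of its own and only cites Bosch--L\"utkebohmert--Raynaud, and your argument is essentially the one behind that reference: first resolve the $A_{n-1}$-singularities, which only subdivides edges of the dual graph; then use Raynaud's identification of $\Pic^0_{\mathcal{C}/\OK}$ with the identity component of the N\'eron model for a regular model with reduced special fibre; finally, the normalisation sequence exhibits the special fibre as an extension of $\prod_i \Pic^0(\tilde X_i)$ by a torus whose character group is $H_1$ of the dual graph. What you leave implicit is routine: that the special fibre is connected, and that the normalisation sequence must be run functorially so that it is exact as a sequence of algebraic groups.
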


\begin{example}[Hyperelliptic curves with toric rank 1]
Let $C \colon y^2=f(x)$ be a hyperelliptic curve over $\Q$ with $f\in\Z[x]$. Suppose that some prime $p \neq 2$ not dividing the leading coefficient of $f$ divides the discriminant of $f(x)$ exactly once: then $\Jac(C)$ has bad semistable reduction of toric rank $1$ at $p$. Indeed, the assumption implies that the equation $y^2=f(x)$ gives a semistable model of $C$ over $\Z_p$, with special fibre $y^2=(x-a)^2g(x)$, where $g(x)$ has no multiple roots modulo $p$ and $(g(x),x-a)=1$. This implies that the dual graph is a single vertex with a loop (the only irreducible component intersects itself), so that $H^1(X(\mathcal{C}_{\overline{\F}_p}), \Z)$ has rank 1.
\end{example}

\subsection{Torsion points and the Tate module}\label{subsec: torsion and Tate module}
We now turn to the study of torsion points of abelian varieties. We begin with the following definition:
\begin{definition}
Let $A$ be an abelian variety over the field $K$ and let $n$ be a positive integer. We define
$A[n]$ to be the kernel of $[n] \colon A(\overline{K}) \to A(\overline{K})$. We call $A[n]$ the \emph{group of $n$-torsion points of $A$}.
\end{definition}
\begin{remark}
As observed in Definition \ref{def:torsion AV 1}, $A[n]$ can in fact be defined as a group scheme over $K$: indeed, the map $[n]:A \to A$ is an isogeny (see Definition \ref{def:homs and isogenies}) defined over $K$, hence its kernel is a subgroup scheme of $A$ defined over $K$. This point of view leads to more natural (or at least more intrinsic) definitions, but we shall not pursue it further here. However, it is often fruitful from an arithmetic point of view to study the scheme $A[n]$ when $A$ is defined over a more general base scheme than a field.
\end{remark}

\begin{theorem}
Let $K$ be a field of characteristic zero and $A/K$ be a $g$-dimensional abelian variety. Then $A[n]$ is isomorphic to $\left(\Z/n\Z\right)^{2g}$ as an abstract group.
\end{theorem}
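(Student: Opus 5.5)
The plan is to reduce to the complex case, where Proposition \ref{prop:TorsionCpxAV} already gives $A[n]\cong(\Z/n\Z)^{2g}$. A second, purely algebraic route via the degree of $[n]$ serves as a cross-check.

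\emph{Step 1 (reduction to a finitely generated field).} The variety $A$, together with $m$, $i$ and $0_A$, is defined by finitely many equations. Hence there is a subfield $K_0\subseteq K$, finitely generated over $\Q$, and an abelian variety $A_0/K_0$ with $(A_0)_K\cong A$ compatibly with the group structures. Since $A[n]$ is finite étale in characteristic zero (Remark \ref{rmk:EtalenessIsogenies}), its $\overline{K}$-points all lie in $A_0(\overline{K_0})$, where $\overline{K_0}$ is the algebraic closure of $K_0$ inside $\overline{K}$. Thus $A[n]\cong A_0[n](\overline{K_0})$ as abstract groups.

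\emph{Step 2 (embedding into $\C$).} Because $K_0$ has finite transcendence degree over $\Q$ and $\C$ has uncountable transcendence degree, there is a field embedding $\overline{K_0}\hookrightarrow\C$. The base change $(A_0)_\C$ is a complex abelian variety of dimension $g$. For a finite étale scheme over an algebraically closed field, the set of points does not change under extension to a larger algebraically closed field. This gives $A_0[n](\overline{K_0})=A_0[n](\C)$ as groups. Proposition \ref{prop:TorsionCpxAV} then identifies this group with $(\Z/n\Z)^{2g}$.

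\emph{Step 3 (algebraic cross-check).} By the proposition on $\deg[n]$, the map $[n]$ has degree $n^{2g}$, and it is separable in characteristic $0$. So $\#A[n]=n^{2g}$ for every $n$. A finite abelian group $G$ with $\#G[m]=m^{2g}$ for all $m\mid n$ must be $(\Z/n\Z)^{2g}$. To see this, write $G\cong\prod_p\prod_i\Z/p^{a_i}$. Then $\#G[p]=p^{r_p}$ forces the number of cyclic $p$-factors to be $r_p=2g$. Counting $\#G[p^k]=p^{2gk}$ for $p^k\mid n$, together with the fact that every element is killed by $n$, forces every exponent $a_i$ to equal $v_p(n)$. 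This route avoids the transcendental input, so I would present Step 3 as the main proof and Steps 1--2 as the conceptual picture.

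The main obstacle is Step 1: justifying that $A$ descends to a finitely generated field with the full group structure intact. The spreading-out argument of Theorem \ref{thm:FinitenessBadReduction} handles this in the same way, which is another reason to favour Step 3.
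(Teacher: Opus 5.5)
Your proposal is correct. Steps 1--2 are exactly the paper's proof of this statement: descend $A$ to a finitely generated subfield, embed that field in $\C$, and apply Proposition \ref{prop:TorsionCpxAV}. You are more careful than the paper about why the torsion points do not change when passing from $\overline{K_0}$ to $\overline{K}$ or to $\C$; finiteness of $A_0[n]$ already suffices for this, and étaleness is not needed. The descent in Step 1 is a standard limit argument and less of an obstacle than you suggest; the paper dispatches it in a sentence.

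Step 3, which you would make the main proof, is the ``more conceptual'' argument that the paper gives only later, for Theorem \ref{thm:StructureTorsionSubgroupsGeneral}. There, $[n']$ is étale of degree $(n')^{2g}$ for every $n'\mid n$, and this forces the group structure. Your count of cyclic $p$-factors spells out what the paper compresses into ``the only possible group structure''.

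The trade-off is as you describe. The transcendental route is cheap once analytic uniformisation is available, but it is tied to characteristic zero and needs the descent step. The algebraic route works verbatim in any characteristic for $n$ prime to $\operatorname{char} K$. In exchange, it relies on harder algebraic inputs: the computation $\deg[n]=n^{2g}$ (via Corollary \ref{cor:ActionIntegersLineBundles} and an intersection-number argument) and the étaleness of $[n]$ from Remark \ref{rmk:EtalenessIsogenies}.
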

We give a quick argument; for a more conceptual one, see the proof of Theorem \ref{thm:StructureTorsionSubgroupsGeneral} below.
\begin{proof}
All the data that go into the definition of the abelian variety (the equations defining it in projective space, the multiplication map, the inverse, etc.) only involve finitely many elements of $K$. Hence, there is a subfield $F=\Q(c_1,\dotsc,c_r)\subseteq K$, finitely generated as a field extension of $\Q$, such that $A$ is defined over $F$. Any finitely generated field extension of $\Q$ can be embedded in $\C$, so it is enough to consider the case $K=\C$. The result then follows from Proposition \ref{prop:TorsionCpxAV}.
\end{proof}
\begin{remark}
The reduction to the case $K=\C$ in the above proof is a special case of the \emph{Lefschetz principle}: true statements of the first order theory of fields about $\C$ are true for any algebraically closed field $K$ of characteristic zero.
\end{remark}

More generally, one has the following: 
\begin{theorem}\label{thm:StructureTorsionSubgroupsGeneral}
Let $K$ be a field and $A/K$ be a $g$-dimensional abelian variety. Let $n$ be a positive integer prime to the characteristic of $K$: then $A[n]$ is isomorphic to $\left(\Z/n\Z\right)^{2g}$ as an abstract group.
\end{theorem}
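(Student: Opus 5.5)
We may assume $K$ is algebraically closed, since $A[n]$ is defined using $\overline{K}$-points. The plan is to use the degree computation $\deg[n]=n^{2g}$ together with separability of $[n]$ when $\operatorname{char} K\nmid n$, and then pin down the group structure by a counting argument over divisors of $n$.

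\emph{Step 1: the order of $A[n]$.} By the proposition computing the degree of multiplication maps, $[n]$ is an isogeny of degree $n^{2g}$. Since $n$ is prime to $\operatorname{char}K$, Remark \ref{rmk:EtalenessIsogenies} shows that $[n]$ is étale. The key input is that $[n]$ acts as multiplication by $n$ on the tangent space at $0$, and this is invertible. Hence $[n]$ is separable, the kernel $\ker[n]$ is a reduced (étale) finite group scheme, and its rank equals the number of geometric points. By the remark identifying the degree of an isogeny with the order of its kernel, this gives
\[
\#A[n]=\#\{P\in A(\overline{K})\colon nP=0\}=n^{2g}.
\]
The same holds with $n$ replaced by any divisor $d$ of $n$, since such $d$ is also prime to $\operatorname{char}K$.

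\emph{Step 2: from orders to structure.} The group $A[n]$ is finite abelian and killed by $n$. It is also commutative, by Proposition \ref{prop: AV are commutative}. Moreover $\#A[d]=d^{2g}$ for every $d\mid n$. I would show that these conditions force $A[n]\cong(\Z/n\Z)^{2g}$. Write $A[n]\cong\prod_{\ell\mid n}A[\ell^{a_\ell}]$, where $\ell^{a_\ell}\| n$; this reduces us to $n=\ell^a$. A finite abelian $\ell$-group $G$ killed by $\ell^a$ has the form $\prod_i \Z/\ell^{e_i}$ with each $e_i\le a$. The number of cyclic factors equals $\log_\ell\#G[\ell]$, which is $2g$ because $\#A[\ell]=\ell^{2g}$. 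There are therefore exactly $2g$ factors, each of order at most $\ell^a$, whose product has order $\ell^{2ag}$. This forces every $e_i=a$, so $A[\ell^a]\cong(\Z/\ell^a\Z)^{2g}$. Combining over the primes $\ell\mid n$ gives the statement.

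\emph{Main obstacle.} The only nontrivial point is the equality between the degree and the number of geometric points of the kernel, i.e., separability of $[n]$. I would justify it through the tangent map: the differential of $[n]$ at $0$ is $n\cdot\mathrm{id}$, because the differential of $m$ at $(0,0)$ is addition. Since $n\ne0$ in $K$, the map $[n]$ is étale at $0$, and by translation-equivariance it is étale everywhere. Its fibre over $0$ is then reduced, and for a finite étale map the degree equals the cardinality of any geometric fibre.
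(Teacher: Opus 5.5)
Your proof is correct and follows the same route as the paper: $[d]$ is étale of degree $d^{2g}$ for every $d \mid n$, so $\#A[d]=d^{2g}$, and this forces the group structure. Your Step 2, which reduces to $\ell$-primary parts and counts cyclic factors via $A[\ell]$, simply spells out the elementary-divisor argument that the paper compresses into ``the only possible group structure for $A[n]$ is $(\Z/n\Z)^{2g}$''.
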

\begin{proof}
For any positive divisor $n'$ of $n$, the isogeny $[n']$ is finite, étale (this uses the assumption $(n,\operatorname{char}(K))=1$, see Remark \ref{rmk:EtalenessIsogenies}), and of degree $(n')^{2g}$. Hence $\ker [n']$ consists of $(n')^{2g}$ distinct geometric points (it is an étale group scheme of rank $(n')^{2g}$, see Remark \ref{rmk: etale group schemes}). Since this holds for every divisor $n'$ of $n$, the only possible group structure for $A[n]$ is $(\Z/n\Z)^{2g}$.
\end{proof}

\begin{example}
The assumption $\operatorname{char}(K) \nmid n$ is essential. For example, if $E/\F_p$ is an elliptic curve, then $E[p](\overline{\F}_p)$ can never have order $p^2$: the group $E[p](\overline{\F}_p)$ can either be trivial, in which case we say that $E$ is \emph{supersingular}, or it can have order $p$, in which case we say that $E$ is \emph{ordinary}. We explain why $E[p](\overline{\F}_p)$ cannot consist of $p^2$ points. Fix a Weierstrass equation for $E$, say
\[
E \colon Y^2Z = X^3 + aXZ^2+ bZ^3 \subset \bbP^2_{\F_p}.
\]
For simplicity, we work with a short Weierstrass equation, hence implicitly assume $p \neq 2, 3$; the argument can be extended to all characteristics by considering a full Weierstrass equation. There is a Frobenius morphism 
\[
\begin{array}{cccc}
\operatorname{Frob} \colon & E & \to & E \\
& [X \colon Y \colon Z] & \mapsto & [X^p \colon Y^p  \colon Z^p],
\end{array}
\]
which can be seen to be purely inseparable and of degree $p$. Since $\operatorname{Frob}$ sends the origin $[0 {:} 1 {:} 0]$ to itself, it is a homomorphism of abelian varieties by Proposition \ref{prop:MorAV}. Since it is clearly a non-constant map between curves, it is surjective and therefore an isogeny. Proposition \ref{prop:FactorThroughDegree} then implies that we can write $[p] = V \circ \operatorname{Frob}$ for another isogeny $V$ (this is a well-studied isogeny called the \emph{Verschiebung} map). Multiplicativity of the degree shows that $p^2 = \deg [p] = \deg V \cdot \deg \operatorname{Frob} = \deg V \cdot p$, hence $\deg V =p$ and the inverse image of any point via $V$ consists of at most $p$ points. On the other hand, notice that $\operatorname{Frob}$ is injective on $E(\overline{\F}_p)$ (it acts as an automorphism on the coordinates), so $[p]^{-1}([0 {:} 1 {:} 0]) = V^{-1}([0 {:} 1 {:} 0])$ consists of at most $p$ points as claimed. Finally, since $[p]^{-1}([0 {:} 1 {:} 0])$ is a subgroup of $E(\overline{\F}_p)$ of $p$-power order, the bound on its size implies that it has order $1$ or $p$.
\end{example}

So far, we have discussed the torsion points of order $n$ for a fixed value of $n$. We now package together all torsion points of $\ell$-power order, for a fixed prime $\ell$, obtaining the so-called $\ell$-adic Tate module of the abelian variety.

\begin{definition}[Tate module]
Let $A$ be an abelian variety over a field $K$ and let $\ell$ be a prime number different from $\operatorname{char}(K)$. The \emph{$\ell$-adic Tate module of $A$} is the projective limit
\[
T_\ell(A) = \varprojlim_{n \to \infty} A[\ell^n],
\]
where the transition morphisms are given by multiplication by $\ell$.  The \emph{rational Tate module} is $V_\ell(A)  \coloneq  T_\ell(A) \otimes_{\Z_\ell} \Q_\ell$.
\end{definition}
\begin{remark}
By definition, an element of $T_\ell(A)$ is an infinite sequence $\{a_0, a_1, a_2, \dotsc\}$ of torsion points of $A$ such that
\begin{enumerate}[label={\arabic*.}, wide, labelindent=0pt]
\item $a_i \in A[\ell^i]$ for all $i \geq 0$;
\item $\ell a_{i+1} = a_i$ for all $i \geq 0$.
\end{enumerate}
\end{remark}

\begin{remark}\label{rmk:TellIsFree}
We have already seen that $A[\ell^n] \cong \left(\Z/\ell^n\Z\right)^{2g}$. This isomorphism is not canonical, but it is possible to make compatible choices of isomorphisms for different values of $n$. Passing to the limit in $n$ we obtain a (non-canonical) isomorphism $T_\ell(A) \cong \Z_\ell^{2g}$.
\end{remark}

It is sometimes useful to consider an adelic variant of the above construction, which for simplicity we only introduce in the case of fields of characteristic zero.
\begin{definition}\label{def:Adelic Tate module}
Let $K$ be a field of characteristic zero and let $A$ be an abelian variety over $K$. The \emph{adelic Tate module} is the projective limit of the groups $A[n]$ with transition maps given by $A[km] \xrightarrow{[k]} A[m]$. We have
\[
\widehat{T}(A) = \varprojlim_{n} A[n] = \prod_{\ell \text{ prime}} T_\ell(A) \cong \widehat{\Z}^{2g},
\]
where $\widehat{\Z}=\varprojlim_n \Z/n\Z \cong \prod_{\ell \text{ prime}}\Z_\ell$ is the \emph{profinite completion} of the ring of integers.
\end{definition}

\begin{remark}
Taking the limit over those $n$ that are powers of a fixed prime $\ell$, Theorem \ref{thm:PropertiesWeil} (1) implies the existence of a perfect bilinear Weil pairing
\[
\langle \cdot, \cdot \rangle_\ell \colon T_\ell(A) \times T_\ell(A^\vee) \to \varprojlim_k \mu_{\ell^k} =: \Z_\ell(1).
\]
The operation on $\Z_\ell(1)$ will be written additively. In particular, the trivial element will be denoted by $0$, so the non-degeneracy property of the Weil pairing becomes the statement that if $\langle x, y \rangle_\ell=0$ holds for all $y \in T_\ell(A^\vee)$, then $x=0$ (and symmetrically, $\langle x, y \rangle_\ell=0$ for all $x \in T_\ell(A)$ implies $y=0$).

By composing with an isogeny corresponding to a polarisation $\mathcal{L}$ we also obtain a skew-symmetric pairing
\[
\langle \cdot, \cdot \rangle_{\ell,\mathcal{L}} \colon T_\ell(A) \times T_\ell(A) \to \Z_\ell(1).
\]
Explicitly, we set
\[
\langle x,y\rangle_{\ell,\mathcal{L}}=\langle x,\lambda_{\mathcal{L}}(y)\rangle_\ell,
\]
where for $y=(y_0,y_1,y_2,\dotsc)$ we write
\[
\lambda_{\mathcal{L}}(y)=(\lambda_{\mathcal{L}}(y_0),\lambda_{\mathcal{L}}(y_1),\lambda_{\mathcal{L}}(y_2),\dotsc).
\]
Since $\lambda_{\mathcal{L}}$ is an isogeny, the induced map $V_\ell(A)\to V_\ell(A^\vee)$ is an isomorphism, so extending scalars to $\Q_\ell$ we obtain that $\langle x, y\rangle_{\ell,\mathcal{L}}$ gives a non-degenerate bilinear form on $V_\ell(A)$.

\end{remark}

\begin{corollary}\label{cor:RosatiInvolution}
The Rosati map from Definition \ref{def:Rosati-involution} satisfies $(\varphi)^{\dagger \dagger}=\varphi$.
\end{corollary}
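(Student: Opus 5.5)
The plan is to characterise $\varphi^\dagger$ as the adjoint of $\varphi$ for the skew-symmetric pairing $\langle \cdot,\cdot\rangle_{\ell,\mathcal{L}}$ on $V_\ell(A)$, and then to use the standard fact that taking adjoints twice for a non-degenerate pairing with a symmetry property returns the original map. Throughout, fix a prime $\ell \neq \operatorname{char} K$ and let $\lambda=\lambda_{\mathcal{L}}$ be the polarisation. We extend every endomorphism $\Q$-linearly to $V_\ell(A)$; here we use that $\End_K(A)\otimes\Q \to \End(V_\ell(A))$ is injective. This holds because a nonzero endomorphism $\psi$ with $T_\ell\psi=0$ would kill all of $A[\ell^\infty]$. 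That set is Zariski dense, being the union of the finite kernels of $[\ell^n]$ of orders $\ell^{2ng}\to\infty$, whose Zariski closure is a positive-dimensional closed subgroup; in fact it must be all of $A$. Hence $\psi=0$.

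First step: compute the adjoint. For $x,y\in T_\ell(A)$, property 5 of Theorem \ref{thm:PropertiesWeil}, passed to the limit, gives
\[
\langle \varphi x, \lambda y\rangle_\ell = \langle x, \varphi^\vee\lambda y\rangle_\ell .
\]
Since $\widehat{\lambda}\lambda=[d]$ and $\lambda$ is invertible on $V_\ell$, we have $\lambda^{-1}=\tfrac1d\widehat\lambda$ on $V_\ell(A^\vee)$. Therefore
\[
\varphi^\vee\lambda = \lambda\cdot\tfrac1d\widehat\lambda\varphi^\vee\lambda=\lambda\varphi^\dagger
\]
on $V_\ell(A)$. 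This yields
\[
\langle \varphi x, y\rangle_{\ell,\mathcal{L}}=\langle x,\varphi^\dagger y\rangle_{\ell,\mathcal{L}}
\]
for all $x,y\in V_\ell(A)$, with the pairing extended $\Q_\ell$-bilinearly. By $\Q$-linearity the same identity holds for all $\varphi\in\End_K(A)\otimes\Q$, so the identity characterises $\dagger$ on the whole algebra.

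Second step: double adjoint. Write $B=\langle\cdot,\cdot\rangle_{\ell,\mathcal{L}}$, which is skew-symmetric by property 6 and non-degenerate on $V_\ell(A)$. Then
\[
B(x,\varphi^{\dagger\dagger}y)=B(\varphi^\dagger x,y)=-B(y,\varphi^\dagger x)=-B(\varphi y,x)=B(x,\varphi y).
\]
By non-degeneracy, $\varphi^{\dagger\dagger}=\varphi$ on $V_\ell(A)$, and by the injectivity established at the start, $\varphi^{\dagger\dagger}=\varphi$ in $\End_K(A)\otimes\Q$.

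The main obstacle is justifying the faithfulness of $\End_K(A)\otimes\Q\to\End(V_\ell A)$, that is, the Zariski density of $\ell$-power torsion. I would argue it as above: the Zariski closure of $A[\ell^\infty]$ is a closed subgroup containing finite subgroups of unbounded order, so its identity component is a nonzero abelian subvariety $B\subseteq A$. If $B\ne A$, Poincaré's theorem (Theorem \ref{thm:Poincare}) gives a complement $C$ with $B\times C\to A$ an isogeny. Counting $\ell^n$-torsion via Theorem \ref{thm:StructureTorsionSubgroupsGeneral} then shows that $A[\ell^n]$ has order $\ell^{2ng}$, while the image of $B[\ell^\infty]$ has only about $\ell^{2n\dim B}$ elements of order dividing $\ell^n$. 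Hence the finite group of components of the closure cannot absorb the remainder, a contradiction. A smaller point is checking that property 5 passes to $T_\ell$, which is immediate from the compatibility in property 1.
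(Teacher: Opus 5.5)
Your proposal is correct and follows the paper's proof. You identify $\varphi^\dagger$ as the adjoint of $\varphi$ for the non-degenerate pairing $\langle\cdot,\cdot\rangle_{\ell,\mathcal{L}}$ on $V_\ell(A)$, using the compatibility $e_n(\varphi x, y)=e_n(x,\varphi^\vee y)$, and deduce that $\dagger$ is involutive. The extra details you supply are exactly the points the paper leaves implicit: skew-symmetry of the pairing is what makes the double adjoint equal to $\varphi$, and $\End_K(A)\otimes\Q$ acts faithfully on $V_\ell(A)$, so equality on $V_\ell(A)$ gives equality in the algebra. Your arguments for both points are sound.
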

\begin{proof}
Let $\varphi$ be an endomorphism of $A$ and fix a prime $\ell$ different from the characteristic of $K$.
Working in $V_\ell(A)$ we have
\[
\langle \varphi(P), Q \rangle_{\ell, \mathcal{L}}  = \langle \varphi(P), \lambda_{\mathcal{L}}(Q) \rangle_{\ell} = \langle P, \varphi^\vee \circ \lambda_{\mathcal{L}}(Q) \rangle_{\ell} = \langle P, \varphi^{\dagger}(Q) \rangle_{\ell,\mathcal{L}}.
\]
It follows that $\varphi \mapsto \varphi^\dagger$ is the adjunction map for the non-degenerate bilinear form $\langle \cdot, \cdot \rangle_{\ell, \mathcal{L}}$, hence it is involutive.
\end{proof}

\subsection{The Mordell--Weil theorem}
No introduction to the arithmetic theory of abelian varieties would be complete without a mention of the famous Mordell--Weil theorem:
\begin{theorem}[Mordell--Weil]
Let $A/K$ be an abelian variety over a number field. The group $A(K)$ is finitely generated, that is, there exist a number $r \in \N$ and a finite abelian group $T$ such that
\[
A(K) \cong \Z^r \oplus T.
\]
The number $r$ is called the \emph{(algebraic) rank} of $A/K$.
\end{theorem}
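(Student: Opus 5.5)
The plan follows the classical two-step strategy: first prove the \emph{weak Mordell--Weil theorem}, that $A(K)/mA(K)$ is finite for some integer $m \geq 2$, and then use a \emph{descent} with a height function to deduce finite generation. Fix a prime $\ell$, say $m=\ell=2$. The point $0_A$ may be replaced by any convenient base point throughout. Choosing $A$ via a symmetric ample line bundle $\mathcal{L}$, as in the proof that $\deg[n]=n^{2g}$, fixes the projective embedding used for heights.

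\emph{Step 1 (weak Mordell--Weil).} After a finite extension $L/K$ we may assume $A[m] \subseteq A(L)$. Finiteness of $A(L)/mA(L)$ implies finiteness of $A(K)/mA(K)$, since the kernel of $A(K)/mA(K)\to A(L)/mA(L)$ injects into $H^1(\Gal(L/K),A[m](L))$, which is finite. So assume $A[m]\subseteq A(K)$. Define the Kummer map
\[
\delta\colon A(K)/mA(K) \to \Hom\bigl(\abGal{K}, A[m]\bigr), \qquad P \mapsto \bigl(\sigma \mapsto \sigma Q - Q\bigr), \quad mQ=P.
\]
It is injective. The key point is that the extension $K(Q)/K$ is unramified outside the finite set $S$ of places dividing $m$ or of bad reduction, supplied by Theorem~\ref{thm:FinitenessBadReduction}. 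To see this at a place $v\notin S$, use the abelian scheme over $\Oring_{K_v}$: $[m]$ is finite étale there (Remark~\ref{rmk:EtalenessIsogenies} applied fibrewise), so its fibre over the section $P$ (extended via Proposition~\ref{prop: XRXK}) is finite étale over $\Oring_{K_v}$, and its points generate an unramified extension. Equivalently, reduction mod $v$ is injective on $A[m]$, so inertia, which fixes the reduction of $Q$, fixes $Q$. Hence $\delta$ lands in homomorphisms factoring through the maximal abelian exponent-$m$ extension of $K$ unramified outside $S$. That extension is finite by Kummer theory together with finiteness of the class group and finite generation of the $S$-units.

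\emph{Step 2 (heights and descent).} Introduce the Néron--Tate height $\hat h\colon A(\overline{K})\to\R_{\geq 0}$ attached to the symmetric $\mathcal{L}$, as the limit $\hat h(P)=\lim 4^{-n}h([2^n]P)$ of the naive Weil height from the embedding given by $\mathcal{L}$. The needed properties are:
\begin{enumerate}
\item $\hat h(mP)=m^2\hat h(P)$, which comes from $[m]^*\mathcal{L}\cong\mathcal{L}^{\otimes m^2}$ (Corollary~\ref{cor:ActionIntegersLineBundles}) and functoriality of Weil heights up to $O(1)$;
\item the parallelogram law, or at least $\hat h(P+Q)\leq 2\hat h(P)+2\hat h(Q)$, which comes from the theorem of the cube/square (Theorems~\ref{thm:Cube} and~\ref{thm:Square});
\item Northcott: $\{P\in A(K): \hat h(P)\leq B\}$ is finite, because $\hat h$ differs from the naive height by a bounded amount and there are finitely many points of bounded height in $\bbP^N(K)$.
\end{enumerate}
Given these, take coset representatives $Q_1,\dotsc,Q_s$ of $A(K)/mA(K)$. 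For any $P$, write $P=mP_1+Q_{i}$. Then $\hat h(P_1)\leq \tfrac{2}{m^2}\bigl(\hat h(P)+c\bigr)$ with $c=\max_i\hat h(-Q_i)$, so iterating strictly decreases the height until it drops below a fixed bound. The finite set of points of bounded height, together with the $Q_i$, therefore generates $A(K)$. The decomposition $\Z^r\oplus T$ then follows from the structure theorem for finitely generated abelian groups.

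The main obstacle is Step 2's height machinery, specifically establishing the quadratic behaviour of heights from the cube theorem with uniformly bounded error terms. Step 1 is conceptually standard once the unramifiedness argument via good reduction and étaleness of $[m]$ is in place, and the two finiteness results from algebraic number theory are assumed.
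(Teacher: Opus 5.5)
The paper states the Mordell--Weil theorem without proof; it is quoted as a classical result, so there is no argument in the paper to compare yours with. Your outline is the standard proof: weak Mordell--Weil via a Kummer map, then a descent with the N\'eron--Tate height. It is correct as a sketch.

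It fits the paper's material well:
\begin{itemize}
\item Once $A[m]\subseteq A(K)$, so that $K_m=K$, your $\delta(P)$ is exactly the Kummer representation $\kappa_{P;m}$ of the final section.
\item The unramifiedness step uses Theorem~\ref{thm:FinitenessBadReduction} and the \'etaleness of $[m]$ from Remark~\ref{rmk:EtalenessIsogenies}.
\item The quadratic behaviour of $\hat h$ rests on Corollary~\ref{cor:ActionIntegersLineBundles} and Theorem~\ref{thm:Cube}.
\end{itemize}

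What you import from outside the paper is:
\begin{itemize}
\item the height machine (functoriality of Weil heights up to $O(1)$, and Northcott);
\item finiteness of the class group and of the $S$-units modulo $m$-th powers;
\item versions over $\Oring_{K_v}$ of statements the paper only makes over fields. Namely, that $[m]$ is finite \'etale on the abelian scheme when $m$ is invertible (fibrewise \'etaleness plus the fibrewise flatness criterion, and finiteness of the proper quasi-finite map $[m]$), and that reduction is injective on $A[m]$ for $v\nmid m$.
\end{itemize}
The descent in Step~2 is fine as written. Since the heights strictly decrease while $\hat h>c$, Northcott forces termination.

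A few small points to tidy up:
\begin{itemize}
\item Kummer theory needs $\mu_m\subseteq K$. This is automatic for your choice $m=2$. For general $m$, simply include $\mu_m$ in $L$: your reduction from $L$ to $K$ works for any finite Galois $L\supseteq K(A[m])$. Alternatively, get finiteness of the maximal abelian exponent-$m$ extension unramified outside $S$ from Hermite--Minkowski.
\item Proposition~\ref{prop: XRXK} is stated over $\OK[1/N]$, not over $\Oring_{K_v}$. To extend $P$ to an $\Oring_{K_v}$-section, invoke the valuative criterion of properness directly.
\item The sentence about replacing $0_A$ by another base point plays no role in the argument and can be dropped.
\end{itemize}
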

Generally speaking, the torsion subgroup $T$ is much better understood than the rank $r$, although important open questions remain also about the torsion subgroups of abelian varieties. In the rest of this chapter we will focus on torsion points, but the reader should be aware that the study of ranks of abelian varieties over number fields is a very active area of research.

\section{The Galois representations attached to an abelian variety}
In this section we define and study several Galois representations attached to abelian varieties, and more generally commutative connected algebraic groups, over fields of arithmetic interest -- we focus on number fields and finite fields, though $p$-adic fields also enter the picture. 
From now on, $A$ is a fixed commutative connected algebraic group (usually an abelian variety) over a field $K$ (usually either a number field or a finite field).

Fix a positive integer $N$. The crucial remark is that the coordinates of the points in the finite set $A[N]$ are algebraic and carry a natural action of the absolute Galois group $\abGal{K}$. It is clear that when a Galois automorphism acts on a point in $A(\overline{K})$ we get a new point in $A(\overline{K})$, and that the fixed points of this action are precisely the $K$-points of $A$ (in particular, $0_A$ is fixed under the Galois action). 
More precisely, a Galois automorphism also acts on the finite set $A[N]$. To show this, notice that -- since the equations that define $A$ and the group law have coefficients in $K$ -- one has
\[
\sigma(P+Q) = \sigma(P)+\sigma(Q)
\]
for all $\sigma \in \abGal{K}$ and $P, Q \in A(\overline{K})$. In particular, iterating the above formula, we get $\sigma([N]P)=[N] \sigma(P)$ for all positive integers $N$, which implies that if $P$ is an $N$-torsion point, then so is $\sigma(P)$. It follows that $\abGal{K}$ acts on $A[N]$ for every $N \geq 1$. Moreover, 
the Galois action is compatible with the obvious structure of $\Z/N\Z$-module of $A[N]$. Combining the previous remarks, we see that the following definition is well-posed:
\begin{definition}[Galois representation attached to $A$]
Let $A$ be a commutative connected algebraic group
over a field $K$ and let $N$ be an integer not divisible by the characteristic of $K$.
There is a natural action of $\abGal{K}$ on $A[N]$, or, which is the same, a natural representation
\[
\rho_{A, N} \colon \abGal{K} \to \Aut _{\Z/N\Z}\left(A[N] \right).
\]
\end{definition}

Since $A$ is fixed in what follows, we will usually write $\rho_N$ instead of $\rho_{A, N}$.
\begin{remark}
The representations $\rho_{N}$ are continuous: this amounts to saying that they factor through a finite quotient of $\abGal{K}$, and this is clear, because $\rho_{N}$ becomes trivial upon extending the base field to $K(A[N])$, the field obtained from $K$ by adjoining the coordinates of all the $N$-torsion points. Since there are only finitely many $N$-torsion points and each of them has algebraic coordinates, the field $K(A[N])$ is a finite extension of $K$, and $\rho_{A, N}$ factors via the finite group $\Gal(K(A[N])/K)$.
\end{remark}

\begin{remark}
Understanding the analogues of the representations $\rho_{N}$ can be quite complicated when the characteristic $p$ of $K$ divides $N$: we shall steer clear of these difficulties and focus on the case $p \nmid N$.
\end{remark}

Moreover, it is customary to study these Galois representations \emph{one prime at a time}: while not strictly necessary, this often simplifies matters. 

Fix a prime $\ell$. We would like to describe as precisely and as concretely as possible the image of the Galois representations $\rho_{\ell^n}$. In order to dispense with the dependence on $n$, it is often useful to pass to the limit $n \to \infty$. To do so formally, note that the representations $\rho_{\ell^n} \colon \abGal{K} \to \Aut  A[\ell^n]$ form a compatible system, in the sense that
\[
\ell \cdot \rho_{\ell^n}(\sigma)(P) = \rho_{\ell^n}(\sigma)(\ell \cdot P) = \rho_{\ell^{n-1}}(\sigma)(\ell \cdot P)\quad \forall P \in A[\ell^n], \forall \sigma \in \abGal{K}.
\]
This allows us to take the inverse limit over the transition maps $[\ell] \colon A[\ell^n]\to A[\ell^{n-1}]$ and work with the so-called \emph{$\ell$-adic representation}
\[
\rho_{\ell^\infty} \colon \abGal{K} \to \Aut  \varprojlim_{n} A[\ell^n] = \Aut(T_\ell A).
\]
The representation $\rho_{\ell^\infty}$ is again continuous and will be our main object of study in this section. We shall use the informal term \emph{image of Galois} to refer to either the groups
\[
G_{\ell^n}  \coloneq  \rho_{\ell^n}(\abGal{K}) \subseteq \Aut(A[\ell^n])
\]
or their $\ell$-adic counterpart,
\[
G_{\ell^\infty}  \coloneq  \rho_{\ell^\infty}(\abGal{K}) \subseteq \Aut(T_\ell(A)).
\]
These groups do of course depend on $\ell$ but -- in a sense that will be made more precise later (see Section \ref{sect: MT conjecture} for the case of abelian varieties) -- they are conjectured to be \emph{very similar} to each other.

\begin{notation}
Let $A/K$ be an abelian variety of dimension $g$ and let $\ell \neq \operatorname{char}(K)$ be a prime number.
We fix once and for all a $\Z_\ell$-basis of $T_\ell(A)$ (recall from Remark \ref{rmk:TellIsFree} that $T_\ell(A)$ is free of rank $2g$ over $\Z_\ell$). Upon reduction modulo $\ell^n$, this basis also induces a $\Z/\ell^n\Z$-basis of the free module $A[\ell^n] \cong (\Z/\ell^n\Z)^{2g}$.
With this choice, the groups $G_\ell$ and $G_{\ell^\infty}$ can respectively be identified with subgroups of $\GL_{2g}(\F_\ell)$ and $\GL_{2g}(\Z_\ell)$; in what follows we shall make this identification without further comment.
\end{notation}

\begin{remark}
For this remark, assume that $K$ is equipped with an embedding into $\C$. When $A$ is an arbitrary commutative connected algebraic group (and not necessarily an abelian variety), one can show more generally that $T_\ell(A)$ is a free $\Z_\ell$-module of rank equal to $b_1(A)$, the first Betti number of the complex manifold $A(\C)$. Recall that the first Betti number is the dimension of the $\Q$-vector space $H_1(A(\C), \Q)$. We can therefore consider the $\ell$-adic Galois representation of $A$ as taking values in $\operatorname{GL}_{b_1(A)}(\Z_\ell)$.
\end{remark}

\begin{example}[Cyclotomic character]\label{def:cyclotomic character}
    In the case $A=\G_m$, one has $A(\C)=\C^\times$, whose first homology group is $1$-dimensional. Since the $N$-torsion of $\G_m$ is the group $\mu_N$ of $N$-th roots of unity, the $\ell$-adic Tate module $T_\ell \G_m$ of $\G_m$ is simply the inverse system of all $\ell^n$-th roots of unity, which is indeed a $1$-dimensional $\Z_\ell$-module. The $\ell$-adic Galois representation amounts to a compatible collection of homomorphisms
    \[
    \chi_{\ell^n} \colon \abGal{K} \to \Aut  \mu_{\ell^n} \cong (\Z/\ell^n\Z)^\times
    \]
    that satisfy
    \[
    \sigma(\zeta_{\ell^n}) = \zeta_{\ell^n}^{\chi_{\ell^n}(\sigma)}
    \]
    for all roots of unity $\zeta_{\ell^n}$ of order $\ell^n$ and all $\sigma \in \abGal{K}$. Passing to the limit, we obtain a character $\chi_{\ell^\infty} \colon \abGal{K} \to \Z_\ell^\times$ that satisfies
    \[
    \sigma(\zeta_{\ell^n}) = \zeta_{\ell^n}^{\chi_{\ell^\infty}(\sigma)} {\qquad \text{for every $n\geq 1$}}.
    \]
    Note that exponentiation of $\zeta_{\ell^n}$ by an $\ell$-adic number $\sum_{i \geq 0} c_i \ell^i$ (each $c_i$ in $\{0,\dotsc,\ell-1\}$) makes sense by setting 
    \[
    \zeta_{\ell^n}^{\sum_{i \geq 0} c_i \ell^i} = \zeta_{\ell^n}^{\sum_{i=0}^{n-1} c_i \ell^i}\,.
    \]
    This representation is called the \emph{cyclotomic character} and is of great importance in number theory, even outside the theory of Galois representations. 
\end{example}

Finally, it is also useful to introduce a version of these notions that takes into account all primes at the same time -- the so-called \emph{adelic} Galois representation:
\begin{definition}
    Let $K$ be a field of characteristic $0$ and $A/K$ be a connected, commutative algebraic group. The \emph{adelic Galois representation} is the inverse limit of all representations $\rho_{A, N}$ over the system of positive integers, ordered by divisibility. It is therefore a map
    \[
    \widehat{\rho}_A  \colon \abGal{K} \to \Aut  \widehat{T}(A),
    \]
    where the adelic Tate module $\widehat{T}(A)$ is defined analogously to Definition \ref{def:Adelic Tate module}. By fixing a basis of $\widehat{T}(A) \cong \widehat{\Z}^{b_1(A)}$, we can consider the adelic Galois representation as a continuous homomorphism
    \[
    \widehat{\rho}_A  \colon \abGal{K} \to \operatorname{GL}_{b_1(A)}(\widehat{\Z}).
    \]
\end{definition}

The kernels of the continuous representations $\rho_{A, \ell^\infty}$ and $\widehat{\rho}_A$ are closed subgroups of $\abGal{K}$. We denote their fixed fields respectively by $K(A[\ell^\infty])$ and $K(A[\infty])$. The field $K(A[\ell^\infty])$ is the increasing union $\bigcup_{n \geq 1} K(A[\ell^n])$, and similarly $K(A[\infty]) = \bigcup_{N\geq 1} K(A[N])$. The field $K(A[N])$ is the smallest extension of $K$ over which all the points in $A[N]$ are defined; the fields $K(A[\ell^\infty])$ and $K(A[\infty])$ have a similar description. They are called the \emph{torsion fields} of $A$.

\subsection{Examples}
\begin{enumerate}
  \item For the multiplicative group $A=\G_m$, one has $A[n]=\mu_n$, the group of $n$-th roots of unity. The corresponding torsion fields are the cyclotomic extensions $K(\mu_n)$.
  \item If $K$ is a number field and $E/K$ is an elliptic curve without complex multiplication, Serre's open image theorem \cite{SerreOIT} states that the image of $\rho_{E,\ell^\infty}$ is open in $\GL_2(\Z_\ell)$ for every prime $\ell$. Consequently, the Galois groups $\Gal(K(E[\ell^n])/K)$ are ``as large as possible'' for almost all $\ell$. More precisely, Serre's result shows that the image of the adelic representation $\widehat{\rho}_E$ is open in $\operatorname{GL}_2(\widehat{\Z})$.
  \item For abelian varieties with complex multiplication, the torsion fields are much more structured and can be described using class field theory, see for example \cite[Chap.~II]{MR1312368} for the case of elliptic curves.
\end{enumerate}

\subsection{Algebraic cycles constrain the action of Galois}
A guiding principle in the study of Galois representations is that
the image of Galois is `small' only when there is a good reason for it to be so. 
We now try to explain what this principle means in practice. Recall that an \emph{algebraic cycle} is, roughly speaking, a formal linear combination of (irreducible reduced closed) subvarieties.

\subsubsection{Weil pairing: the image of Galois is contained in \texorpdfstring{$\operatorname{GSp}$}{GSp}}

Let $A$ be principally polarised (over $K$), so that we may identify $A$ with $A^\vee$. The $\ell$-adic Weil pairing
\[
\langle \cdot, \cdot \rangle_\ell \colon T_\ell A \times T_\ell A \to \Z_\ell(1)
\]
is Galois-equivariant, that is, for all $\sigma \in \abGal{K}$ and for all $t_1, t_2 \in T_\ell(A)$ we have
\[
\langle \sigma(t_1), \sigma(t_2) \rangle_\ell = \sigma( \langle t_1, t_2 \rangle_\ell ).
\]
This means in particular that $G_{\ell^\infty}$ (respectively,  $G_\ell$) lies in the subgroup of $\Aut(T_\ell(A))$ (respectively,~of $\Aut(A[\ell])$) consisting of those automorphisms $M$ that satisfy
\[
\langle Mv, Mw \rangle_\ell = \lambda(M) \langle v,w\rangle_\ell,
\]
for some $\lambda(M) \in \Z_\ell^\times$ (respectively, $\F_\ell^\times$). The automorphisms with this property form the general symplectic group:
\begin{definition}[General symplectic group]
Let $(V,\langle \cdot, \cdot \rangle)$ be a finite-rank free module over a (commutative unital) ring $R$, endowed with a non-degenerate alternating bilinear form $\langle \cdot, \cdot \rangle$. The \emph{general symplectic group} of $(V,\langle \cdot, \cdot \rangle)$ is the subgroup of $\GL(V)$ given by
\[
\operatorname{GSp}(V ,\langle \cdot, \cdot \rangle) =\bigl\{ M \in \GL(V) \bigm\vert \exists \lambda(M) \in R^\times \colon \langle Mv, Mw \rangle = \lambda(M) \langle v,w \rangle \; \forall v,w \in V \bigr\}.
\]
The map $M \mapsto \lambda(M)$ is a homomorphism from $\operatorname{GSp}(V,\langle \cdot, \cdot \rangle)$ to $R^\times$; its kernel is by definition the \emph{symplectic group} of $(V, \langle \cdot, \cdot \rangle)$.
By abuse of notation we will usually denote $\operatorname{GSp}(V, \langle \cdot, \cdot \rangle)$ by $\operatorname{GSp}(V)$.
\end{definition}

\begin{remark}
Given our implicit choice of basis of $T_\ell(A)$ we shall usually consider $G_{\ell^\infty}$ as being a subgroup of $\GSp_{2g}(\Z_\ell)$.
\end{remark}

\begin{example}[Elliptic curves]
As with other higher-dimensional phenomena, the fact that $G_{\ell^\infty}$ is contained in $\GSp_{2g}(\Z_\ell)$ rather than just $\GL_{2g}(\Z_\ell)$ is not easy to notice when $g=1$. Indeed, it is immediate to check that for $V=\Z_\ell^2$, equipped with the standard symplectic form $\Bigl\langle \begin{pmatrix}
x_1 \\ y_1
\end{pmatrix}, \begin{pmatrix}
x_2 \\ y_2
\end{pmatrix} \Bigr\rangle = x_1y_2-y_1x_2$, one has $\GSp(V)=\GL(V)$: in the case of elliptic curves the existence of the Weil pairing does not impose any restriction on the image of Galois.
\end{example}

\begin{remark}\label{rmk:AlgCycleWeilPairing}
This restriction on the image of Galois should be thought of as being a manifestation of the existence of a polarisation, which in turn is an algebraic cycle on $A \times A^\vee$.
Indeed, let $\Gamma \subset A \times A^\vee$ be the graph of the given principal polarisation $\lambda_{\mathcal{L}}$. Since $\lambda_{\mathcal{L}}$ is a homomorphism of abelian varieties, we have $\lambda_{\mathcal{L}}(A[\ell^n]) \subseteq A^\vee[\ell^n]$, and when we consider the action of $\sigma \in \abGal{K}$ on $P \in A[\ell^n]$ we obtain
\[
\sigma( P, \lambda_{\mathcal{L}}(P) ) \in \sigma(\Gamma) = \Gamma, 
\]
so that $(\sigma(P), \sigma(\lambda_{\mathcal{L}}(P)))$ is again a point of $\Gamma$. This imposes a nontrivial constraint on the action of $\sigma$, which forces the inclusion $G_{\ell^\infty} \subseteq \GSp(T_\ell(A))$.
\end{remark}

\begin{remark}\label{rmk:WeilPairingMultiplier}
We have the useful formula
\[
\lambda(\rho_{\ell^\infty}(\sigma)) = \chi_{\ell^\infty}(\sigma),
\]
where $\lambda$ is the multiplier $\GSp(T_\ell(A)) \to \Z_\ell^\times$ and $\chi_{\ell^\infty}$ is the cyclotomic character. This is true essentially by definition: indeed we know that
\[
\langle \rho_{\ell^{\infty}}(\sigma)(P), \rho_{\ell^{\infty}}(\sigma)(Q) \rangle_{\ell} = \sigma \left( \langle P,Q \rangle_{\ell} \right),
\]
and on the other hand the Galois action on the root of unity $\langle P,Q \rangle_{\ell}$ is by definition the cyclotomic character (see Definition \ref{def:cyclotomic character}). It is also useful to notice that for a $g$-dimensional abelian variety $A$ we have an equality between the group homomorphisms $\GSp_{2g}(V) \to R^\times$ given by $\lambda^g$ and by $\det$.
\end{remark}

\begin{remark}
It is interesting to understand what happens if $A$ carries no principal polarisation. We leave this as an exercise to the reader. As a hint, choose an ample line bundle $\mathcal{L}$ defined over $K$, and put $d=\deg\lambda_{\mathcal{L}}$. If $\ell\nmid d$, the polarisation $\lambda_{\mathcal{L}}$ induces an isomorphism between $T_\ell(A)$ and $T_\ell(A^\vee)$, so the induced pairing on $T_\ell(A)$ is perfect. At primes dividing $d$, the integral pairing need not be perfect (a simple example is given by a non-principal polarisation on an elliptic curve).
\end{remark}

\subsubsection{Endomorphisms}
Reasoning as in Remark \ref{rmk:AlgCycleWeilPairing}, we see that the existence of non-trivial $K$-endomorphisms of $A$ leads to restrictions on the image of Galois: indeed, if $\varphi \colon A \to A$ is an endomorphism of $A$ defined over $K$, for any torsion point $P \in A[\ell^n]$ and for every $\sigma \in \abGal{K}$ we have
\[
\sigma( \varphi(P) ) = \varphi(\sigma(P)),
\]
an equality which should be interpreted as saying that the knowledge of the action of $\sigma$ on $P$ is enough to determine the action of $\sigma$ on $\varphi(P)$. Clearly, in general this is a non-trivial restriction on the possible automorphisms of $T_\ell(A)$ which can lie in the image of Galois.

\begin{remark}
The above discussion applies in particular to the endomorphisms $[n]$ of $A$, but all that can be deduced from the existence of these endomorphisms is that $\sigma$ acts $\Z_\ell$-linearly on $T_\ell(A)$; this fact is already subsumed by our very notation, since we consider the image of Galois as a subgroup of $\GL(T_\ell(A))$.
\end{remark}

\begin{remark}
As in Remark \ref{rmk:AlgCycleWeilPairing}, the restrictions on the image of Galois coming from the existence of nontrivial endomorphisms can be interpreted as the existence of suitable algebraic cycles, namely the graph of the endomorphism itself in $A \times A$.
\end{remark}

A special case is that of \emph{decomposable} abelian varieties: if $A$ is isomorphic to $B \times C$, then $T_\ell(A)$ decomposes as $T_\ell(B) \oplus T_\ell(C)$, and the image of Galois will -- in a suitable basis -- act through block-diagonal matrices. If $A$ is only isogenous to a product $B \times C$ the same is true after tensoring with $\Q_\ell$: more precisely, while it is not true in general that $T_\ell(A) \cong T_\ell(B) \oplus T_\ell(C)$ (as Galois representations), we always have $V_\ell(A) \cong V_\ell(B) \oplus V_\ell(C)$. This means that the action of Galois may be put in block-diagonal form after a change of basis that may involve denominators. Note that the primes $\ell$ for which the isomorphism $T_\ell(A) \cong T_\ell(B) \oplus T_\ell(C)$ may fail are only those that divide the degree of every isogeny $A\rightarrow B \times C$.

\begin{remark}
Notice that if $A$ is isogenous to a product $B \times C$ then $A$ cannot be simple, and in particular it admits nontrivial endomorphisms. Indeed we have the following maps at our disposal:
\begin{enumerate}[label={\arabic*.}, wide, labelindent=0pt]
\item an isogeny $A \to B \times C$, by assumption;
\item isogenies $A \to A^\vee$, $A^\vee \to A$, $B \to B^\vee$, 
by Remark \ref{rmk:PolarisationsExist}.
\item an isogeny $B^{\vee} \times C^{\vee} \to A^\vee$, by duality
\end{enumerate}
We then obtain a homomorphism
\[
A \to B \times C \xrightarrow{\pi_1} B \to B^\vee \xrightarrow{\iota_1} B^\vee \times C^\vee \to A^\vee \to A
\]
whose image is easily seen to have dimension equal to $\dim(B) \neq 0, \dim A$. In particular, this endomorphism cannot be of the form $[n]$ for any integer $n$.
\end{remark}

\subsubsection{0-dimensional cycles: torsion points and isogenies}
Suppose that $A(K)_{\operatorname{tors}}$ is nontrivial and let $P \in A(K)_{\operatorname{tors}}$ be a point of order $\ell^n$. Then we have $\sigma(P) = P$ for all $\sigma \in \abGal{K}$, that is, the group $G_{\ell^n}$ is contained in the group of automorphisms of $A[\ell^n]$ that fix $P$.
More generally, if there is a nontrivial isogeny $f \colon A \to A'$ of degree $N$ defined over $K$, then $\ker f$ is a Galois-stable subgroup of $A[N]$. If $\ell^n \mid N$, this implies in particular that $G_{\ell^n}$ stabilises a nontrivial $\Z/\ell^n\Z$-submodule in $A[\ell^n]$, thus imposing another restriction on $G_{\ell^n}$.

We note, however, that this restriction is less geometric in nature than the previous ones we have considered, which arise from the existence of polarisations or endomorphisms, and are therefore largely determined by the geometry of $A_{\overline{K}}$. On the other hand, torsion points defined over $K$ are in some sense an `arithmetic accident', and of course, the number of torsion points defined over $K$ can increase if we replace $K$ by a finite extension. This may lead to the expectation that -- while the fine details of the image of Galois do of course depend on the field of definition $K$ -- the image \emph{up to finite index} (which by definition is insensitive to finite extensions of the ground field) should be determined purely by the geometry of $A$. This vague expectation is made precise in the Mumford--Tate conjecture, discussed in Section \ref{sect: MT conjecture}.

\subsection{Characteristic polynomials of Frobenius}
Characteristic polynomials of Frobenius are one of the main tools in the study of the Galois representations attached to abelian varieties, both from a theoretical point of view and for practical purposes. 
Before discussing them, however, we need to recall some basic terminology from algebraic number theory.

Let $v$ be a place of $K$. Fix a place $\overline{v}$ of $\overline{K}$ extending $v$ (or, equivalently, an embedding $\overline{K} \hookrightarrow \overline{K_v}$). The choice of $\overline{v}$ induces identifications of $\abGal{K_v}$ with the \emph{decomposition group} $D_v = D(\overline{v}/v)$, which contains the canonical inertia subgroup $I_v = I(\overline{v}/v)$. Different choices of $\overline{v}$ are conjugate under the Galois action, hence in particular, all the possible decomposition and inertia groups above $v$ are conjugate under Galois.

\begin{definition}
Let $A/K$ be an abelian variety and let $v$ be a place of $K$. Let $\ell$ be a prime number. We say that the representation $\rho_{\ell^n}$ (or $\rho_{\ell^\infty}$) is \emph{unramified at $v$} if for some (hence all) choices of an inertia subgroup $I_v = I(\overline{v}/v)$ we have $\rho_{\ell^n}(I_v)=\{1\}$ (respectively, $\rho_{\ell^\infty}(I_v)=\{1\}$).
\end{definition}


Luckily, $\rho_{\ell^\infty}$ is unramified at most places of $K$ (recall from Theorem \ref{thm:FinitenessBadReduction} that every abelian variety $A/K$ has good reduction at all but finitely many places of $K$):

\begin{theorem}\label{thm:UnramifiedReps}
For every $\ell$ and $n$, the representations $\rho_{\ell^n}$ and $\rho_{\ell^\infty}$ attached to $A$ are unramified away from $\ell$ and the places of bad reduction of $A$.
\end{theorem}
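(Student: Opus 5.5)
It suffices to treat $\rho_{\ell^n}$ for every $n$. Indeed, $\rho_{\ell^\infty}(\sigma)$ is trivial if and only if $\rho_{\ell^n}(\sigma)$ is trivial for all $n$, since $T_\ell(A)=\varprojlim_n A[\ell^n]$. So fix a place $v$ of $K$ with $v\nmid \ell$ at which $A$ has good reduction. By Remark \ref{rmk: good bad reduction are local} we may work over the completion $K_v$. Choose $\overline{v}$, which identifies $I_v$ with the inertia subgroup of $\abGal{K_v}$. Let $\mathcal{A}\to\Spec\Oring_{K_v}$ be an abelian scheme with generic fibre $A_{K_v}$. Let $\Oring^{\mathrm{ur}}$ be the ring of integers of the maximal unramified extension $K_v^{\mathrm{ur}}$; its residue field is $\overline{\F}_v$. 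The goal is to show that every point of $A[\ell^n]$ is already defined over $K_v^{\mathrm{ur}}$. Since $I_v$ is exactly $\Gal(\overline{K_v}/K_v^{\mathrm{ur}})$, this gives $\rho_{\ell^n}(I_v)=\{1\}$.

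The key object is the kernel $\mathcal{A}[\ell^n]$ of $[\ell^n]$ on $\mathcal{A}$, which is a finite flat group scheme over $\Oring_{K_v}$. I would argue fibrewise. On both the generic fibre and the special fibre, $[\ell^n]$ is an isogeny of degree $\ell^{2ng}$, since both fibres are abelian varieties of dimension $g$. Because $\ell$ is invertible on $\Spec\Oring_{K_v}$, Remark \ref{rmk:EtalenessIsogenies} shows that $[\ell^n]$ is étale on both fibres. By the fibrewise flatness criterion, $[\ell^n]\colon\mathcal{A}\to\mathcal{A}$ is then flat and quasi-finite. It is also proper, because $\mathcal{A}$ is proper, hence it is finite. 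Being finite, flat and étale on fibres, it is finite étale. Therefore $\mathcal{A}[\ell^n]$ is finite étale over $\Oring_{K_v}$ of rank $\ell^{2ng}$.

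Now use the fact that a finite étale scheme over the strictly henselian ring $\Oring^{\mathrm{ur}}$ is a disjoint union of copies of $\Spec\Oring^{\mathrm{ur}}$. Concretely, $\mathcal{A}[\ell^n]_{\overline{\F}_v}$ has $\ell^{2ng}$ points by Theorem \ref{thm:StructureTorsionSubgroupsGeneral}, and each of them lifts uniquely to an $\Oring^{\mathrm{ur}}$-point by Hensel's lemma. This yields $\ell^{2ng}$ distinct points of $A[\ell^n](K_v^{\mathrm{ur}})$. Since $\# A[\ell^n](\overline{K_v})=\ell^{2ng}$, this accounts for all of $A[\ell^n]$, so $A[\ell^n]\subseteq A(K_v^{\mathrm{ur}})$ and inertia acts trivially. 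As a by-product, the reduction map $A[\ell^n]\to \mathcal{A}(\overline{\F}_v)[\ell^n]$ is an isomorphism.

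The main obstacle is the step establishing that $\mathcal{A}[\ell^n]$ is finite étale over the base. This is the place where good reduction is genuinely used: one needs the special fibre to be an abelian variety, so that the degree does not drop and no ramification appears. I would handle it with the fibrewise flatness criterion as above. A softer alternative is to take a point $P\in A[\ell^n]$ and extend it to $\mathcal{A}(\Oring_{L})$ for a finite extension $L/K_v$, using Proposition \ref{prop: XRXK}-style properness. The injectivity of reduction on $\ell^n$-torsion, which follows from the counting above, then shows that any $\sigma\in I_v$ fixes $P$. This is because $\sigma(P)$ and $P$ have the same reduction, as inertia acts trivially on the residue field.
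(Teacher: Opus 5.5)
Your proof is correct, but it takes a different route from the one the paper points to. The paper omits the proof and says only that the result is most easily obtained from the smooth proper base change theorem. In that approach one identifies $T_\ell(A)$ with the $\Z_\ell$-dual of $H^1_{\text{ét}}(A_{\overline{K_v}},\Z_\ell)$. Smooth proper base change for the abelian scheme $\mathcal{A}\to\Spec\Oring_{K_v}$ then gives a $D_v$-equivariant isomorphism with $H^1_{\text{ét}}(\mathcal{A}_{\overline{\F}_v},\Z_\ell)$, on which $D_v$ visibly acts through $D_v/I_v$.

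You instead work directly with torsion points. You show that $\mathcal{A}[\ell^n]$ is finite étale of rank $\ell^{2ng}$ over $\Oring_{K_v}$, using fibrewise flatness, properness, and étaleness of $[\ell^n]$ on both fibres. You then use that finite étale schemes over the strictly henselian ring $\Oring^{\mathrm{ur}}$ split, together with a point count, to get $A[\ell^n]\subseteq A(K_v^{\mathrm{ur}})$. This is the classical Serre--Tate argument for the easy direction of the Néron--Ogg--Shafarevich criterion.

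Your route is much more elementary. It uses good reduction exactly where it is needed: the special fibre is a $g$-dimensional abelian variety, so $[\ell^n]$ does not lose degree there. It also gives, as a by-product, the $D_v$-equivariant reduction isomorphism $A[\ell^n]\cong \widetilde{A}_v(\overline{\F}_v)[\ell^n]$. This is precisely what underlies the identity $f_{v,\ell}=p_{\widetilde{A}_v}$ that the paper asserts later. The cohomological route is heavier but more general: it applies to any smooth proper family and to cohomology in all degrees, not just to torsion of abelian schemes.

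One small remark: the ``softer alternative'' in your last paragraph does not really avoid the main step. Injectivity of reduction on $\ell^n$-torsion is itself a consequence of the étaleness of $\mathcal{A}[\ell^n]$, or else of a formal-group argument.
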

This result is most easily obtained as a consequence of the smooth proper base change theorem in étale cohomology, so we omit its proof. Among the many important consequences of Theorem \ref{thm:UnramifiedReps} is the well-posedness of the following crucial definition.

\begin{definition}
Let $v$ be a finite place of $K$ at which $A$ has good reduction. Then $D_v/I_v$ is pro-cyclic, 
generated by the \emph{(arithmetic) Frobenius at $v$}, which acts on the residue field by $x \mapsto x^{\card{\F_v}}$.
We denote by $\operatorname{Frob}_v$ any lift of this element to $\abGal{K}$: notice that $\operatorname{Frob}_v$ is well-defined only up to composition by elements of $I_v$ and the conjugation action of $\abGal{K}$.

Let now $\ell$ be a prime different from the residue characteristic of $v$. By Theorem \ref{thm:UnramifiedReps} we have that $\rho_{\ell^\infty}(I_v)=\{1\}$, which implies that $\rho_{\ell^\infty}(\operatorname{Frob}_v)$ is well-defined up to conjugation. Finally, since the characteristic polynomial of an endomorphism depends only on its conjugacy class, we may define
\[
f_{v,\ell}(t)  \coloneq  \operatorname{det} \left(t \operatorname{Id} - \rho_{\ell^\infty}(\operatorname{Frob}_v) \right),
\]
which is a well-defined polynomial in $\Z_\ell[t]$, independent of all the choices we made in defining $\operatorname{Frob}_v$.
\end{definition}

There is an obvious analogue of this definition in the case of finite fields:
\begin{definition}\label{def:FrobCharPolyFF}
Let $A/K$ be an abelian variety over a finite field of characteristic $p$. Choose a prime $\ell \neq p$: the \emph{characteristic polynomial of Frobenius} is defined as
\[
p_A(t) = \det(t\operatorname{Id} - \rho_{\ell^\infty}(\operatorname{Frob})) \in \Z_\ell[t].
\]
\end{definition}

The polynomial $f_{v,\ell}$ is attached to an abelian variety over a number field and a place $v$, whereas $p_A$ is attached to an abelian variety already defined over a finite field. If $\widetilde{A}_v/\F_v$ is the reduction of $A$ at a place of good reduction of characteristic different from $\ell$, then $f_{v,\ell}(t)=p_{\widetilde{A}_v}(t)$, by the compatibility of the Tate module with good reduction.

\subsubsection{Compatibility and the Weil conjectures}
The usefulness of the characteristic polynomials of Frobenius in the study of abelian varieties lies in the following deep result, which combines input by many people, but which was ultimately proved by Deligne \cite{MR340258}:
\begin{theorem}[Compatibility and the Weil conjectures]\label{thm:Weil}
The following hold:
\begin{enumerate}[label={\arabic*.}, wide, labelindent=0pt]
\item For each finite place $v$ of good reduction, the polynomial $f_{v,\ell}(t)$ has integral coefficients and is independent of the choice of the auxiliary prime $\ell\neq \operatorname{char}(\F_v)$.
\item The roots of $f_{v,\ell}(t) \in \Z[t] \subset \C[t]$ come in $g$ conjugate pairs $\tau_1, \overline{\tau_1}, \dotsc, \tau_g, \overline{\tau_g}$. Every $\tau_i$ is a \emph{Weil number}: its absolute value is $(\card{\F_v})^{1/2}$ under any embedding of $\overline{\Q}$ in $\C$.
\end{enumerate}
\end{theorem}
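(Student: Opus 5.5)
The plan is to reduce everything to the abelian variety $\widetilde{A}_v$ over the finite field $\F_v$ (with $q=\card{\F_v}$), using $f_{v,\ell}=p_{\widetilde{A}_v}$. There the geometric Frobenius endomorphism $\pi\in\End(\widetilde{A}_v)$, given by raising coordinates to the $q$-th power, acts on $T_\ell(\widetilde{A}_v)$ in the same way as $\rho_{\ell^\infty}(\operatorname{Frob})$. The reason is that the points of $\widetilde{A}_v(\overline{\F}_v)$ are acted on identically by the arithmetic Frobenius and by $\pi$. So $f_{v,\ell}$ is the characteristic polynomial of the $\Z_\ell$-linear map $T_\ell(\pi)$.

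\textbf{Step 1: integrality and independence of $\ell$.} I would show that for every $n\in\Z$,
\[
\det\bigl(n-T_\ell(\pi)\bigr)=\deg(n-\pi),
\]
where the degree is taken as an isogeny (or is $0$). Granting this, the polynomial $t\mapsto\det(t-T_\ell(\pi))$ agrees with a function independent of $\ell$ at every integer $t=n$. Since a monic polynomial of degree $2g$ is determined by its values at infinitely many integers, $f_{v,\ell}$ is independent of $\ell$. Its coefficients are then $\Q$-linear combinations of the integers $\deg(n-\pi)$ via Lagrange interpolation, so they lie in $\Q\cap\Z_\ell=\Z_{(\ell)}$ for every $\ell\ne p$. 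The polynomial is also monic of degree $2g$.

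To get $p$-integrality, I would note that the roots are eigenvalues of an endomorphism of the finite free $\Z_\ell$-module $T_\ell$, hence algebraic integers. Equivalently, $f$ is the characteristic polynomial of $\pi$ acting on a lattice, so it lies in $\Z[t]$. The core identity $\det T_\ell(\varphi)=\deg\varphi$ for $\varphi\in\End(\widetilde{A}_v)$ I would prove first for separable isogenies, where $\card\ker\varphi[\ell^\infty]$ equals the $\ell$-part of $\card\ker\varphi$, which gives the equality up to an $\ell$-adic unit. I would then upgrade it to an exact equality by the standard argument that $\varphi\mapsto\deg\varphi$ is a homogeneous polynomial function of degree $2g$ on $\End\otimes\Q$, as in the computation $\deg[n]=n^{2g}$ proved earlier. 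Two polynomial functions that agree up to $\ell$-adic units for all $\ell$ must agree.

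\textbf{Step 2: the Riemann hypothesis.} Choose a polarisation $\lambda$ of $\widetilde{A}_v$ defined over $\F_v$, which exists by Remark \ref{rmk:PolarisationsExist}. Consider the Rosati involution of Definition \ref{def:Rosati-involution}. I would show that $\pi^\dagger\pi=[q]$. Using Corollary \ref{cor:RosatiInvolution}, this is equivalent to $\langle\pi x,\pi y\rangle_{\ell,\mathcal L}=q\langle x,y\rangle_{\ell,\mathcal L}$. That identity follows from Remark \ref{rmk:WeilPairingMultiplier}, since the cyclotomic character of $\operatorname{Frob}$ is $q$.

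Next, the $\Q$-algebra $\Q[\pi]\subset\End^0$ is commutative and stable under $\dagger$, because $\pi^\dagger=q\pi^{-1}$. By positivity of the Rosati involution, namely that $\operatorname{Tr}(\varphi\varphi^\dagger)>0$ for $\varphi\ne0$, this algebra is a product of CM or totally real fields on which $\dagger$ acts as complex conjugation under every embedding. Hence for every root $\tau$ and every embedding we get $\tau\overline\tau=q$, that is, $|\tau|=q^{1/2}$. The pairing of the roots into complex-conjugate pairs holds because $f$ is real. Real roots $\pm\sqrt q$ occur with even multiplicity, which follows from $\det=q^g>0$ together with the functional equation $t^{2g}f(q/t)=q^gf(t)$.

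\textbf{Main obstacle.} The hardest steps are the positivity of the Rosati involution and the exact equality $\det T_\ell(\varphi)=\deg\varphi$. The latter covers the inseparable case, which matters because $\pi$ itself is purely inseparable. For that case I would first try factoring $\varphi$ into a separable part and a power of Frobenius, and then compute $\deg\pi=q^g$ directly.
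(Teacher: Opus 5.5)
\textbf{Verdict.} The paper gives no argument for this statement; it simply cites Deligne. Your plan is instead Weil's original proof for abelian varieties. Step~2 is sound once positivity of the Rosati involution is granted. Two steps of Step~1, however, fail as written.

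\textbf{$p$-integrality.} Eigenvalues of a $\Z_\ell$-linear endomorphism of $T_\ell$ are only integral over $\Z_\ell$. So this argument only reproves integrality at the primes above $\ell\neq p$ and says nothing at $p$. The only lattices available to you are the $\Z_\ell$-lattices $T_\ell$; you have not exhibited any $\Z$-lattice on which $\pi$ acts with characteristic polynomial $f$. The rest of Step~1 cannot supply the missing information either: $t^{2p}+(t^p-t)/p$ is monic, integer-valued and $\ell$-integral for every $\ell\neq p$, yet it is not in $\Z[t]$. What you need is that $\End(\widetilde{A}_v)$ is a finitely generated $\Z$-module. Then $\pi$ is a root of some monic $m\in\Z[t]$, so $m(T_\ell\pi)=0$. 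Every root of $f$ is therefore a root of $m$, hence an algebraic integer, and so $f\in\Z[t]$.

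\textbf{The upgrade lemma.} You want to upgrade $|\det T_\ell\varphi|_\ell=|\deg\varphi|_\ell$ to an exact equality, but the lemma you invoke is false.
\begin{itemize}
\item The function $\varphi\mapsto\det T_\ell\varphi$ is a polynomial with $\Q_\ell$-coefficients that depend on $\ell$. You therefore cannot argue ``for all $\ell$'' simultaneously without assuming what you want to prove.
\item For a single $\ell$, equality of absolute values does not force equality, even for homogeneous forms normalised at $1$. Take a unit $u\neq 1$ of $\Z_\ell$. In each of $x^2-\ell y^2$ and $x^2-u\ell y^2$ the two monomials have valuations of different parity. Hence the two forms have the same $\ell$-adic absolute value at every point of $\Q_\ell^2$, and both equal $1$ at $(1,0)$, yet they differ.
\end{itemize}
The ingredient that makes Mumford's proof \cite{Mumford70abelianvarieties} work is multiplicativity. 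Fix $\varphi$, and let $b_i$ and $a_i$ be the roots of $\deg(t-\varphi)$ and of $\det(t-T_\ell\varphi)$ respectively. For every $h\in\Q[t]$ one has $\deg h(\varphi)=\prod_i h(b_i)$ and $\det T_\ell h(\varphi)=\prod_i h(a_i)$. Now apply the $\ell$-adic equality to polynomials $h$ approximating $\mu+\epsilon$, with $\mu$ irreducible over $\Q_\ell$ and $\epsilon\to 0$. This matches the two root multisets with multiplicity.

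\textbf{Inseparability.} This is not the obstacle you think it is. For every isogeny $\varphi$ we have $(\ker\varphi)(\overline{\F}_v)[\ell^\infty]\cong T_\ell/\varphi T_\ell$, and inseparable degrees are powers of $p$. So the $\ell$-adic comparison already holds for all isogenies, separable or not. The factorisation you propose would fail anyway for $g\ge 2$. For elliptic curves $E,E'$ over $\F_p$, the endomorphism $F_E\times\operatorname{id}$ of $E\times E'$ has inseparable degree $p$. By contrast, any composite of separable isogenies with the $n$-th power of Frobenius of an abelian surface has inseparable degree $p^{2n}$.
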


In the light of this theorem it makes sense to introduce the following definition:
\begin{definition}
With the above notation, we set
\[
f_v(t)  \coloneq  f_{v,\ell}(t)
\]
for any prime $\ell$ different from the residue characteristic of $v$.
\end{definition}

We check that this is compatible with what we discussed in Remark \ref{rmk:WeilPairingMultiplier}:
\begin{remark}
Let $\sigma  \coloneq  \rho_{\ell^\infty}(\operatorname{Frob}_v)$ for an arbitrary choice of $\operatorname{Frob}_v$.
The constant term of $f_v(t)$ is given by $\det(\sigma)$, which according to Remark \ref{rmk:WeilPairingMultiplier} is equal to $\lambda(\sigma)^g =  \chi_{\ell^\infty}(\operatorname{Frob}_v)^g= (\card{\F_v})^g$. On the other hand, by Theorem \ref{thm:Weil} we know that the constant term of $f_v(t)$ is equal to the product 
\[
\prod_{i=1}^g \tau_i \overline{\tau_i}=\prod_{i=1}^g |\tau_i|^2 = \prod_{i=1}^g \card{\F_v} = (\card{\F_v})^g.
\]
\end{remark}

\subsubsection{Characteristic polynomials of Frobenius for Jacobians}

We now explain how to determine $f_v(t)$ when $A$ is the Jacobian of a nice curve. The answer is again provided by work of Weil and Deligne; we summarise the relevant results in the following theorem.
\begin{theorem}[Zeta functions of curves over finite fields]
\label{thm:ZetaFunctionsFiniteFields}
Let $C/\F_q$ be a smooth projective curve over the finite field with $q=p^n$ elements. Define the local zeta function $Z(C,s)$ by the formula
\begin{equation}\label{eq:Z-function-curve}
    Z(C,s) = \exp\Bigl( \sum_{m=1}^\infty \frac{\card{C(\F_{q^m})}}{m} q^{-ms} \Bigr).
\end{equation}
Then the following hold:
\begin{enumerate}[label={\arabic*.}, wide, labelindent=0pt]
\item $Z(C,s)$ is a rational function of $t=q^{-s}$.
\item We can write
\[
Z(C,s) = \frac{P_C(t)}{(1-t)(1-qt)}
\]
for a certain polynomial $P_C(t)$ of degree $2g$ with integer coefficients.
\item The roots of $P_C(t) \in \Z[t] \subset \C[t]$ come in $g$ conjugate pairs $\tau_1, \overline{\tau_1}, \dotsc, \tau_g, \overline{\tau_g}$. Every $\tau_i$ is a Weil number of weight $-1$, that is, under every embedding in $\C$ it has absolute value $q^{-1/2}$, and $\tau_i \overline{\tau_i}=q^{-1}$.
\end{enumerate}
\end{theorem}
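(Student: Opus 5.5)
The plan is to deduce the statement from the results already available for Jacobians: the Frobenius characteristic polynomial on the Tate module, together with Theorem \ref{thm:Weil}. Let $J=\Jac(C)$, which has dimension $g$, and let $F$ be the $q$-power Frobenius endomorphism of $C$ and of $J$. The first step is the Lefschetz-type counting formula
\[
\card{C(\F_{q^m})} = q^m + 1 - \operatorname{tr}\bigl(\rho_{\ell^\infty}(\operatorname{Frob})^m \mid V_\ell(J)\bigr).
\]
To obtain it, I would note that $C(\F_{q^m})$ is the fixed-point set of $F^m$ on $C(\overline{\F}_q)$, i.e.\ the intersection of the graph of $F^m$ with the diagonal. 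This intersection is transversal, because $dF=0$. Its intersection number can then be computed through the correspondence induced on $J$ by pullback and pushforward along the Abel--Jacobi embedding.

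An equivalent route is to use $\deg(1-F^m)=\card{\ker(1-F^m)}$. Since $1-F^m$ is separable, this gives $\card{J(\F_{q^m})}$, and for any endomorphism $\alpha$ one has $\deg\alpha=\det(\alpha\mid V_\ell J)$. However, counting points of $C$ rather than of $J$ needs the trace identity above, which comes from the Weil positivity / intersection argument (Castelnuovo--Severi, or the Rosati positivity $\operatorname{tr}(\varphi\varphi^\dagger)>0$). This is the step I expect to be the main obstacle. My first attempt would be to cite the standard identification $H^1_{\text{ét}}(C,\Q_\ell)\cong V_\ell(J)^\vee$, and then apply the Grothendieck--Lefschetz trace formula on $H^0$, $H^1$, $H^2$, whose Frobenius eigenvalues are $1$, the $\alpha_i$, and $q$ respectively.

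Granting the formula, write $\alpha_1,\dots,\alpha_{2g}$ for the roots of $f(t)=\det(t-\rho_{\ell^\infty}(\operatorname{Frob}))$. Taking $\exp\bigl(\sum_m \frac{\cdot}{m} t^m\bigr)$ of each term and using $\sum_m x^m t^m/m=-\log(1-xt)$, I get
\[
Z(C,s)=\frac{\prod_{i}(1-\alpha_i t)}{(1-t)(1-qt)}, \qquad t=q^{-s}.
\]
This proves rationality (part 1) and part 2, with $P_C(t)=t^{2g}f(1/t)$, the reversed polynomial. By Theorem \ref{thm:Weil}(1), applied over a finite field in the form of Definition \ref{def:FrobCharPolyFF}, this polynomial has degree $2g$ and integer coefficients.

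For part 3, the roots of $P_C$ are the $\alpha_i^{-1}$. By Theorem \ref{thm:Weil}(2), the $\alpha_i$ come in complex-conjugate pairs with $|\alpha_i|=q^{1/2}$ under every embedding. Hence each $\alpha_i\overline{\alpha_i}=q$, and the inverses $\tau_i=\alpha_i^{-1}$ satisfy $|\tau_i|=q^{-1/2}$ and $\tau_i\overline{\tau_i}=q^{-1}$. Complex conjugation preserves the multiset of roots because $f$ has real coefficients, and the pairing into $g$ pairs is inherited from that of the $\alpha_i$.
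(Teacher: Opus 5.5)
The paper gives no proof of this theorem: it is quoted as a summary of work of Weil and Deligne, and the companion statement $f_v(t)=t^{2g}P_{C_{\F_v}}(1/t)$ is likewise only cited. Your reduction to the Jacobian is correct, and it is essentially Weil's classical route.

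Once $\card{C(\F_{q^m})}=q^m+1-\sum_i\alpha_i^m$ is available (via the Grothendieck--Lefschetz trace formula and $H^1_{\text{ét}}(C_{\overline{\F}_q},\Q_\ell)\cong V_\ell(J)^\vee$), the rest follows:
\begin{itemize}
\item parts 1 and 2 are formal;
\item $\deg P_C=2g$, because its leading coefficient is $f(0)=q^g\neq 0$;
\item part 3 is inherited from the Riemann hypothesis for $J$.
\end{itemize}
Compared with the bare citation, your argument proves the paper's next theorem along the way. It also shows exactly where the depth lies: the trace formula gives rationality, and positivity of the Rosati involution gives the Riemann hypothesis.

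Two corrections to the write-up.
\begin{itemize}
\item Positivity (Castelnuovo--Severi, or $\operatorname{tr}(\varphi\varphi^\dagger)>0$) plays no role in the trace identity. In the intersection-theoretic version, $(\Gamma_{F^m}\cdot\Delta)=1+q^m-\operatorname{tr}(F^m\mid V_\ell J)$ is pure intersection theory. Positivity enters only in $|\alpha_i|=q^{1/2}$, which you import anyway.
\item Theorem \ref{thm:Weil} as stated concerns reductions of abelian varieties over number fields. To apply it literally to $J/\F_q$, you would have to lift $C$ to a number field with a good place of residue field exactly $\F_q$. This is possible, since curves are unobstructed, but it needs an argument. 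It is cleaner to cite Weil's theorem for abelian varieties over finite fields directly, which is what Theorem \ref{thm:Weil} rests on.
\end{itemize}
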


The usefulness of this theorem for the computation of $f_v(t)$ comes from the following additional result. In the statement, $f_v(t)$ denotes the characteristic polynomial of Frobenius attached to $\Jac(C)$ at $v$, while $P_{C_{\mathbb{F}_v}}(t)$ is defined by Theorem \ref{thm:ZetaFunctionsFiniteFields}.
\begin{theorem}
Let $C/K$ be a nice curve having good reduction at $v$. Then $f_v(t)$ agrees with $t^{2g}P_{C_{\F_v}}(1/t)$, where $C_{\F_v}$ is the curve over $\F_v$ deduced from (a smooth $\Oring_{K_v}$-model of) $C$ by reduction modulo $v$.
\end{theorem}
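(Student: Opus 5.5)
We must show that $f_v(t)=t^{2g}P_{C_{\F_v}}(1/t)$. The plan is to move the problem entirely to the finite field $\F_v$. Then we compare two expressions for the point counts $\card{C_{\F_v}}(\F_{q^m})$, where $q=\card{\F_v}$: one comes from the zeta function, the other from the trace of Frobenius on the Tate module of the Jacobian.

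First, the reduction step. By the theorem on good reduction of Jacobians, $J=\Jac(C)$ has good reduction at $v$. One also needs the (standard) fact that the reduction $\widetilde{J}_v$ is the Jacobian of $C_{\F_v}$: the relative Picard scheme $\Pic^0$ of the smooth proper model $\mathcal{C}\to\Spec\Oring_{K_v}$ is an abelian scheme with generic fibre $J$, and by uniqueness of abelian scheme extensions (Néron property) it is the model of $J$. By the remark following Definition \ref{def:FrobCharPolyFF}, $f_v(t)=p_{\widetilde{J}_v}(t)$. It therefore suffices to prove that $p_{J_0}(t)=t^{2g}P_{C_0}(1/t)$ for a nice curve $C_0/\F_q$ with Jacobian $J_0$.

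Second, the trace formula. Let $F$ be the $q$-power Frobenius endomorphism of $C_0$ and of $J_0$. On $T_\ell(J_0)$ it acts as $\rho_{\ell^\infty}(\operatorname{Frob})$, with eigenvalues $\alpha_1,\dots,\alpha_{2g}$, the roots of $p_{J_0}$. The key claim is the Lefschetz-type formula
\[
\card{C_0(\F_{q^m})} = 1 + q^m - \sum_{i=1}^{2g}\alpha_i^m .
\]
To prove it, I would note that $\card{C_0(\F_{q^m})}$ is the number of fixed points of $F^m$ on $C_0(\overline{\F}_q)$. Since $1-F^m$ is separable (its differential is the identity), this equals the degree of the graph intersection $\Gamma_{F^m}\cdot\Delta$. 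Via the universal property of the Jacobian and Abel--Jacobi, this intersection number equals $1+q^m-\operatorname{tr}(F^m\mid T_\ell J_0)$. An alternative route: $\deg(1-F^m)$ on $J_0$ equals $\det(1-F^m\mid T_\ell J_0)=\prod_i(1-\alpha_i^m)$, and one relates this to point counts by comparing $\ker(1-F^m)=J_0(\F_{q^m})$. I expect this step, matching the trace on $T_\ell$ with the fixed-point count on the curve rather than on the Jacobian, to be the main obstacle. I would rely on Weil's classical argument, using the Rosati involution of the canonical principal polarisation, for which $F^\dagger F=q$.

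Third, the formal computation. Plug the trace formula into \eqref{eq:Z-function-curve} with $t=q^{-s}$, using $\sum_m x^m t^m/m=-\log(1-xt)$:
\[
Z(C_0,s)=\frac{\prod_{i=1}^{2g}(1-\alpha_i t)}{(1-t)(1-qt)}.
\]
Comparing with Theorem \ref{thm:ZetaFunctionsFiniteFields}(2) gives $P_{C_0}(t)=\prod_i(1-\alpha_i t)$. Hence
\[
t^{2g}P_{C_0}(1/t)=\prod_i(t-\alpha_i)=p_{J_0}(t),
\]
which is the claim. As a consistency check, the roots of $P_{C_0}$ are $\alpha_i^{-1}$, and these have absolute value $q^{-1/2}$ by Theorem \ref{thm:Weil}, in agreement with part 3 of Theorem \ref{thm:ZetaFunctionsFiniteFields}.
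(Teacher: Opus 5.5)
The paper does not actually prove this statement. It quotes it as a result of Weil and Deligne, and the only nearby argument is the étale Lefschetz computation of $\card{\Jac(C)(\F_q)}$. So there is no proof to match, and judged on its own your outline is correct. The following steps are all fine:
\begin{itemize}
\item the reduction to $\F_v$ via $\Pic^0$ of the smooth model, which is an abelian scheme and hence the Néron model of $J$;
\item the identification of arithmetic Frobenius on $T_\ell(J_0)$ with the Frobenius endomorphism $F$;
\item the formal zeta-function computation.
\end{itemize}
You also correctly isolate the only substantive input. Given the formal step, the theorem is \emph{equivalent} to
\[
\card{C_0(\F_{q^m})}=1+q^m-\operatorname{tr}(F^m\mid V_\ell J_0)\qquad\text{for all } m\ge 1.
\]

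There are two standard ways to prove this formula.

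\textbf{The cohomological route.} This is in the spirit of the paper's remark. Apply the Lefschetz trace formula to $C_0$ itself, using $H^0=\Q_\ell$, $H^2=\Q_\ell(-1)$, and $H^1(C_{0,\overline{\F}_q},\Q_\ell)\cong V_\ell(J_0)^\vee$ from the Kummer sequence. This is one line, but it needs étale cohomology.

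\textbf{Your route (Weil's).} It needs only intersection theory, and the Rosati identity you mention is exactly the right tool. Put $\alpha=1-F^m$ and take all traces on $V_\ell J_0$.
\begin{itemize}
\item The positivity formula $\operatorname{tr}(\alpha^\dagger\alpha)=\frac{2g}{(\Theta^g)}(\Theta^{g-1}\cdot\alpha^*\Theta)$, combined with Poincaré's formula $\Theta^{g-1}\equiv(g-1)!\,[C_0]$, gives $\operatorname{tr}(\alpha^\dagger\alpha)=2\,(C_0\cdot\alpha^*\Theta)$.
\item Using $F^\dagger F=q$ and $\operatorname{tr}F^\dagger=\operatorname{tr}F$, the left-hand side equals $2g-2\operatorname{tr}(F^m)+2gq^m$.
\item For the difference map $\delta(x,y)=f(x)-f(y)$ one has, up to algebraic equivalence, $\delta^*\Theta\equiv\Delta+(g-1)(\{P\}\times C_0+C_0\times\{P\})$.
\item Pull this back along $x\mapsto(x,F^mx)$. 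The term $\Delta$ contributes $\card{C_0(\F_{q^m})}$, by transversality. The fibres $\{P\}\times C_0$ and $C_0\times\{P\}$ contribute $1$ and $q^m$. Since $\delta(x,F^mx)=\alpha(f(x))+\text{const}$, this gives $(C_0\cdot\alpha^*\Theta)=\card{C_0(\F_{q^m})}+(g-1)(1+q^m)$.
\end{itemize}
Comparing the two expressions yields the formula. The same setup also gives the Riemann hypothesis, so your route is more elementary and more informative.

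Two small corrections. First, on the curve the phrase ``$1-F^m$ is separable'' should read ``$\Gamma_{F^m}$ meets $\Delta$ transversally because $d(F^m)=0$''. Second, your alternative route through $\deg(1-F^m)$ only produces $\card{J_0(\F_{q^m})}=\prod_i(1-\alpha_i^m)$. That is a determinant, not a trace, so it cannot replace the key step without further input.
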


\begin{remark}\label{rmk:StructurePC}
By expanding formally the definition of $Z(C,s)$ and matching coefficients between this representation and the representation $Z(C,s) = \frac{P_C(t)}{(1-t)(1-qt)}$, one sees that the $g=g(C)$ numbers $\card{C(\F_q)}, \dotsc, \card{C(\F_{q^g})}$ are sufficient to uniquely determine the polynomial $P_C(t)$. Moreover, the symmetry condition (if $\tau$ is a root, then so is $\frac{1}{q\tau}$) ensures that the coefficients of $P_C(t)=\sum_{i=0}^{2g} a_i t^i$ satisfy $a_{g+i}=q^{i}a_{g-i}$.
\end{remark}

\begin{example}
We show how the method of Remark \ref{rmk:StructurePC} works in practice by expressing the polynomial $P_C(t)$ for a curve $C$ of genus $2$ over $\F_p$ in terms of $\card{C(\F_p)}$ and $\card{C(\F_{p^2})}$. We write $N_m$ for $\card{C(\F_{p^m})}$ and $P_C(t)=p^2t^4- pa_1t^3+ a_2 t^2 - a_1 t+1$: the symmetry of the polynomial follows from Remark \ref{rmk:StructurePC}, and the minus sign in front of the coefficient $a_1$ is for consistency with some familiar formulas in the case of elliptic curves, as we will see below. Expanding the ratio
$
\displaystyle \frac{P_C(t)}{(1-t)(1-pt)}
$
formally as a power series in $t$ we obtain
\[
Z(C,s) = 1+t (-a_1+p+1)+t^2 \big(-a_1 p-a_1+a_2+p^2+p+1\big)+O(t^3),
\]
while from the defining formula for $Z(C,s)$, namely Equation \eqref{eq:Z-function-curve}, we get
\[
\begin{aligned}
Z(C,s) & = \exp\Big( N_1 t + \frac{N_2}{2} t^2 + O(t^3) \Big) \\
& = 1 + N_1 t + \frac{1}{2}N_2 t^2 + \frac{1}{2} N_1^2 t^2 + O(t^3)
\end{aligned}
\]
from which we get 
\[
a_1 = p+1-N_1, \quad a_2=\frac{1}{2}(N_2 + N_1^2)-(p+1)N_1+p.
\]
While the formula for $a_2$ might not be very enlightening, the formula for $a_1$ should be familiar: it's precisely the definition of the so-called \emph{trace of Frobenius} (very often denoted by $a_p$) for elliptic curves.

As an example, consider the curve $C/\F_7 \colon y^2=x^5+x+2$. One may compute directly, for example using a computer algebra system, that $N_1=10$ and $N_2=46$. The polynomial $P_C(t)$ is therefore given by $49t^4+14t^3+2t+1$.
\end{example}

More generally, we have:
\begin{proposition}
Write $P_C(t)=\prod_{i=1}^{2g} (\alpha_i t-1)$ with $\alpha_i \in \C$. For every $m \geq 1$ we have
\[
\card{C(\F_{q^m})} = q^m+1 - \sum_{i=1}^{2g} \alpha_i^m.
\]
\end{proposition}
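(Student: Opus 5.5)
We take logarithms of the zeta function and compare the resulting power series coefficient by coefficient, using only Theorem \ref{thm:ZetaFunctionsFiniteFields}. Write $t = q^{-s}$. By definition \eqref{eq:Z-function-curve},
\[
\log Z(C,s) = \sum_{m \geq 1} \frac{\card{C(\F_{q^m})}}{m}\, t^m
\]
as a formal power series in $t$ with zero constant term. By part 2 of Theorem \ref{thm:ZetaFunctionsFiniteFields}, $Z(C,s) = P_C(t)/\bigl((1-t)(1-qt)\bigr)$.

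The first step is to fix the normalisation of the factorisation. The constant term of $P_C(t)$ is $1$: setting $t=0$ in the formal expansion of $Z$ gives $Z=1$. Hence we may write $P_C(t) = \prod_{i=1}^{2g} (1-\alpha_i t)$. The statement writes $\prod (\alpha_i t - 1)$; since there are $2g$ factors, this differs from $\prod(1-\alpha_i t)$ by $(-1)^{2g}=1$, so the two expressions agree. The $\alpha_i$ are then the reciprocals of the roots of $P_C$. Only the identity of polynomials matters here, not the location of the roots given by part 3.

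The second step is to take formal logarithms, using $\log(1-\beta t) = -\sum_{m\ge1} \beta^m t^m/m$. This gives
\[
\log Z(C,s) = \sum_{i=1}^{2g}\log(1-\alpha_i t) - \log(1-t) - \log(1-qt) = \sum_{m \geq 1} \frac{1}{m}\Bigl(1 + q^m - \sum_{i=1}^{2g}\alpha_i^m\Bigr) t^m .
\]
Comparing the coefficients of $t^m$ with the series from \eqref{eq:Z-function-curve} and multiplying by $m$ gives the claimed formula.

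The only point requiring care is justifying that the logarithm identity holds as formal power series over $\C$. This follows because $\log$ and $\exp$ are mutually inverse bijections between power series with constant term $0$ and power series with constant term $1$, and $\log$ turns products into sums. Equivalently, one can avoid $\log$ altogether: compare $t\,\frac{d}{dt}\log$ of both sides, that is, logarithmic derivatives, which reduces everything to $\frac{-\beta t}{1-\beta t} = -\sum_{m\ge1} \beta^m t^m$. I do not expect a genuine obstacle; the sign and normalisation check in the first step is the most delicate part.
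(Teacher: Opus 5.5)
Your argument is correct. The normalisation $\prod_{i=1}^{2g}(\alpha_i t-1)=\prod_{i=1}^{2g}(1-\alpha_i t)$, followed by comparing $\log Z$ (or $t\frac{d}{dt}\log Z$) with $\sum_{m\ge 1} \card{C(\F_{q^m})}\,t^m/m$ coefficient by coefficient, gives exactly the claimed formula. The paper states this proposition without a proof, but your route is the formal coefficient-matching it uses in Remark \ref{rmk:StructurePC} and the genus-$2$ example; taking the logarithm just linearises that comparison, so every $m$ is handled at once instead of expanding $\exp$ to a fixed order.
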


\begin{remark}[Number of $\F_q$-rational points of the Jacobian]
If we are interested in the arithmetic of the Jacobian itself, rather than the arithmetic of $C$, another formula which is often useful is the equality
\[\card{\Jac(C)(\F_q)}=P_C(1).\]
A quick proof of this equality goes through the Lefschetz trace formula. Write $J \coloneq  \operatorname{Jac}(C)$ and note first that $J(\F_q)$ is the subgroup of points of $J(\overline{\F_q})$ fixed by Frobenius. On the other hand, by the Lefschetz formula,
the number of fixed points of (geometric) Frobenius acting on $J(\overline{\F_q})$ is equal to the alternating sum of traces of $\operatorname{Frob}$ on $H^i_{\text{ét}}(J_{\overline{\F_q}},\Q_\ell)$, that is,
\[
\begin{aligned}
\card{J(\F_q)} & = \card{J(\overline{\F_q})^{\operatorname{Frob}}} = \sum_{i=0}^{2g} (-1)^i \operatorname{tr}\left( \operatorname{Frob} \bigm\vert H^i_{\text{ét}}(J_{\overline{\F_q}},\Q_\ell) \right)
\\ & = \sum_{i=0}^{2g} (-1)^i \operatorname{tr}\left( \operatorname{Frob} \bigm\vert \Lambda^i H^1_{\text{ét}}(J_{\overline{\F_q}},\Q_\ell) \right)
\\ & = \sum_{i=0}^{2g} (-1)^i \sum_{ \substack{H \subseteq \{1,\dotsc,{2g}\} \\ |H|=i}} \prod_{h \in H} \alpha_h = P_C(1).
\end{aligned}
\]
Note that, over $\C$, a $g$-dimensional abelian variety is topologically the real torus $(S^1)^{2g}$, whose cohomology algebra is the exterior algebra on its first cohomology group. The isomorphism $H^i_{\text{ét}}(J_{\overline{\F_q}},\Q_\ell)\cong\Lambda^i H^1_{\text{ét}}(J_{\overline{\F_q}},\Q_\ell)$ used above is the étale version of this fact; see \cite[Theorem 15.1]{MilneAV}.

\end{remark}

\section{Tate's conjecture on endomorphisms}
A powerful tool to relate an abelian variety $A$ to its Galois representations is given by Tate's conjecture on homomorphisms, now a theorem of Faltings \cite{FaltingsFiniteness} for number fields; Tate himself proved in \cite{MR206004} the version for finite fields.

\begin{theorem}[Faltings, Tate]\label{thm: Tate hom conjecture}
Let $A, B$ be abelian varieties defined over a common field $K$, where $K$ is either a number field or a finite field, and let $\ell \neq \operatorname{char}(K)$ be a prime. There is a natural isomorphism
\[
\Hom_K(A,B) \otimes_{\Z} \Q_\ell \cong \Hom_{\abGal{K}}(V_\ell(A), V_\ell(B)),
\]
where $\Hom_{\abGal{K}}(V_\ell(A), V_\ell(B))$ denotes the group of $\Q_\ell$-linear homomorphisms \[
g \colon V_\ell(A) \to V_\ell(B)
\]
that satisfy
\[
\sigma(g(v))=g(\sigma(v))\quad \forall v \in V_\ell(A), \forall \sigma \in \abGal{K}.
\]
\end{theorem}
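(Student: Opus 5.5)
The plan follows Tate's strategy; Faltings' proof over number fields uses the same skeleton, with different input at the hard step.

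\emph{Step 1: the map.} First I will construct the natural map and show it is injective. A homomorphism $f\colon A\to B$ defined over $K$ sends $A[\ell^n]$ to $B[\ell^n]$ compatibly with multiplication by $\ell$. It therefore induces $T_\ell(f)\colon T_\ell(A)\to T_\ell(B)$, which commutes with the Galois action because $f$ is defined over $K$. Extending scalars gives
\[
\Hom_K(A,B)\otimes_\Z\Q_\ell\to\Hom_{\abGal{K}}(V_\ell(A),V_\ell(B)).
\]
For injectivity I will use the classical argument, valid over any field. $\Hom(A,B)$ is a finitely generated free $\Z$-module, and $\Hom(A,B)\otimes\Z_\ell\to\Hom(T_\ell A,T_\ell B)$ is injective. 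To see this, one shows that if $f$ kills $A[\ell^n]$ then $f$ factors through $[\ell^n]$, as in Remark \ref{rmk:IsogenySymmetric}. Together with Poincaré reducibility (Theorem \ref{thm:Poincare}) and a degree-polynomial argument, this shows that the $\ell$-adic closure of $\Hom(A,B)$ meets $\Hom(A,B)\otimes\Q$ in $\Hom(A,B)$.

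\emph{Step 2: reduction to endomorphisms.} Replacing $A$ by $A\times B$, it suffices to prove surjectivity of $\End_K(A)\otimes\Q_\ell\to\End_{\abGal{K}}(V_\ell A)$. The map for $\Hom_K(A,B)$ is then recovered as an off-diagonal block of the map for $\End_K(A\times B)$.

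\emph{Step 3: the endomorphism case.} The key lemma I will aim for is the following: for every $\abGal{K}$-stable subspace $W\subseteq V_\ell(A)$, there is $u\in\End_K(A)\otimes\Q_\ell$ with $u(V_\ell A)=W$. Granting this, the argument finishes via semisimplicity and the double commutant theorem. Let $E=\End_K(A)\otimes\Q_\ell$, with commutant $C$ in $\End(V_\ell A)$. Applying the lemma to graphs inside $V_\ell(A\times A)$ shows $\End_{\Gal}(V_\ell A)=C$.

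To prove the lemma, I will set $W_n=(W\cap T_\ell A)+\ell^nT_\ell A$ and take the images of $W_n$ in $A[\ell^n]$. These are Galois-stable finite subgroups, defined over $K$, so they give $K$-isogenies $A\to B_n\coloneq A/(\text{that image})$. The crucial finiteness input is:
\begin{center}
\emph{up to $K$-isomorphism, only finitely many abelian varieties arise as the $B_n$.}
\end{center}
For a finite field, I will prove this by fixing a polarisation of bounded degree (via Zarhin's trick, replacing $A$ by $(A\times A^\vee)^4$ to get principal polarisations). Polarised abelian varieties of fixed dimension and degree over a fixed finite field form a finite set, because they correspond to points of a moduli space of finite type over $\F_q$. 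Over a number field, this is exactly Faltings' finiteness of isogeny classes, proved with heights: the Faltings height is bounded in an isogeny class, and there are finitely many varieties of bounded height with good reduction outside a fixed set of places. By pigeonhole, infinitely many $B_n$ are isomorphic to a single $B_{n_0}$. Composing isogenies $A\to B_n\cong B_{n_0}\to A$ then gives endomorphisms $u_n$ whose images approximate $W$. By compactness of $\End_K(A)\otimes\Z_\ell$, a limit point $u$ satisfies $u(V_\ell A)=W$.

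\emph{The main obstacle.} The decisive and hardest step is the finiteness statement. A secondary nontrivial input is semisimplicity of $V_\ell(A)$ as a Galois module, which the double-commutant step needs. Tate obtains this from the same finiteness trick, applied to $u$ with $u^2=u$ on a complement. For number fields, semisimplicity is part of Faltings' work. I would invoke these as black boxes rather than reprove them.
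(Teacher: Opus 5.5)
The paper gives no proof of this theorem; it only cites Faltings and Tate. So the comparison is with the standard Tate--Faltings argument. Your skeleton follows it: the map and its injectivity, reduction to endomorphisms via $A\times B$, the image lemma via the quotients $B_n$, compactness, and a bicommutant argument.

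\textbf{The main gap: finiteness of the $B_n$.} The moduli argument over $\F_q$, like Faltings' bounded-height finiteness over number fields, only applies to abelian varieties with polarisations of bounded degree. Nothing in your sketch gives the $B_n$ such polarisations.

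Making $A$ principally polarised by passing to $(A\times A^\vee)^4$ does not help. For a polarisation $\lambda$ of $A$, the candidate $\ell^n\lambda$ descends to $A/G_n$ only if $G_n$ is isotropic for the Weil pairing attached to $\lambda$. When it does descend, its degree is $\ell^{2n(g-\dim W)}\deg\lambda$, which is bounded only when $W$ is maximal isotropic.

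The standard repair is to apply Zarhin's trick to the quotients, not to $A$:
\begin{itemize}
\item Replace $(A,W)$ by $\bigl((A\times A^\vee)^4,(W\oplus W^\perp)^4\bigr)$, where $W^\perp\subseteq V_\ell(A^\vee)$ is the annihilator of $W$ under the Weil pairing. It is Galois-stable by Theorem \ref{thm:PropertiesWeil}.
\item The quotients attached to this subspace are $(B_n\times B_n^\vee)^4$. These are principally polarisable, so finiteness of principally polarised abelian varieties of fixed dimension over $\F_q$ applies.
\item The argument then yields an element of $\End_K\bigl((A\times A^\vee)^4\bigr)\otimes\Q_\ell$ with image $(W\oplus W^\perp)^4$.
\item Composing with inclusions, projections and a fixed isogeny $A\to A^\vee$ writes $W$ as a sum of images of elements of $E=\End_K(A)\otimes\Q_\ell$.
\item Since $E$ is semisimple, such a sum is the image of a single idempotent.
\end{itemize}
Tate's original argument instead passes to a power of $A$ to arrange that $W$ is maximal isotropic for a polarisation, which then descends with bounded degree.

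Over number fields the same polarisation issue arises, and the input you quote is also misplaced. In Faltings' paper, finiteness of isomorphism classes within a whole isogeny class is deduced afterwards, with the help of the isogeny theorem, so quoting it here is circular. What is proved first, and what the argument actually needs, is that $h(B_n)$ is bounded (in fact eventually constant) along this particular sequence. That is the deep step, resting on Raynaud's results on finite flat group schemes and the Weil conjectures. It is then combined with finiteness of principally polarised abelian varieties of bounded height with good reduction outside a fixed finite set of places.

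\textbf{The bicommutant step is misstated.} $\End_{\abGal{K}}(V_\ell A)$ is not the commutant $C$ of $E$: if $\End_K(A)=\Z$ then $C=\End(V_\ell A)$. What the graph argument gives is this:
\begin{itemize}
\item For Galois-equivariant $\alpha$, the graph of $\alpha$ equals $u(V_\ell(A\times A))$ for some $u\in M_2(E)=\End_K(A\times A)\otimes\Q_\ell$.
\item For $c\in C$, the map $c\oplus c$ commutes with $M_2(E)$, so the graph is $(c\oplus c)$-stable.
\item Hence $\alpha$ commutes with $C$, so it lies in the bicommutant of $E$, which is $E$.
\end{itemize}
The semisimplicity used here is that of $E$, which follows from Theorem \ref{thm:Poincare} and survives $\otimes\Q_\ell$. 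It is not semisimplicity of $V_\ell(A)$.

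Semisimplicity of $V_\ell(A)$ need not be a black box either. The right ideal $\{u\in E: u(V_\ell A)\subseteq W\}$ is $pE$ for an idempotent $p$. The key lemma forces $p(V_\ell A)=W$, and $\ker p$ is a Galois-stable complement.

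\textbf{The composite in the approximation step runs the wrong way.} $A\to B_n\cong B_{n_0}\to A$ maps $W_n$ into $\ell^nT_\ell(A)$, so its image does not approximate $W$. You want $A\to B_{n_0}\cong B_n\xrightarrow{g_n}A$, where $g_n$ is the isogeny with $g_n\circ(A\to B_n)=[\ell^n]$. This is an honest endomorphism, and its image on $T_\ell(A)$ lies between $\ell^{n_0}(W\cap T_\ell A)$ and $W_n$. That two-sided bound is what makes a limit point have image exactly $W$.
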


\begin{remark}
Thanks to the work of many people over the years (with particularly important contributions by Zarhin), Tate's conjecture is now a theorem over any field finitely generated over its prime field. We note explicitly that Theorem \ref{thm: Tate hom conjecture} fails over $p$-adic fields. Indeed, for $\ell\neq p$, the Tate module of an abelian variety with good reduction over a $p$-adic field only sees its reduction. For example, if $E/\Q_p$ has good reduction and $\End_{\overline{\Q_p}}(E)=\Z$, the Galois action on $V_\ell(E)$ is topologically generated by a single element (the image of Frobenius), whose centraliser in $\End_{\Q_\ell}(V_\ell(E))$ has dimension at least $2$, whereas $\End_{\Q_p}(E)\otimes\Q_\ell$ has dimension $1$. The theorem also fails over $\R$ and $\C$, since the Galois groups of these fields are too small: for a non-CM elliptic curve, the Galois-equivariant endomorphism algebra has $\Q_\ell$-dimension $2$ when $K=\R$ and $4$ when $K=\C$, whereas $\End_K(E)\otimes\Q_\ell$ has dimension $1$.
\end{remark}

\begin{remark}
Theorem \ref{thm: Tate hom conjecture} is an incredibly useful and versatile result. In particular, it reduces the problem of studying maps between abelian varieties (which might seem highly non-linear objects) to that of understanding certain \emph{linear} objects. It is not unreasonable to consider this theorem as a sophisticated version of analytic uniformisation: over $\C$, an abelian variety $A$ can be represented as the quotient of the complex vector space $H^0(A, \Omega^1_A)^\vee$ by the lattice $H_1(A(\C),\Z)$, and maps between abelian varieties can be described as linear maps between $\C$-vector spaces that behave in a certain way with respect to the respective lattices. In the arithmetic context, the lattice $H_1(A(\C),\Z)$ is replaced by the $\Z_\ell$-lattice $T_\ell(A)$; compatibility with Galois action ensures that the maps in question are defined over the correct ground field.
\end{remark}

When $K$ is a finite field, $\abGal{K}$ is procyclic, generated by Frobenius, so the Galois action on $V_\ell(A), V_\ell(B)$ is completely determined by the characteristic polynomial of Frobenius (see Definition \ref{def:FrobCharPolyFF}). This leads to the following important result:
\begin{theorem}[Tate]
Let $A,B$ be abelian varieties over a finite field $K$. Let $p_A(t)$, $p_B(t)$ be the corresponding characteristic polynomials of Frobenius. Given any extension $L$ of $\Q$, write $p_A(t)=\prod_f f(t)^{a_f}$ and $p_B(t)=\prod_{f} f(t)^{b_f}$ for the factorisation of $p_A(t), p_B(t)$ in $L[t]$. Define
\[
r(p_A,p_B) = \sum_{f} \deg(f) a_f b_f.
\]
Then the following hold:
\begin{enumerate}[label={\arabic*.}, wide, labelindent=0pt]
\item $r(p_A,p_B)$ is independent of $L$;
\item $\operatorname{rank}_{\Z} \Hom_{K}(A,B) = r(p_A,p_B)$.
\end{enumerate}
\end{theorem}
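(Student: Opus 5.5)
The plan is to combine the Tate--Faltings isomorphism (Theorem \ref{thm: Tate hom conjecture}) with the semisimplicity of Frobenius acting on $V_\ell$, and then to reduce everything to a dimension count of commutants.

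\textbf{Step 1: reduction to linear algebra.} Tate's theorem gives $\Hom_K(A,B)\otimes\Q_\ell \cong \Hom_{\abGal{K}}(V_\ell A, V_\ell B)$. Since $\abGal{K}$ is procyclic and topologically generated by $\operatorname{Frob}$, and the action is continuous, the right-hand side is the space of $\Q_\ell$-linear maps $g$ with $g\circ F_A = F_B\circ g$, where $F_A, F_B$ denote the Frobenius actions. Because $\Hom_K(A,B)$ is a finitely generated free $\Z$-module (it is torsion-free, and it is finitely generated by the standard fact that it embeds into $\Hom(T_\ell A, T_\ell B)$), its rank equals $\dim_{\Q_\ell}$ of the space of intertwiners of $F_A$ and $F_B$.

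\textbf{Step 2: semisimplicity.} I expect this to be the main obstacle. I would show that $F_A$ acts semisimply on $V_\ell A$ by applying Tate's theorem with $B=A$. The algebra $\End_K(A)\otimes\Q$ is semisimple: by Poincaré reducibility (Theorem \ref{thm:Poincare}) it is a product of matrix algebras over division algebras. The Frobenius endomorphism $\pi_A$ is central in it, because every $K$-morphism commutes with the $q$-power map. Its minimal polynomial is therefore separable, since $\Q[\pi_A]$ is a commutative subalgebra of the centre, which is a product of fields. The action of $\pi_A$ on $V_\ell A$ agrees with $F_A$ (possibly up to inversion, which is harmless here). It follows that $F_A$ is annihilated by a separable polynomial over $\Q$, hence is semisimple over $\Q_\ell$, and likewise $F_B$. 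The identification of $\pi_A$ with Frobenius on $V_\ell$, together with the integrality of $p_A$ from Theorem \ref{thm:Weil}, is the part that requires care.

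\textbf{Step 3: counting.} Factor $p_A$ and $p_B$ into irreducibles $h$ over $\Q_\ell$. By semisimplicity, $V_\ell A \cong \bigoplus_h (\Q_\ell[t]/h)^{a_h}$, and similarly for $V_\ell B$. The intertwiners between $\Q_\ell[t]/h$ and $\Q_\ell[t]/h'$ vanish for $h\neq h'$, and they form the field $\Q_\ell[t]/h$, of dimension $\deg h$, when $h=h'$. Summing gives the rank as $\sum_h \deg(h)\,a_h b_h$ with $L=\Q_\ell$. This proves part 2 for $L=\Q_\ell$.

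\textbf{Step 4: independence of $L$.} To prove part 1, I would pass to $L'=\overline{L}$ and then show that $r$ does not change under field extension. Over $\overline{L}$, each irreducible $f$ over $L$ splits into $\deg f$ distinct linear factors: this holds because we are in characteristic $0$, so $f$ is separable. Distinct irreducibles over $L$ share no roots. Hence $r$ computed over $L$ equals $\sum_\alpha m_A(\alpha)\,m_B(\alpha)$, the sum over complex roots $\alpha$ of the product of their multiplicities in $p_A$ and $p_B$. This quantity is intrinsic to the polynomials. Combining it with Step 3, whose count is the same expression computed over $\overline{\Q_\ell}$, yields both claims.
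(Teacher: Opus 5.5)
Your proposal is correct and follows the route the paper indicates: Tate's theorem together with the procyclicity of $\abGal{K}$ reduces $\operatorname{rank}_{\Z}\Hom_K(A,B)$ to the dimension of the space of Frobenius intertwiners $V_\ell(A)\to V_\ell(B)$, and your Step 4 is exactly the paper's remark that $r(p_A,p_B)=\sum_\alpha m_A(\alpha)m_B(\alpha)$ over a splitting field. What you add is an explicit proof that Frobenius acts semisimply, which the paper's phrase ``completely determined by the characteristic polynomial of Frobenius'' tacitly relies on; you derive it from the centrality of $\pi_A$ in the semisimple algebra $\End_K(A)\otimes\Q$, and that argument does not actually use Tate's theorem with $B=A$, contrary to how you announce it.
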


\begin{remark}
One may take $L=\Q$ and factor $p_A, p_B$ into irreducible polynomials, or take $L$ to be a splitting field of $p_Ap_B$, in which case $r(p_A,p_B)=\sum_\alpha m_A(\alpha)m_B(\alpha)$, where $m_A(\alpha)$ and $m_B(\alpha)$ are the multiplicities of $\alpha$ as a root of $p_A$ and $p_B$, respectively. In particular, $\Hom_K(A,B)=0$ if and only if $p_A$ and $p_B$ have no common root in an algebraic closure.
\end{remark}




\section{The Mumford--Tate conjecture and independence of \texorpdfstring{$\ell$}{ell}}\label{sect: MT conjecture}
 Roughly speaking, the Mumford--Tate conjecture asserts that the images of the Galois representations attached to abelian varieties should be `large' -- more precisely, $\ell$-adically open in the $\Q_\ell$-points of a certain algebraic group -- and that this algebraic group should itself be completely determined by the Hodge classes on $A$ and its powers (the Hodge conjecture predicts that every rational Hodge class should arise from an algebraic cycle, and every algebraic cycle gives rise to a Hodge class. Since this is only an introductory discussion, we will phrase it in the more intuitive language of algebraic cycles). In its precise form, the conjecture is a little complicated to state, so we will limit ourselves to giving the following vague version:
 \begin{conjecture}[Mumford--Tate--Serre]
 Let $A$ be an abelian variety over a number field $K$. There is an algebraic subgroup $\operatorname{MT}(A)$ of $\GL_{2g,\Q}$, defined over $\Q$, with the following properties:
 \begin{enumerate}[label={\arabic*.}, wide, labelindent=0pt]
 \item $\operatorname{MT}(A)$ depends only on the \emph{geometry} of $A$, that is, it only depends on $A(\C)$ (independently of the choice of embedding $K \hookrightarrow \C$);
 \item there is a finite extension $K'$ of $K$ such that the following holds for every prime $\ell$: 
\begin{enumerate}[label={\alph*.}, wide, leftmargin=6pt, itemindent=6pt]
 \item viewing $\GL(T_\ell(A)) \cong \GL_{2g}(\Z_\ell)$ inside $\GL_{2g}(\Q_\ell)$, the group $\rho_{\ell^\infty}(\abGal{K'})$ is contained in $\operatorname{MT}(A)(\Q_\ell)$;
 \item $\rho_{\ell^\infty}(\abGal{K'})$ is open inside $\operatorname{MT}(A)(\Q_\ell)$ for the $\ell$-adic topology.
 \end{enumerate}
 \end{enumerate}
 \end{conjecture}

 Even though the full conjecture remains open, various parts of it and several special cases have been proven: there is a precise description of the group $\operatorname{MT}(A)$ (which does indeed depend only on $A(\C)$, and which by work of Deligne does not depend on the embedding of $K$ in $\C$); part (2a) has also been proven by Deligne, and for part (2b) one does at least know that $\rho_{\ell^\infty}(\abGal{K'})$ is open (in the $\ell$-adic topology) in the group of $\Q_\ell$-points of its Zariski closure. What we don't know is whether this Zariski closure is all of $\operatorname{MT}(A)_{\Q_\ell}$.
 The Mumford--Tate conjecture makes precise the following intuitions:
 \begin{enumerate}
 \item independence of $\ell$: indeed, it says that the different $G_{\ell^\infty}$ are all interpolated by a single algebraic group over $\Q$.
 \item the image of Galois is as large as it can be (up to finite index), once the geometric restrictions have been taken into account: loosely speaking, all the restrictions on $G_{\ell^\infty}$ should come from Hodge classes, and $\operatorname{MT}(A)$ is defined so as to capture the existence of all Hodge classes (hence, in particular, all algebraic cycles). Therefore part (2b) of the conjecture says that the image of Galois is -- up to some finite index -- as large as it can be, given the geometric restrictions encoded by $\operatorname{MT}(A)$.
 \end{enumerate}

\begin{example}
For the elliptic curve $E\colon y^2=x^3-x$, which has complex multiplication by $\Q(i)$, the Mumford--Tate group is the subgroup of $\GL_{2,\Q}$ given, in a suitable basis, by the matrices $\begin{pmatrix}a&-b\\b&a\end{pmatrix}$ with $a^2+b^2$ invertible. This is a two-dimensional torus, isomorphic to $\operatorname{Res}_{\Q(i)/\Q}\G_m$; over $\C$, it becomes isomorphic to $\G_m^2$. The fact that this group is smaller than the full $\GL_{2,\Q}$ reflects the fact that the Galois action must commute with the CM endomorphisms once these are defined over the ground field. Let now $A/K$ be an abelian surface such that $\End_{\overline{K}}(A)\otimes_{\Z}\Q=F$ is a real quadratic field. In this case, the Mumford-Tate group $\operatorname{MT}(A)$ is the centraliser of $F$ in $\GSp_{4,\Q}$. Over $\R$, the two embeddings $F \hookrightarrow \mathbb{R}$ give a decomposition of the $4$-dimensional first cohomology group of $A$ into two two-dimensional spaces, and $\operatorname{MT}(A)(\R)\cong\{(g_1,g_2)\in\GL_2(\R)^2:\det g_1=\det g_2\}$. The group $\operatorname{MT}(A)$ has dimension $7$, compared with dimension $11$ for $\GSp_{4, \Q}$.
\end{example}

\subsection{Kummer representations}

To conclude this introduction to the Galois representations attached to abelian varieties, we discuss more generally the case of Kummer and torsion-Kummer representations. We discuss only the most basic properties of these representations, and refer the reader to \cite{JonesRouseEndomorphisms, LombardoPeruccaReductions} for more information.

Fix a commutative connected algebraic group $A$ over a field $K$ and a rational point $P \in A(K)$. For every $N \geq 1$ we denote by $K_N$ the $N$-torsion field $K(A[N])$. We will similarly write $K_{\ell^\infty}$ for $K(A[\ell^\infty])$ and $K_\infty$ for $K(A[\infty])$. In what follows, we assume that $K$ is of characteristic $0$, or that we only work with $N$ coprime to its characteristic.

For every $N \geq 1$, we fix a point $Q_N \in A(\overline{K})$ such that $N Q_N = P$ (such a point will be called an $N$-division point of $P$). 
It is easy to see that the map
\[
\begin{array}{cccc}
\kappa_{P; N} \colon & \Gal(\overline{K}/K_N) & \to & A[N] \\
& \sigma & \mapsto & \sigma(Q_N) - Q_N
\end{array}
\]
is a well-defined homomorphism, independent of the choice of the point $Q_N$:
\begin{itemize}
    \item Well-defined: we have 
    \[
    N(\sigma(Q_N) - Q_N) = \sigma(NQ_N) - NQ_N = \sigma(P)-P=0\,,
    \]
    where the last equality uses that $P$ is rational, hence fixed by $\sigma$. This shows that $\kappa_{P; N}(\sigma)$ is indeed an $N$-torsion point.
    \item Homomorphism: we compute
    \[
    \begin{aligned}
    \kappa_{P; N}(\sigma_1 \sigma_2) &=\sigma_1\sigma_2(Q_N) - Q_N \\
    &=\sigma_1\left(\sigma_2(Q_N) - Q_N\right) + \sigma_1(Q_N) - Q_N \\
    &=\left(\sigma_2(Q_N) - Q_N\right) + \sigma_1(Q_N) - Q_N \\
    & = \kappa_{P; N}(\sigma_1) + \kappa_{P; N}(\sigma_2),
    \end{aligned}
    \]
    where the third equality uses that $\sigma_2(Q_N) - Q_N$ is an $N$-torsion point as shown above, hence fixed by $\Gal(\overline{K}/K_N)$ by definition of $K_N$.
    \item Independent of $Q_N$: if $Q'_N \in A(\overline{K})$ is another point such that $NQ'_N=P$, then $N(Q_N - Q'_N)=P-P=0$, hence $Q_N$ and $Q'_N$ differ by an $N$-torsion point $T$. Since by definition $T$ is fixed by $\Gal(\overline{K}/K_N)$, we have
    \[
    \begin{aligned}
    \sigma(Q'_N)-Q'_N & = \sigma(Q_N+T)-(Q_N+T) \\
    & = \sigma(Q_N) + \sigma(T)-(Q_N+T) \\
    & = \sigma(Q_N) + T -(Q_N+T) \\
    & = \sigma(Q_N)-Q_N.
    \end{aligned}
    \]
\end{itemize}
Just as with torsion representations, we can also pass to the limit in $N$. To see this, let $M, N$ be positive integers with $M \mid N$. As it is immediate to check, the point $\frac{N}{M} Q_N$ is an $M$-division point of $P$, hence it can be used to define the representation $\kappa_{P; M}$. It follows that
\[
\begin{aligned}
\frac{N}{M} \kappa_{P; N}(\sigma) & = \frac{N}{M} \left( \sigma(Q_N) - Q_N\right) \\
& = \sigma\Bigl( \frac{N}{M} Q_N \Bigr) - \frac{N}{M} Q_N  \\
& = \sigma\left( Q_M \right) - Q_M  \\
& = \kappa_{P; M}(\sigma),
\end{aligned}
\]
which is the compatibility condition necessary to pass to the inverse limit (over the system of positive integers ordered by divisibility) and obtain an adelic Kummer representation 
\[
\kappa_{P} \colon \Gal(\overline{K}/K_\infty) \to \widehat{T}(A),
\]
where $\widehat{T}(A)$ is the adelic Tate module of Definition \ref{def:Adelic Tate module}. In a similar way, one can also construct the $\ell^\infty$-Kummer representation
\[
\kappa_{P; \ell^\infty} \colon \Gal(\overline{K}/K_{\ell^\infty}) \to T_\ell A.
\]

Note that, for every $N \geq 1$, the image of $\kappa_{P; N}$ can be canonically identified with the Galois group of $K_N\bigl( \frac{1}{N}P \bigr)$ over $K_N$, where $K_N\bigl( \frac{1}{N}P \bigr)$ is the field generated over $K_N$ by the coordinates of all the $N$-division points of $P$. Similar comments apply to the images of $\kappa_{P; \ell^\infty}$ and $\kappa_{P}$.


We now extend the representation $\kappa_{P; N}$, which so far we have only defined on $\Gal(\overline{K}/K_N)$, to the full Galois group $\abGal{K}$. We will call the resulting map the \emph{torsion-Kummer representation}.
We describe the construction for finite values of $N$, and leave it to the reader to work out the necessary adjustments for the $\ell$-adic and adelic cases. Fix as above an $N$-division point $Q_N$ of $P$. The torsion-Kummer representation is given by
\[
\begin{array}{cccc}
\omega_{P; N} \colon & \abGal{K} & \to & A[N] \rtimes \Aut  A[N] \\
& \sigma & \mapsto & \left( \sigma Q_N- Q_N, \; \sigma|_{A[N]} \right).
\end{array}
\]
Here the target group is the semi-direct product $A[N] \rtimes \Aut  A[N]$, where an element $\varphi \in \Aut  A[N]$ acts naturally on $A[N]$.

It is not hard to check that $\omega_{P; N}$ is a group homomorphism, that its composition with the natural projection $A[N] \rtimes \Aut A[N] \to \Aut  A[N]$ is the $N$-torsion representation, and that the restriction of $\omega_{P; N}$ to $\Gal(\overline{K}/K(A[N]))$ is the previously constructed Kummer representation. Note -- unlike the representations we previously constructed, which only depended on $P$ -- the map $\omega_{P; N}$ depends on the choice of the point $Q_N$. However, the kernel of $\omega_{P; N}$ is independent of this choice, and its fixed field is the field $K(A[N], \frac{1}{N}P )$ generated by the coordinates of all the $N$-torsion points of $A$ and of all the $N$-division points of $P$. In particular, $\omega_{P; N}$ identifies $\Gal\bigl( K(A[N], \frac{1}{N}P ) / K \bigr)$ with a subgroup of $A[N] \rtimes \Aut  A[N]$. 

\begin{remark}
The constructions in this section extend to any finite collection of $K$-rational points $P_1, \ldots, P_r$. Choosing $N$-division points $Q_{i,N}$, one defines $\kappa_{P_1,\dotsc,P_r;N}(\sigma)=(\sigma(Q_{i,N})-Q_{i,N})_{i=1,\dotsc,r}$ and $\omega_{P_1,\dotsc,P_r;N}(\sigma)=((\sigma Q_{i,N}-Q_{i,N})_{i=1,\dotsc,r},\sigma|_{A[N]})$. The targets are $A[N]^r$ and $A[N]^r\rtimes\Aut A[N]$, respectively, with the diagonal action in the latter case. The $\ell$-adic and adelic constructions can be carried out in the same way.\end{remark}

\medskip

One of the most important results in Kummer theory is that the representations $\kappa_{P; N}$ have large image whenever $P$ satisfies some basic necessary conditions. We refer the reader to \cite{RibetKummer, HindryLangConj} for more general statements, and simply give an important special case.

\begin{theorem}\label{thm: Kummer theory}
    Let $K$ be a number field and $A/K$ be the product of an abelian variety and a torus. Let $P \in A(K)$ be a point such that the set $\{nP : n \in \Z\}$ is Zariski-dense in $A$. There is a constant $c=c(A/K, P)$ such that for all $N \geq 1$ the image of the Kummer representation $\kappa_{P; N} \colon \Gal(\overline{K}/K(A[N])) \to A[N]$ has index dividing $c$.
\end{theorem}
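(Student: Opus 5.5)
The plan is the classical strategy of Ribet and Bertrand: reduce to prime powers, then split the problem into a ``large $\ell$'' part and a ``bounded $\ell$'' part.

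\textbf{Step 1: reduce to prime powers.} Write $G = A(K)$ and let $\Gamma_N = \kappa_{P;N}(\Gal(\overline{K}/K_N)) \subseteq A[N]$. By Kummer duality, $\Gamma_N$ is governed by the divisibility of $P$ in $A(K_N)$. Concretely, the annihilator of $\Gamma_N$ under the pairing with $\Hom(A[N],\mu_N)$-type dual objects corresponds to elements $\xi$ with $\xi(P)$ divisible in a suitable sense over $K_N$. More elementarily, the size of $A[N]/\Gamma_N$ is controlled by the cohomology group $H^1(\Gal(K_N/K), A[N])$ together with the divisibility of $P$ in $A(K)$.

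Indeed, the inflation--restriction sequence shows the following. The kernel of the map $A(K)/NA(K) \to \Hom(\Gal(\overline{K}/K_N), A[N])$ is contained in $H^1(\Gal(K_N/K),A[N])$. Moreover, the image of $\Gal(\overline{K}/K_N)$ acting on the $N$-division points of the subgroup generated by $P$ is a $\Z[G_N]$-submodule of $A[N]$, where $G_N = \Gal(K_N/K)$.

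Using the Chinese remainder theorem, $\kappa_{P;N}$ decomposes as a product over $\ell^n \| N$. So it suffices to bound the index $[A[\ell^n]:\Gamma_{\ell^n}]$ uniformly in $n$ for each $\ell$, and to show it equals $1$ for all $\ell$ outside a finite set. A compatibility argument between different primes is also needed. This should be handled by showing that the fields $K_{\ell^\infty}(\ell^{-\infty}P)$ for distinct $\ell$ are, up to a bounded error, linearly disjoint over $K_\infty$. I would try to get this by noting that the relevant Kummer groups are pro-$\ell$ for different $\ell$.

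\textbf{Step 2: large $\ell$.} The image $\Gamma_\ell$ is a $\Z[G_\ell]$-submodule of $A[\ell]$, where $G_\ell$ is the image of Galois. For almost all $\ell$ I would establish three facts:
\begin{enumerate}
\item $H^1(G_\ell, A[\ell]) = 0$. I expect to get this from Serre's theorem on homotheties (Bogomolov), together with Sah's lemma: a central element $c \neq 1$ in the image kills $H^1$.
\item $A[\ell]$ is a semisimple $G_\ell$-module whose commutant is $\End(A)\otimes\F_\ell$. This follows from Faltings' theorem (Theorem \ref{thm: Tate hom conjecture}) and its mod-$\ell$ refinement.
\item $P \notin \ell A(K)$ in a strong sense.
\end{enumerate}
The Zariski-density of $\Z P$ implies that no nonzero $\phi \in \End(A)$ sends $P$ into $\ell A(K)$ for large $\ell$. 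Via Mordell--Weil, the image of $\Z P$ in $A(K)/\ell A(K)$, tensored with $\End(A)$, is then injective. A proper $G_\ell$-submodule of $A[\ell]$ would be annihilated by some nonzero element of the commutant. Such an element would give an endomorphism $\phi$ with $\phi(P)$ divisible by $\ell$ over $K_\ell$, hence over $K$ once $H^1$ vanishes. This is a contradiction, so $\Gamma_\ell = A[\ell]$, and then $\Gamma_{\ell^n} = A[\ell^n]$ by Nakayama.

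\textbf{Step 3: bounded $\ell$, uniformity in $n$.} Fix $\ell$. Passing to $\ell$-adic limits, I would show that the image of $\kappa_{P;\ell^\infty}$ is open in $T_\ell A$. By Sah's lemma with a homothety of the form $1+\ell^k$ in the image (Bogomolov), $H^1(G_{\ell^\infty}, A[\ell^n])$ is killed by a fixed power of $\ell$. The image of $\kappa_{P;\ell^\infty}$ is a $\Z_\ell[G_{\ell^\infty}]$-submodule of $T_\ell A$. By Faltings it has finite index unless it lies in a proper Galois-stable subspace of $V_\ell A$, whose annihilator would again produce $\phi \neq 0$ with $\phi(P)$ infinitely $\ell$-divisible up to bounded torsion. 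That contradicts Mordell--Weil and Zariski density.

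For the torus factor, the same argument works with $\Hom$ to $\G_m$ and the classical Kummer theory.

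\textbf{Main obstacle.} I expect the main obstacle to be Step 2. One needs a uniform statement for almost all $\ell$ about semisimplicity, the commutant, and the vanishing of $H^1$ for $A[\ell]$ with $A$ a product with a torus. The mixed case also requires controlling $\Ext$ between the abelian and toric parts.
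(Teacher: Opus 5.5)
The paper states this theorem without proof (it refers to Ribet and Hindry), so I am comparing your outline with the standard argument. Steps 2 and 3 use the right ingredients, but Step 1 has a genuine gap.

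\textbf{The gap in Step 1.} Take $N=\ell^nM$ with $\ell\nmid M$. After multiplying by $M$, the $\ell$-primary part of $\operatorname{im}\kappa_{P;N}$ is the Kummer image $\Gal(K_N(\ell^{-n}P)/K_N)$ \emph{over $K_N$}. It is not $\Gamma_{\ell^n}$, which is the image over $K_{\ell^n}$. The two differ by $Q=\Gal(F/K_{\ell^n})$, where $F=K_{\ell^n}(\ell^{-n}P)\cap K_N$.

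Your remark that Kummer groups for distinct primes are pro-$\ell$ for distinct $\ell$ only shows that the image over $K_N$ is the product of its $\ell$-primary parts. It says nothing about $Q$, because $\Gal(K_N/K_{\ell^n})$ embeds in $G_M=\Gal(K_M/K)$, whose order is in general divisible by $\ell$.

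A concrete example: take $A=\G_m$, $K=\Q$, $P=5$. Then $\Gamma_2=\mu_2$ and $\Gamma_5=\mu_5$. But $\sqrt5\in\Q(\zeta_5)=K_{10}$, so $\operatorname{im}\kappa_{P;10}$ has order $5$, hence index $2$ in $\mu_{10}$. Here $\Gal(\Q(\zeta_5)/\Q)\cong\Z/4\Z$ is a $2$-group even though $5\neq 2$. So the index at level $N$ is not controlled by the indices at prime-power levels. What you actually need is that $Q$ is killed by a constant independent of $n$ and $M$, and is trivial for all but finitely many $\ell$.

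\textbf{One way to close it.} $\Gal(\overline K/K)$ acts on $Q$ by conjugation, and this agrees with the natural action on $Q$ as a quotient of $\Gamma_{\ell^n}$. This action factors through $\Gal(K_{\ell^n}\cap K_M/K)$, because $\Gal(K_N/K_M)$ and $\Gal(K_N/K_{\ell^n})$ commute inside $G_{\ell^n}\times G_M$. Hence if some $\sigma$ acts on $A[\ell^n]$ as a scalar $\lambda$ and trivially on $A[M]$, then $(\lambda-1)Q=0$.

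So you need homotheties with $v_\ell(\lambda-1)$ bounded, and equal to $0$ for large $\ell$, inside the image of $\Gal(\overline K/K_M)$, \emph{uniformly in $M$}. This is an adelic form of Serre's homothety theorem, obtainable for instance from Serre's independence theorem for $(\rho_{\ell^\infty})_\ell$ over a finite extension of $K$. Bogomolov's theorem at a single $\ell$ over $K$ does not give it. Equivalently, if you rerun Steps 2--3 over $K_N$, Sah's lemma needs an element central in $\Gal(K_N/K)$, not merely in $G_{\ell^n}$, to bound $H^1(\Gal(K_N/K),A[\ell^n])$ uniformly in $M$. This cross-prime entanglement, not Step 2, is the real obstacle.

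\textbf{Smaller points.}
\begin{itemize}
\item The same issue appears within a single prime. Knowing $\Gamma_\ell=A[\ell]$ over $K_\ell$ does not give $\Gamma_{\ell^n}=A[\ell^n]$ by Nakayama alone. The image of $\Gamma_{\ell^n}$ in $A[\ell^n]/\ell A[\ell^n]\cong A[\ell]$ is the mod-$\ell$ Kummer image over the larger field $K_{\ell^n}$. This is easy to repair: run Step 2 over $K_{\ell^n}$. A homothety $\lambda\not\equiv 1\pmod\ell$ in $G_{\ell^n}$ is central, so Sah's lemma gives $H^1(G_{\ell^n},A[\ell])=0$.
\item For the torus factor, $T(K)$ is not finitely generated. ``Via Mordell--Weil'' must be replaced by working in a finitely generated saturated subgroup, such as $S$-units.
\item Since $A=B\times T$ is a product, there is no $\operatorname{Ext}$ to control. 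You do, however, need $\Hom_{G_\ell}(B[\ell],T[\ell])$ and $\Hom_{G_\ell}(T[\ell],B[\ell])$ to vanish for large $\ell$, so that the commutant of $A[\ell]$ is still $\End(A)\otimes\F_\ell$.
\end{itemize}
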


\begin{example}
    Note that, when $\dim A = 1$, the assumption of Zariski-density simply amounts to the fact that $P$ is not a torsion point of $A$. We apply the theorem in the basic case $A=\G_m$ and $P=2 \in \Q^\times = \G_m(\Q)$. Since $P$ is not a torsion point, its multiples (in this case, powers) are Zariski-dense in $\G_m$. The torsion subgroup is $\G_m[N] \cong \mu_N$, the group of $N$-th roots of unity, and an $N$-division point of $P$ is simply an $N$-th root of $2$ in $\overline{\Q}$. The Kummer map is
    \[
    \begin{array}{cccc}
         \kappa_{P; N} \colon & \Gal(\overline{\Q}/\Q(\zeta_N)) & \to & \mu_N \\
         & \sigma & \mapsto & \frac{\sigma (\sqrt[^N]{2}\, )}{\sqrt[^N]{2}},
    \end{array}
    \]
    where $\sqrt[^N]{2}$ is a fixed $N$-th root of $2$.
    The conclusion of the theorem is then that there is a constant $c$ with the following property: for every $N \geq 1$, the degree of the field $\Q(\sqrt[^N]{2}, \zeta_{N})$ over $\Q(\zeta_{N})$ is at least $\frac{1}{c} N$. This is not too hard to prove directly, but note for example that $\kappa_{P; 8}$ is not surjective: this is due to the fact that $\sqrt{2}$ is contained in $\Q(\zeta_8)$.
\end{example}

The constant $c$ in Theorem \ref{thm: Kummer theory} has only been made effective in very few cases: algebraic tori \cite{PST21, PeruccaSgobbaKummer} and elliptic curves \cite{LombardoTrontoKummer, LombardoTrontoUniformBound}.
Finally, we note that Kummer theory has been applied to great effect to different problems, see for example classical results of Hasse \cite{Has65} on the distribution of prime divisors of certain recurrence sequences, \cite{HindryLangConj} for applications to a version of Lang's conjecture in products of abelian varieties and tori, and \cite{PinkOrderReduction, MR2473894, Per15, LombardoPeruccaReductions} for the problem of the variation of the order of the reductions modulo primes of the base field of a point of infinite order.

\medskip

\textbf{Acknowledgements.} I am grateful to Michael Daas and Antonella Perucca for their careful reading of the first version of this text. This document grew out of lecture notes I wrote for the \textit{Luxembourg Summer School on Galois representations} (July 2018). Many thanks go to Enis Kaya, Andrea Maffei, and Pietro Mercuri for their extensive feedback. Thanks are also due to Bas Edixhoven for an illuminating discussion about Jacobians, and to Gabor Wiese, Antonella Perucca, Shaunak Deo, Ilker Inam and Alexander Rahm for the organisation of the summer school.

\bibliographystyle{plain}
\bibliography{biblio}

\textsc{Dipartimento di Matematica, Università di Pisa. Largo Bruno Pontecorvo 5, 56127 Pisa (Italy)}

\textit{Email address:} \texttt{davide.lombardo@unipi.it}

\end{document}